\documentclass[11pt]{article}

\usepackage[utf8]{inputenc}
\usepackage{amsfonts,amsthm}
\usepackage{amsmath}   
\usepackage{latexsym}
\usepackage{amssymb}
\usepackage{mathrsfs}
\usepackage{pifont}
\usepackage{tikz}
\usepackage[all]{xy}
\usepackage[hidelinks]{hyperref}

\newtheoremstyle{rem}{3pt}{3pt}{}{}
{\bfseries}{.}{.5em}{}

\newtheorem{theo}{Theorem}[section]
\newtheorem*{theo*}{Theorem}
\newtheorem{defi}[theo]{Definition}

\newtheorem{prop}[theo]{Proposition}

\newtheorem{coro}[theo]{Corollary}
\newtheorem{conj}[theo]{Conjecture}

\newtheorem{rema}[theo]{Remark}
\newtheorem{exam}[theo]{Example}
\theoremstyle{rem}

\newcommand{\bT}{\boldsymbol{t}}

\newcommand{\STab}{\mathrm{STab}}
\newcommand{\SSTab}{\mathrm{SSTab}}

\newcommand{\da}{\dagger}
\newcommand{\HJ}{H^J}
\newcommand{\Hmu}{H^{\mu}(S_n)}
\newcommand{\cD}{\mathcal{D}}
\newcommand{\dWJ}{W_J\backslash W/ W_J}
\newcommand{\dSmu}{S_{\mu}\backslash S_n/ S_{\mu}}
\newcommand{\Hk}{\mathrm{Hook}}
\newcommand{\tP}{\widetilde{P}}
\newcommand{\tQ}{\widetilde{Q}}
\newcommand{\tsh}{\widetilde{sh}}

\newcommand{\rmd}{r^-(\cD)}
\newcommand{\rmdp}{r^-(\cD')}
\newcommand{\rpd}{r^+(\cD)}
\newcommand{\rpdp}{r^+(\cD')}

\newcommand{\diag}[3]{ \foreach \t in {1,...,#3} {\draw[thick] (#1+\t,#2-1) rectangle (#1+\t-1,#2);} }
\newcommand{\diagg}[4]{ \foreach \t in {1,...,#3} {\draw[thick] (#1+\t,#2-1) rectangle (#1+\t-1,#2);} \foreach \t in {1,...,#4} {\draw[thick] (#1+\t,#2-1) rectangle (#1+\t-1,#2-2);} }
\newcommand{\diaggg}[5]{ \foreach \t in {1,...,#3} {\draw[thick] (#1+\t,#2-1) rectangle (#1+\t-1,#2);} \foreach \t in {1,...,#4} {\draw[thick] (#1+\t,#2-1) rectangle (#1+\t-1,#2-2);}
                         \foreach \t in {1,...,#5} {\draw[thick] (#1+\t,#2-2) rectangle (#1+\t-1,#2-3);} }

\usepackage{geometry}
\title{Kazhdan--Lusztig bases of parabolic Hecke algebras and applications to Schur--Weyl duality}

\author{J. Guilhot\footnote{J\'er\'emie Guilhot passed away on 27th July 2025 when most of the paper was already written. The second author dedicates this paper to his memory. Tribute to J\'er\'emie can be found here: https://www.idpoisson.fr/hommage-guilhot/}\ \footnote{Institut Denis Poisson, UMR CNRS 7013, Universit\'e de Tours, 37200 Tours, France}, L. Poulain d'Andecy\footnote{Laboratoire de math\'ematiques de Reims, UMR CNRS 9008, Universit\'e de Reims Champagne-Ardenne, 51100 Reims, France. \emph{email adress}: loic.poulain-dandecy@univ-reims.fr}}

\date{}

\begin{document}

\maketitle

\begin{abstract}
With an eye to applications to type A and Schur--Weyl duality, we study Kazhdan--Lusztig bases for a general parabolic Hecke algebra. Parabolic Hecke algebras are idempotent subalgebras of Hecke algebras corresponding to parabolic subgroups, and for type A they coincide with the fused Hecke algebras appearing in a generalisation of the Schur--Weyl duality with the quantum group of $GL(N)$. In this paper we investigate two different Kazhdan--Lusztig bases for parabolic Hecke algebras, together with the associated cells and the corresponding representations. We quickly specialise to type A, for which we describe the cells in terms of the RSK correspondence generalising thus the well-known description for the symmetric group. As a first application we recover the classification of irreducible representations of parabolic Hecke algebras of type A and provide a new construction of these representations. Next we turn to the Schur--Weyl duality and describe the kernel in terms of one the basis studied precedently. Moreover, we formulate some conjectures about a generator of these kernels in terms of Kazhdan--Lusztig basis elements, give some evidence and prove these conjectures in some special cases.
\end{abstract}

\section{Introduction}

Parabolic Hecke algebras were introduced in \cite{CIK71} as double-cosets algebras relative to parabolic subgroups of reductive groups over finite fields. In general, they received far less attention than their well-known particular case: the usual Iwahori--Hecke algebra (corresponding to a Borel subgroup). Parabolic Hecke algebras can also be defined generically (with a generic parameter $q$) as idempotent subalgebras of usual Hecke algebras associated to Coxeter groups. In formulas, the definition is
\[\HJ(W)=e_jH(W)e_j\,,\]
where $H(W)$ is the usual Hecke algebra associated to a Coxeter group $W$ and $e_J$ is the $q$-symmetriser corresponding to a parabolic subgroup $W_J$ of $W$. The generic parabolic Hecke algebras $\HJ(W)$ are the main objects of study in this paper.

In \cite{Cur85}, Curtis extended to the parabolic case the Lusztig's isomorphism between a group algebra and the Hecke algebra. To do so, he considered a certain basis of the parabolic Hecke algebra, made of a certain subset of Kazhdan--Lusztig elements of the usual Hecke algebra. This can be seen as one Kazhdan--Lusztig basis for the parabolic Hecke algebra. Parabolic Hecke algebras have also been studied more recently \cite{APV13}, but not from the point of view of Kazhdan--Lusztig theory (see also \cite{Gom98}). 

Our interest for parabolic Hecke algebra stems from the Schur--Weyl duality studied in \cite{CP23}. It was shown there that the parabolic Hecke algebra of type A (that was called \emph{fused Hecke algebras} in \cite{CP21,CP23}) allows to obtain the centralisers of tensor products of some representations of the quantum group $U_q(gl_N)$, namely we have a surjective morphism
\begin{equation}\label{intro_SWpar}
\pi^{J}\ :\ \HJ(S_n)\to \text{End}_{U_q(gl_N)}\bigl(S^{\mu_1}_qV\otimes\dots\otimes S^{\mu_d}_qV\bigr)\,,
\end{equation}
where the representations appearing are $q$-symmetrised powers of the vector representation $V$ of $U_q(gl_N)$. The relevant parabolic subgroup of $S_n$ corresponds to the choice of $\mu_1,\dots,\mu_d$. This generalises the usual quantum Schur--Weyl duality involving the usual Hecke algebra
\begin{equation}\label{intro_SW}
\pi\ :\ H(S_n)\to \text{End}_{U_q(gl_N)}\bigl(V^{\otimes n})\,.
\end{equation}
The surjectivity of the morphisms $\pi$ and $\pi^J$ is seen as the first fundamental theorem of invariant theory, while the second fundamental theorem would be the description of the kernel of these morphisms. It is well-known that the map $\pi$ is not injective as soon as $n>N$. Similarly, the map $\pi^J$ is not injective as soon as $d>N$ \cite{CP23} and it remains to understand its kernel.

For the usual quantum Schur--Weyl duality in (\ref{intro_SW}), the quotient of the Hecke algebra appearing is well-understood, see for example \cite{BEG20,EMTW20,GW93,Har99,Jim86, Mat99,Mur95,RSS12,Res87}. For example for $N=2$, it is the Temperley--Lieb algebra. An explicit generator of the corresponding ideal of the Hecke algebra is known, as well as a linear basis. It turns out that all this can be described entirely and quite naturally in terms of Kazhdan--Lusztig basis elements. Recall that we have two Kazhdan--Lusztig bases for a Hecke algebra
\[\{C_w\}_{w\in W}\ \ \ \ \ \ \ \text{and}\ \ \ \ \ \ \ \{C^{\da}_w\}_{w\in W}\ .\]
With our conventions, the kernel of the map $\pi$ is generated by the $q$-antisymmetriser on $N+1$ letters, which turns out to be the element $C^{\da}_{w_{N+1}}$ corresponding to the longest element of the symmetric group $S_{N+1}$ (properly embedded into $S_n$).

In contrast, outside of some particular cases \cite{CP23,Dem25,LZ10,PZ24}, the kernel of $\pi^J$ in the parabolic Schur--Weyl duality (\ref{intro_SWpar}) is not well-understood. Building on the example of the usual Hecke algebra, one could expect again the Kazhdan--Lusztig theory to be useful here, and this is one of the motivations for this work. Note however that this is not going to be as simple as in the usual case. One first reason is that the $q$-antisymmetriser $C^{\da}_{w_{N+1}}$ becomes trivial in the parabolic Hecke algebra: we have $e_JC^{\da}_{w_{N+1}}e_J=0$. So obviously the kernel has to be described differently.

A second more serious reason for the increase of the difficulty is the following. In the usual Schur--Weyl duality, roughly speaking, we basically have to forget a single irreducible representation of the Hecke algebra, the one corresponding to the one-column partition of $N+1$ boxes. This becomes different fo the parabolic Schur--Weyl duality where the kernel of $\pi^J$ in (\ref{intro_SWpar}) contains in general more than one irreducible representation, even at the first level where it is non-trivial. It is best illustrated with a simple example. If we take $d=3$ and $\mu_1=\mu_2=\mu_3=2$ in (\ref{intro_SWpar}) then here are the irreducible representations (with their dimensions) of the parabolic Hecke algebras:
\begin{center}
 \begin{tikzpicture}[scale=0.2]
\diag{-25}{-9}{6};\node at (-26,-9.5) {$1$};\diagg{-16}{-9}{5}{1};\node at (-17,-10) {$2$};\diagg{-8}{-9}{4}{2};\node at (-9,-10) {$3$};
\diagg{-1}{-9}{3}{3};\node at (-2,-10) {$1$};\diaggg{5}{-9}{4}{1}{1};\node at (4,-10.5) {$1$}; \diaggg{12}{-9}{3}{2}{1};\node at (11,-10.5) {$2$};\diaggg{18}{-9}{2}{2}{2};\node at (17,-10.5) {$1$};

\draw[thin, fill=gray,opacity=0.4] (3.5,-10.5)..controls +(0,6) and +(0,6) .. (21.5,-10.5) .. controls +(0,-6) and +(0,-6) .. (3.5,-10.5);

\end{tikzpicture}
\end{center}
For $N=2$, the kernel contains the three irreducible representations in the shaded area. Therefore, when we had a single canonical choice to obtain the ideal in the usual Schur--Weyl duality, now we are left with more freedom and no clear indication on what will be a (natural) generator of the ideal. In \cite{CP23} a conjectural generator of the ideal was given relying on diagrammatical considerations. In the present paper we will build on Kazhdan--Lusztig theory (that we need to develop a little for parabolic Hecke algebras) to try to obtain the ideal in a different way. 

To describe our strategy in a few words, we notice that the sought-for ideal, even if consisting of several representations, is made up of all those representations which are smaller in the dominance order than a certain hook shape (see the example above), and moreover this hook shape is of dimension 1. Therefore, building on the ideas of cellular algebras (or Kazhdan--Lusztig cells in our case), we look for the unique element in the cell corresponding to this hook shape and promote it as our best candidate for generating the ideal. Remarkably, we conjecture and prove in some cases that this was exactly the element found diagrammatically in \cite{CP23}.

\paragraph{Content of the paper.} We describe in more details the content of the paper. To follow the program sketched above, we need to develop a little bit a theory of Kazhdan--Lusztig cells for parabolic Hecke algebras in order to subsequently apply it to the Schur--Weyl duality.

So we start with Kazhdan--Lusztig bases, cells and representations for parabolic Hecke algebras in general, keeping in mind our goal to deal specifically with the type A, where the Kazhdan--Lusztig theory works especially well. We consider in this paper two different Kazhdan--Lusztig bases for a general parabolic Hecke algebra $\HJ(W)$:
\begin{equation}\label{intro_bases}
\{C_{\rpd}\}\ \ \ \ \ \text{and}\ \ \ \ \ \{e_JC^{\da}_{\rmd}e_J\}\,,\ \ \ \ \ \text{indexed by $\cD\in\dWJ$.}
\end{equation}
They are indexed by the double cosets of the parabolic subgroup $W_J$ in $W$, and the first one involves the maximal-length representatives $\rpd$ of such cosets, while the second one involves the minimal-length representatives $\rmd$. The first basis will certainly be considered as the natural Kazhdan--Lusztig basis for parabolic Hecke algebra and indeed it is the one appearing in \cite{Cur85}. The second one seems to be new and may appear at first to be both less practical due to the presence of the idempotent $e_J$. However, note first that the elements $\rmd$ of the Coxeter group $W$ involved are smaller than their counterparts $\rpd$. More importantly, we stress that the second basis is the only one that will be relevant in the Schur--Weyl duality context. Maybe it is a good place to emphasise that the symmetry between the two bases $\{C_w\}$ and $\{C^{\da}_w\}$ which holds in the usual Hecke algebra is broken in the parabolic setting, due to the presence of the idempotent $e_J$. In particular the two bases in (\ref{intro_bases}) behave quite differently.

As far as general theory is concerned, we show that the two bases (\ref{intro_bases}) have indeed the expected property, namely, they are uniquely characterised by the bar-invariance and a unitriangular decomposition with respect to a standard basis with coefficients having the required polynomial property.

Having those two bases, we can proceed with the usual notions of (left, right, two-sided) cells and associated cell representations for a general parabolic Hecke algebra. We prove a general result, namely that with this theory the cell modules for the parabolic Hecke algebras are the projections (with $e_J$) of the cell modules of the usual Hecke algebras. Here appears the fact that the second basis is more delicate to handle, and we need an assumption (irreducibility of the cell modules) which is going to be satisfied in type A.

At this point, we specialise in the rest of the paper to the parabolic Hecke algebra of type A. First we build on the general theory to study the cell structure. Our main results are the following: 
\begin{itemize}
\item We completely describe the cells corresponding to our two different bases. For both bases, there is a nice and clean description using two different Robinson--Schensted--Knuth (RSK) correspondences involving semistandard Young tableaux. This we see as the generalisation of the well-known description of cells for the usual symmetric group, and we see the parabolic Hecke algebra (and its Kazhdan--Lusztig theory) as the algebraic incarnation of the RSK correspondence between pairs of semistandard Young tableaux and double cosets in the symmetric group.

\item We recover the classification of \cite{CP23} of the irreducible representations in the semisimple setting. This is done from the point of view of the cell representations associated to the Kazhdan--Lusztig bases and in particular provides an alternative construction, compared to \cite{CP23}, of the representations. 

\item Using the two bases in (\ref{intro_bases}) and the RSK correspondence, we make explicit two cellular bases in the sense of Graham--Lehrer \cite{GL96}.
\end{itemize}

Finally, we turn to our initial goal, the study of the kernel of the Schur--Weyl duality in (\ref{intro_SWpar}). We work in the semisimple situation in this part. 

Our first main result is that the detailed study of cells and associated representations leads very quickly to a natural description of a linear basis of the ideal in the Schur--Weyl duality. We stress again that this is all in terms of the second basis in (\ref{intro_bases}). This could be the end of the story but we would like also to have an algebraic generator of this ideal.

The second basis provides a natural candidate for such a generator. We introduce explicitly this natural candidate and formulate two conjectures: the first one is that it does provide a generator of the ideal; the second one is that (somewhat miracuously) this generator coincides with the element introduced diagrammatically in \cite{CP23}. We provide some evidence in general and we fully prove these two conjectures in the following cases:

\begin{itemize}
\item in general for $N=2$; therefore the centraliser of any tensor product of $U_q(gl_2)$-representations is described in this way.
\item for any $N\geq 1$ when  $\mu$ is of the form $(\mu_1,1,1,\dots,1)$ (the one-boundary case).
\end{itemize}

\paragraph{Organisation.} Necessary notations and known results on the Kazhdan--Lusztig cells for the symmetric group are collected in Section \ref{sec_prel}. The general theory of parabolic Hecke algebras and its Kazhdan-Lusztig bases is developed in Sections \ref{sec_para} and \ref{sec_KLpar}. This general theory is applied to type A in Section \ref{sec-typeA}, while the applications to Schur--Weyl duality are developed in Section \ref{sec-SW}.

\paragraph{Acknowledgements.} Both authors were supported by Agence National de la Recherche Projet AHA ANR-18-CE40-0001 in the course of this investigation.

\setcounter{tocdepth}{2}
\tableofcontents

\section{Preliminaries and notations on Hecke algebras}\label{sec_prel}

Let $(W,S)$ be a Coxeter system, for which we denote by $\ell$ the length function. The left descent $\mathcal{L}(x)$ of an element $x\in W$ consists of the simple transpositions $s\in S$ such that $\ell(sx)<\ell(x)$, and similarly for the right descent $\mathcal{R}(x)$. 

We denote by $\leq$ the strong Bruhat order between elements in $W$, which means that $x\leq y$ if a reduced expression for $y$ contains a reduced expression for $x$ (see for example \cite{BB05}).

\subsection{Definition and standard basis}

We work with an indeterminate $q$. The Hecke algebra $H(W)$ is the $\mathbb{Z}[q,q^{-1}]$-algebra with basis $\{T_w\}_{w\in W}$, and with multiplication given by:
\begin{equation}\label{multH}
T_sT_w=\left\{\begin{array}{ll} T_{sw} & \text{if $\ell(sw)>\ell(w)$,}\\
(q-q^{-1})T_w+T_{sw} & \text{if $\ell(sw)<\ell(w)$.}\end{array}\right.
\end{equation} 
It is well-known that such an algebra exists, see for example \cite{GP00}. We denote $1=T_{e_W}$. As consequences of the definition, we have: 
\begin{equation}\label{presH}\begin{array}{l}T_wT_{w'}=T_{ww'}\ \ \ \ \text{if $\ell(ww')=\ell(w)+\ell(w')$,}\\[0.5em]
T_s^2=(q-q^{-1})T_s+1\ \ \text{or equivalently, }\ (T_s-q)(T_s+q^{-1})=0.\end{array}
\end{equation}
For $w=s_{i_1}\dots s_{i_r}\in W$ written as a reduced expression, we have $T_w=T_{s_{i_1}}\dots T_{s_{i_r}}$. The basis $\{T_w\}_{w\in W}$ is the \emph{standard basis} of $H(W)$.

\subsection{Kazhdan--Lusztig bases}\label{subsec_KLusual}

We recall the construction of Kazhdan--Lusztig bases from \cite{KL79}. We have the following two involutive ring automorphisms of the Hecke algebra $H(W)$ given by their action on the generators and the indeterminate $q$:
\begin{equation}\label{involutions}\overline{\phantom{\,}\cdot\phantom{\,}}\ :\ T_i\mapsto T_i^{-1}\,,\ q\mapsto q^{-1}\ \ \ \ \ \text{and}\ \ \ \ \ \cdot^{\dagger}\ :\ T_i\mapsto -T_i\,,\ q\mapsto q^{-1}\ .
\end{equation}
The first Kazhdan--Lusztig basis $\{C_w\}_{w\in W}$ is the unique basis satisfying
\begin{equation}\label{firstKL}\overline{C_w}=C_w\ \ \ \text{and}\ \ \ C_w=T_w+\sum_{x<w}p_{x,w}T_x\ \ \ \text{with $p_{x,w}\in q^{-1}\mathbb{Z}[q^{-1}]$.}
\end{equation}
The second Kazhdan--Lusztig basis $\{C^\da_w\}_{w\in W}$ is obtained by applying the involution $\cdot^{\dagger}$ and is the unique basis satisfying
\begin{equation}\label{secondKL}\overline{C^\da_w}=C^\da_w\ \ \ \text{and}\ \ \ C^\da_w=(-1)^{\ell(w)}T_w+\sum_{x<w}(-1)^{\ell(x)}\overline{p}_{x,w}T_x\ \ \ \text{with $\overline{p}_{x,w}\in q\mathbb{Z}[q]$.}
\end{equation}
It is well-known \cite{KL79,Lus03} that:
\begin{equation}\label{TC=qC}
\begin{array}{c} T_sC_w=q C_w\ \ \ \text{and}\ \ \ T_sC^\da_w=-q^{-1}C^\da_w\ \ \ \ \ \ \ \ \text{if $s\in\mathcal{L}(w)$,}\\[0.5em]
C_wT_s=q C_w\ \ \ \text{and}\ \ \ C^\da_wT_s=-q^{-1}C^\da_w\ \ \ \ \ \ \ \ \text{if $s\in\mathcal{R}(w)$.}
\end{array}\end{equation}
It follows easily from (\ref{multH}) and (\ref{TC=qC}) that:
\begin{equation}\label{psyx=qpyx}
\begin{array}{c}p_{sy,x}=qp_{y,x}\ \ \ \ \ \ \ \ \ \text{if $s\in\mathcal{L}(x)$ and $s\notin\mathcal{L}(y)$,}\\[0.5em]
p_{ys,x}=qp_{y,x}\ \ \ \ \ \ \ \ \ \text{if $s\in\mathcal{R}(x)$ and $s\notin\mathcal{R}(y)$.}\end{array}
\end{equation}

\subsection{Cells and representations}\label{subsec_cellsreps}

\paragraph{Orders and cells.} We can define orders and cells using either of the two bases $\{C_w\}$ and $\{C^\da_w\}$ of $H(W)$. The definitions are exactly similar and we write them only for the basis $\{C_w\}$. Each time we are going to use these notions, we will be careful to indicate which basis we are using.

For two elements $w,w'\in W$ and $h\in H(W)$, we denote 
\[C_w\rightarrow_{h}C_{w'}\ \ \ \ \qquad\text{if $hC_w=\sum_{u\in W}\alpha_uC_u$ with $\alpha_{w'}\neq 0$.}\]
In words, $C_w\rightarrow_{h}C_{w'}$ means that $C_{w'}$ appears with a non-zero coefficient in $hC_w$ when expanded in the basis $\{C_w\}$. Similarly we denote $C_{w'}\leftarrow_{h}C_w$ if $C_{w'}$ appears with a non-zero coefficient in $C_wh$.

The left, right and two-sided orders are defined as follows: we set 
\[w'\preceq_{\mathcal{L}} w\ \ \ \ \qquad\text{if $C_w\rightarrow_{h_1}C_{w_1}\dots\rightarrow_{h_p}C_{w'}$ for some $h_1,\dots,h_p\in H(W)$.}\]
Similarly, we denote $w'\preceq_{\mathcal{R}} w$ if there is a sequence of arrows of the form $\leftarrow_{h}$ going from $C_w$ to $C_{w'}$, and $w'\preceq_{\mathcal{LR}} w$ if there is a sequence using both types of arrows.

Let $\mathcal{X}$ stands for respectively $\mathcal{L},\mathcal{R},\mathcal{LR}$. The left, right and two-sided cells are defined as the equivalence classes in $W$ for the equivalence relations defined by $w\sim_{\mathcal{X}}w'$ if and only if $w'\preceq_{\mathcal{X}} w$ and $w\preceq_{\mathcal{X}} w'$. Finally, a strict order relation $w\prec_{\mathcal{X}}w'$ means $w\preceq_{\mathcal{X}}w'$ and $w\nsim_{\mathcal{X}}w'$.

A property relating the cells and the descents is the following \cite[Lemma 8.6]{Lus03}:
\begin{equation}\label{descents}
w\preceq_{\mathcal{L}} w'\ \Rightarrow\ \mathcal{R}(w')\subseteq\mathcal{R}(w)\ \ \ \ \text{and}\ \ \ \ \ w\preceq_{\mathcal{R}} w'\ \Rightarrow\ \mathcal{L}(w')\subseteq\mathcal{L}(w)\ .
\end{equation}
This holds whichever of the two bases $\{C_w\}$ and $\{C^\da_w\}$ we use. In particular, it follows that if $w$ and $w'$ are in the same left cell then we have $\mathcal{R}(w)=\mathcal{R}(w')$ (and similarly for right cells and left descents).

\paragraph{Ideals and representations.} For a left cell $\Gamma$ in $W$ obtained from the basis $\{C_w\}$, define the following left ideals:
\[I_{\preceq_{\mathcal{L}}\Gamma}=\bigoplus_{w\preceq_{\mathcal{L}} y}\mathbb{Z}[q,q^{-1}]C_w\ \ \ \ \text{and}\ \ \ \ I_{\prec_{\mathcal{L}}\Gamma}=\bigoplus_{w\prec_{\mathcal{L}} y}\mathbb{Z}[q,q^{-1}]C_w\ ,\]
where $y$ is an element of $\Gamma$ (the resulting ideals depend only on $\Gamma$). The associated representation $V_{\Gamma}$ of $H(W)$ is constructed from the left multiplication on the quotient $I_{\preceq_{\mathcal{L}}\Gamma}/I_{\prec_{\mathcal{L}}\Gamma}$. It has by definition the following basis:
\[\{C_w+I_{\prec_{\mathcal{L}}\Gamma}\}_{w\in\Gamma}\ .\]
For a two-sided cell $\Gamma$ corresponding to the basis $\{C_w\}$, we define the two-sided ideals
\[I_{\preceq_{\mathcal{LR}}\Gamma}=\bigoplus_{w\preceq_{\mathcal{LR}} y}\mathbb{Z}[q,q^{-1}]C_w\ \ \ \ \text{and}\ \ \ \ I_{\prec_{\mathcal{LR}}\Gamma}=\bigoplus_{w\prec_{\mathcal{LR}} y}\mathbb{Z}[q,q^{-1}]C_w\ \ \ (\text{where $y\in\Gamma$}).\]

Finally, using the other basis $\{C^{\da}_w\}$, we can construct similarly from a left cell $\Gamma$ the corresponding ideals denoted $I^{\da}_{\preceq_{\mathcal{L}}\Gamma}$ and $I^\da_{\prec_{\mathcal{L}}\Gamma}$. For a two-sided cell, we have the two-sided ideals $I^\da_{\preceq_{\mathcal{LR}}\Gamma}$ and $I^\da_{\prec_{\mathcal{LR}}\Gamma}$.

The representation of $H(W)$ associated to a left cell $\Gamma$ for the basis $\{C^{\da}_w\}$ is denoted $V^{\da}_{\Gamma}$.

\subsection{The particular case of type A}\label{subsec_preltypeA}

For finite type $A$, the cells are nicely described in terms of the Robinson--Schensted correspondence and the cell representations are very well understood. We recall these known results \cite{Gec06,KL79}.

\paragraph{Generators and relations.} When the Coxeter system is of type $A_{n-1}$, thereby corresponding to the symmetric group $S_n$ on $n$ letters, we denote the corresponding Hecke algebra by $H(S_n)$. It has generators $T_1,\dots,T_{n-1}$ and relations (\ref{presH}) read in this case:
\[\begin{array}{l}T_iT_j=T_jT_i\ \ \ \ \text{if $|i-j|>1$,}\\[0.5em]
T_iT_{i+1}T_i=T_{i+1}T_iT_{i+1}\,, \ \ \ \\[0.5em]
T_i^2=(q-q^{-1})T_i+1\ .\end{array}\]
The notation $T_i$ is a shorthand notations for $T_{(i,i+1)}$ where $(i,i+1)$ is the transposition of $S_n$ swapping $i$ and $i+1$.

\paragraph{The Robinson--Schensted correspondence for permutations in $S_n$.} The Robinson--Schensted correspondence (RS for short) is a bijection between the set of permutations in $S_n$ and the set of pairs of standard Young tableaux of size $n$ and of the same shape.

Given a partition $\lambda\vdash n$, we denote by $\STab(\lambda)$ the set of standard Young tableaux of shape $\lambda$. We denote the RS correspondence as follows:
\begin{equation}\label{RS}
\begin{array}{rcl}
S_n & \leftrightarrow & \displaystyle\bigsqcup_{\lambda\vdash n} \STab(\lambda)^2\\[1em]
w & \leftrightarrow & \bigl(P(w),Q(w)\bigr)
\end{array}
\end{equation}
where we use the same convention as in \cite{Ful97,Knu70}. Given $w\in S_n$, the partition $\lambda$ which is the shape of $P(w)$ and $Q(w)$ is denoted $sh(w)$.

To avoid ambiguity, we indicate that permutations are composed from right to left, so that $s_1s_2$ is the permutation $\left(\begin{array}{ccc} 1 & 2 & 3\\ 2 & 3 & 1\end{array}\right)$. Applying the usual insertion algorithm to the sequence $231$, we have the following example of the RS correspondence, which should be enough to illustrate our conventions:
\[s_1s_2\ \ \leftrightarrow\ \ \bigl(\begin{array}{cc}
\fbox{\scriptsize{$1$}} & \hspace{-0.35cm}\fbox{\scriptsize{$3$}} \\[-0.2em]
\fbox{\scriptsize{$2$}} &
\end{array},\begin{array}{cc}
\fbox{\scriptsize{$1$}} & \hspace{-0.35cm}\fbox{\scriptsize{$2$}} \\[-0.2em]
\fbox{\scriptsize{$3$}} &
\end{array}\bigr)\]
We see in this example an illustration of the general property that the permutation $w$ has a generator $s_i$ in its left descent (in the example, $s_1$) if and only if the tableau $P(w)$ has $i$ in its descent, which means that $i+1$ is in a lower row than $i$. Similarly, $w$ has a generator $s_i$ in its right descent (in the example, $s_2$) if and only if the tableau $Q(w)$ has $i$ in its descent.

\paragraph{Cells and RS correspondence.} The cell structure of $S_n$ turns out to be intimately related to the RS correspondence. Using either the basis $\{C_w\}_{w\in S_n}$ or the basis $\{C^{\da}_w\}_{w\in S_n}$, the cells are the same and are described as follows:
\begin{itemize}
\item $w$ and $w'$ are in the same $\left\{\begin{array}{l} \text{left}\\[0.3em] \text{right}\\[0.3em] \text{2-sided} \end{array}\right.$ cell in $S_n$ if and only if $\left\{\begin{array}{l} Q(w)=Q(w')\\[0.3em] P(w)=P(w')\\[0.3em] sh(w)=sh(w') \end{array}\right.$;
\item $w\preceq_{\mathcal{LR}}w'$ if and only if $sh(w)\leq sh(w')$;
\end{itemize}
where in the last item, we use the dominance order on partitions.

\paragraph{Cells and representations.} Let $\Gamma$ be a left cell in $S_n$. As set up in Section \ref{subsec_cellsreps}, we denote $V_{\Gamma}$  the corresponding cell representation of $H(S_n)$ when we use the basis $\{C_w\}$ and we denote $V^{\da}_{\Gamma}$ the representation obtained when we use the basis $\{C^{\da}_w\}$.

In this paragraph, we consider the semisimple situation, namely we work over the field $\mathbb{C}(q)$ or with a non-zero complex number $q$ such that $q^2$ is not a root of unity whose order is between $2$ and $n$. We denote by $\{V_{\lambda}\}_{\lambda\vdash n}$ the set of irreducible representations of the Hecke algebra $H(S_n)$, using the standard indexation by partitions of $n$. In particular the one-dimensional representation $T_{w}\mapsto q^{\ell(w)}$ corresponds to the single-line partition. 

The cell representations of $H(S_n)$ are described as follows. Let $\lambda\vdash n$ and take $\Gamma$ any left cell of $S_n$ containing elements $w$ with $sh(w)=\lambda$. The isomorphism class of the cell representation of $H(S_n)$ corresponding to $\Gamma$ depends on which basis we use. We have:
\begin{itemize}
\item The cell representation $V_{\Gamma}$ is isomorphic to $V_{\lambda^t}$.
\item The cell representation $V^{\da}_{\Gamma}$ is isomorphic to $V_{\lambda}$.
\end{itemize}
where we use $\lambda^t$ to denote the transpose of the partition $\lambda$ (the partition obtained by exchanging the lines and columns of the Young diagram of $\lambda$).

\section{Parabolic Hecke algebras}\label{sec_para}

We keep $(W,S)$ an arbitrary Coxeter system. We let $J$ be a non-empty subset of $S$ and $W_J$ the corresponding parabolic subgroup of $W$. We assume that $W_J$ is finite.

\subsection{Double cosets of parabolic subgroups}

\paragraph{Minimal-length representatives.} The following classical facts can be found in \cite[chap. 2]{GP00}. We denote $X_J$ the set of distinguished representatives for the left cosets of $W_J$ in $W$. An element $x\in X_J$ is characterised by being the unique element of minimal length in its left coset $xW_J$ (or equivalently, the unique minimal element for the Bruhat order in $xW_J$). 
It satisfies that $\ell(xu)=\ell(x)+\ell(u)$ for any $u\in W_J$, and moreover (Deodhar Lemma) we have:
\[\text{for any $s\in J$,}\ \ \ \ \text{either $sx\in X_J$},\ \  \text{or $sx=xt$ for some $t\in J$.}\]
Similarly for right cosets, the set of minimal-length representative is $X_J^{-1}$.

In each double coset in $W_J\backslash W/W_J$, there is also a unique element of minimal length (or equivalently, minimal for the Bruhat order). We denote $X_{JJ}$ the set of minimal-length representatives for the double cosets of $W_J$ in $W$. We have $X_{JJ}=X_J\cap X_J^{-1}$. 

Minimal-length representatives are characterised in terms of their descents as follows:
\[x\in X_{J}\ \ \ \ \Leftrightarrow\ \ \ \ \ \mathcal{R}(x)\cap J=\emptyset\ .\]
\[x\in X_{JJ}\ \ \ \ \Leftrightarrow\ \ \ \ \ \mathcal{L}(x)\cap J=\mathcal{R}(x)\cap J=\emptyset\ .\]

\paragraph{Maximal length representatives.} Here we use that $W_J$ is finite, and thus that left, right or double cosets are finite. The following classical facts can be found in \cite[Theorem 1.2]{Cur85} or \cite{BKP+18}. There is a unique element of maximal length in each left coset (or equivalently, a unique maximal element in the Bruhat order). We denote by $\widetilde{X}_J$ these maximal-length representatives. If we denote $w_J$ the longest element of $W_J$, then we have:
\[\widetilde{X}_J:=\{xw_J\ ,\ \ x\in X_J\}\ .\]
Similarly for right cosets, the set of maximal-length representative is $\widetilde{X}_J^{-1}$.

In each double coset in $W_J\backslash W/W_J$, there is also a unique element of maximal length (or equivalently, maximal for the Bruhat order). We denote $\widetilde{X}_{JJ}$ the set of maximal-length representatives for the double cosets of $W_J$ in $W$. We have $\widetilde{X}_{JJ}=\widetilde{X}_{J}\cap \widetilde{X}^{-1}_{J}$.

In terms of descents, maximal-length representatives are characterised as follows
\[x\in \widetilde{X}_{J}\ \ \ \ \Leftrightarrow\ \ \ \ \ J\subset \mathcal{R}(x)\ .\]
\[x\in  \widetilde{X}_{JJ}\ \ \ \ \Leftrightarrow\ \ \ \ \ J\subset \mathcal{R}(x)\cap \mathcal{L}(x)\ .\]

\paragraph{Canonical expressions.} Given a double coset $\cD\in \dWJ$, we introduce a notation for its minimal-length element, and its maximal-length element:
\[\rmd\in \cD\cap X_{JJ}\ \ \ \ \text{and}\ \ \ \ \rpd\in \cD\cap \widetilde{X}_{JJ}\ .\]
Let $x=\rmd$ for some double coset $\cD$. The subset $J\cap xJx^{-1}$ of $S$ gives rise to a parabolic subgroup $W_{J\cap xJx^{-1}}$, which is a parabolic subgroup of $W_J$. As such, there is a set of distinguished representatives for left cosets of $W_{J\cap xJx^{-1}}$ in $W_J$, that we denote $X_{J\cap xJx^{-1}}^J$. 
With these notations, we have that any element $w\in \cD$ can be written uniquely as:
\begin{equation}\label{cosets}
w=w_1xw_2\,,\ \ \ \ \text{$w_2\in W_J$ and $w_1\in X_{J\cap xJx^{-1}}^J$\ .}
\end{equation}
and in this situation, we have $\ell(w)=\ell(w_1)+\ell(x)+\ell(w_2)$.  

Furthermore, the maximal-length element $\rpd$ of $\cD$ can be written as follows:
\begin{equation}\label{exp_r+}
\rpd=w_Jw_Lxw_J\,,\ \ \ \ \ \text{where $L=J\cap xJx^{-1}$},
\end{equation}
where $w_L,w_J$ denote the longest elements of the corresponding parabolic subgroups of $W$. Finally, the double coset $\cD$ consists of the full interval in the Bruhat order:
$$\cD=[\rmd,\rpd]\,,$$ 
that is, the double coset $\cD$ coincides with the set of all elements $y\in W$ such that $\rmd\leq y\leq \rpd$.

\begin{exam}
Take $W=S_6$ the symmetric group generated by $s_1,\dots,s_5$ (see Section \ref{sec-typeA} for the notations) and $W_J=S_2\times S_2\times S_2$ corresponding to $J=\{s_1,s_3,s_5\}$. We have $w_J=s_1s_3s_5$.

The element $x=s_2s_1s_4s_3s_2$ is minimal in its double coset. Here we have $J\cap xJx^{-1}=\{1\}$ (because $xs_3=s_1x$). The corresponding maximal element is $y=s_3s_5.x.s_1s_3s_5$.
\end{exam}

\paragraph{Bruhat order for double cosets.} Let $\cD$ and $\cD'$ be two double cosets in $\dWJ$. The Bruhat order on $W$ extends naturally to the double cosets $\dWJ$, through their distinguished representatives. That is, we set:
\[\cD\leq \cD'\ \ \ \ \Leftrightarrow\ \ \ \ \rmd\leq \rmdp\ .\]
In our situation ($W_J$ finite) we have another set of distinguished representatives, the maximal-length representatives, and they seem to provide an alternative choice for extending the Bruhat order to $\dWJ$. We will use the following result, asserting that the two choices are equivalent.
\begin{prop}\label{prop-order}
Let $\cD,\cD'\in\dWJ$. We have:
\[\rmd\leq \rmdp\ \ \ \ \ \Leftrightarrow\ \ \ \ \ \ \rpd\leq \rpdp\ .\]
\end{prop}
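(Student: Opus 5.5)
The plan is to rephrase both sides of the equivalence as statements about whole double cosets, using the fact recalled just above that $\cD=[\rmd,\rpd]$ is a Bruhat interval. The key intermediate claim is:
\[\rmd\leq\rmdp\ \Longleftrightarrow\ \text{every } y\in\cD \text{ lies below some } y'\in\cD' \ \Longleftrightarrow\ \rpd\leq\rpdp .\]

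The tool is the standard compatibility of the Bruhat order with parabolic projections (Deodhar). If $u\leq v$ in $W$, then the minimal-length representatives of $uW_J$ and $vW_J$ satisfy the same inequality, and so do the maximal-length ones, and similarly for right cosets. I would prove the maximal version with the subword property. Write $v=\tilde v\, t$ with $\tilde v$ maximal in $vW_J$ and $t\in W_J$, so that $\ell(\tilde v)=\ell(v)+\ell(t^{-1})$. Then $u\leq v\leq \tilde v$. Moreover, since $J\subset\mathcal{R}(\tilde v)$, the lifting property gives $us\leq\tilde v$ for every $s\in J$. Iterating this, every element of $uW_J$ lies below $\tilde v$, and in particular the maximal element $\tilde u$ does. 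The minimal version is the classical one in \cite[chap.~2]{GP00}. Applying the left-coset version and then the right-coset version, and using $\widetilde{X}_{JJ}=\widetilde{X}_J\cap\widetilde{X}_J^{-1}$, we obtain: if $u\leq v$ then $r^{\pm}$ of the double coset of $u$ is at most $r^{\pm}$ of the double coset of $v$.

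With this in hand, both implications are short. For ($\Rightarrow$), apply the maximal double-coset projection to $\rmd\leq\rmdp$. Since $\rmd\in\cD$ and $\rmdp\in\cD'$, this gives $\rpd\leq\rpdp$. For ($\Leftarrow$), apply the minimal projection to $\rpd\leq\rpdp$, which gives $\rmd\leq\rmdp$.

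The main obstacle is the lifting step for maximal representatives: one has to check that for $s\in J$ with $us>u$ we still get $us\leq\tilde v$. This is the lifting property in the case $\tilde v s<\tilde v$. There is also a subtlety when passing from the left projection to the right one. The maximal element of the right coset $W_J\tilde u$ must remain maximal on the left, i.e.\ the result must lie in $\widetilde{X}_{JJ}$. I would handle this by the descent characterisations given above, or else directly through the expression $\rpd=w_Jw_Lxw_J$ from (\ref{exp_r+}).
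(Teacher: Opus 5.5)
Your proof is correct, but it is organised differently from the paper's.

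\textbf{How the paper argues.} For ($\Rightarrow$), it first asserts $\rmd w_J<\rmdp w_J$, justified only by the remark that both are distinguished left representatives. It then climbs from $\rmd w_J$ to $\rpd=s_1\cdots s_k\,\rmd w_J$ one letter $s_i\in J$ at a time, using the lifting property at each step to keep an upper bound inside $\cD'$. For ($\Leftarrow$), it argues by contradiction with a rather informal subword argument: if $\rmd\not\leq\rmdp$ but $\rmd\leq\rpdp=s_1\cdots s_k\,\rmdp\,t_1\cdots t_l$, then $\rmd$ would have a reduced expression starting or ending in $J$. The case $\rmdp<\rmd$ is implicitly excluded by ($\Rightarrow$).

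\textbf{How you argue, and what each route buys.} You reduce both directions to the monotonicity of the two projections $W\to X_{JJ}$ and $W\to\widetilde{X}_{JJ}$. Your lifting lemma ($z\leq\tilde v$ and $s\in J\subseteq\mathcal{R}(\tilde v)$ imply $zs\leq\tilde v$) does in one stroke everything the paper does for ($\Rightarrow$). This includes the inequality $\rmd w_J\leq\rmdp w_J$, which the paper leaves unproved. The paper's contradiction argument for ($\Leftarrow$), once made precise, is exactly the classical statement $u\leq v\Rightarrow u^J\leq v^J$ that you quote. The standard reference is Bj\"orner--Brenti, Prop.~2.5.1, and it also follows from the same lifting property by induction on $\ell(v)-\ell(v^J)$. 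So your route is more symmetric and more rigorous. The price is citing Deodhar's result instead of working directly with the reduced expressions (\ref{exp_r+}).

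\textbf{The subtlety you flag.} It is a one-line check. Suppose $J\subseteq\mathcal{R}(\tilde u)$ and $z=a\tilde u$ is the maximum of $W_J\tilde u$, with $\ell(z)=\ell(a)+\ell(\tilde u)$. Then $\ell(zs)\leq\ell(a)+\ell(\tilde us)<\ell(z)$ for all $s\in J$. Hence $z\in\widetilde{X}_{JJ}$ is the maximal element of $W_JuW_J$, and the same computation works for minimal representatives. Alternatively, run your lifting argument on both sides at once against the maximal element of the double coset of $v$, whose left and right descents both contain $J$. Finally, your ``key intermediate claim'' is true but never used.
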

\begin{proof}
Let $\cD$ and $\cD'$ be distinct cosets. First let $w\in\cD$ and $w'\in\cD'$ such that $w< w'$. Take $s\in J\backslash \mathcal{L}(w)$. We have that:
\[\exists w''\in\cD'\ \ \ \text{such that}\ \ sw<w''\ .\]
Indeed, if $s\notin \mathcal{L}(w')$ then we can take $w''=sw'$. Whereas if $s\in \mathcal{L}(w')$ then the lifting property of the Bruhat order \cite[Prop. 2.2.7]{BB05} ensures that $sw\leq w'$. The strict inequality follows since $sw$ and $w'$ belong respectively to $\cD$ and $\cD'$ which are distinct.

Now let $x=\rmd$ and $y=\rmdp$ and assume that $x<y$. We have $xw_J<yw_J$ since $x$ and $y$ are in particular distinguished left representative. From (\ref{exp_r+}), a reduced expression for $\rpd$ is $\rpd=s_1\dots s_kxw_J$ for some $s_1,\dots,s_k$ in $J$. Since the expression is reduced, $s_i$ is not in the left descent of $s_{i+1}\dots s_kxw_J$. So we apply the above observation $k$ times and get an element $y'\in\cD'$ such that $\rpd<y'$. Since $y'\leq \rpdp$, we conclude that $\rpd\leq \rpdp$.

Finally assume that $x$ and $y$ are not comparable, and assume that $\rpd<\rpdp$. This implies by transitivity that $x<\rpdp$. Again by (\ref{exp_r+}) there is a reduced expression for $\rpdp$ of the form 
\[\rpdp=s_1\dots s_kyt_1\dots,t_l\,,\ \ \ \ \ \text{for some $s_1,\dots,s_k,t_1,\dots,t_l$ in $J$.}\] 
So a reduced expression of $x$ must be a subexpression of this, while not a subexpression of $y$. This implies that $x$ must have a reduced expression with some elements of $J$ on the left or on the right. This contradicts the fact that $x$ is a minimal-length representative.
\end{proof}

\paragraph{Poincar\'e polynomials.} The Poincar\'e polynomial of a finite Coxeter group $W'$ is:
\begin{equation}\label{Poincare-poly}
W'(q^2)=\sum_{w\in W'}q^{2\ell(w)}\ .
\end{equation}
Now take a parabolic subgroup $W_K$ of the finite Coxeter group $W_J$. Denote $X_J^K$ the set of minimal-length representatives of left cosets of $W_K$ in $W_J$. We have that any $x\in W_J$ is uniquely written as $x=du$ where $d\in X_J^K$ and $u\in W_K$, and such that $\ell(x)=\ell(d)+\ell(u)$. Therefore, we find:
\begin{equation}\label{div-poinc-pol}W_J(q^2)=W_K(q^2)\sum_{x\in X_K^J}q^{2\ell(x)}\ .
\end{equation}
So $W_K(q^2)$ divides $W_J(q^2)$ for any subset $K\subset J$ (in particular, $(1+q^2)$ always divides $W_J(q^2)$).

\subsection{Definition of parabolic Hecke algebras and standard basis}

From now on, we extend the algebra $H(W)$ over the following localization
\begin{equation}\label{def-ringA}
A:=\mathbb{Z}[q,q^{-1},W_J(q^2)^{-1}]\ ,
\end{equation}
where $W_J(q^2)$ is the Poincar\'e polynomial given in (\ref{Poincare-poly}).

\paragraph{Definition of the parabolic Hecke algebra.} We define:
\[\textbf{1}_J:=\sum_{w\in W_J}q^{\ell(w)}T_w\ .\]
The element $\textbf{1}_J$ is a quasi-idempotent, and, over the base ring $A$, we renormalise it to get an idempotent:
\begin{equation}\label{basic-prop-P}
e_{J}=\frac{1}{W_J(q^2)}\textbf{1}_J=\frac{1}{W_J(q^2)}\sum_{w\in W_J}q^{\ell(w)}T_w\ .
\end{equation}
This is sometimes called the $q$-symmetriser associated to the subalgebra of $H(W)$ generated by $T_s$ with $s\in J$, and with basis $\{T_w\}_{w\in W_J}$. The main property of $e_J$, implying that $e_J^2=e_J$, is:
\[T_we_J=e_JT_w=q^{\ell(w)}e_J\ \ \ \ \text{for all $w\in W_J$.}\]
\begin{defi}
The parabolic Hecke algebra $\HJ(W)$ is the algebra over $A$ defined by:
\[\HJ(W)=e_JH(W)e_J\ .\]
\end{defi}

\begin{rema}
More generally, one can define, as in \cite{APV13}, the parabolic Hecke algebra as the algebra  $\textbf{1}_JH(W)\textbf{1}_J$ directly over $\mathbb{Z}[q,q^{-1}]$. This algebra is not unital if $W_J(q^2)$ is not invertible. Over the extended base ring $A$, the two definitions coincide, and the algebra $\HJ(W)$ is unital with unit $e_J$.
\end{rema}

The idempotent $e_J$ is almost a Kazhdan--Lusztig basis element. Indeed, let $w_J$ be the longest element of the parabolic subgroup $W_J$. Then we have
\begin{equation}\label{relC_P}
C_{w_J}=\sum_{w\in W_J}q^{\ell(w)-\ell(w_J)}T_w=q^{-\ell(w_J)}\textbf{1}_J=q^{-\ell(w_J)}W_J(q^2)e_J\ .
\end{equation}
Note that Formula (\ref{relC_P}) shows in particular that the idempotent $e_J$ is bar invariant:
\begin{equation}\label{barPJ}
\overline{e_J}=e_J\ ,
\end{equation}
where the bar involution was defined in (\ref{involutions}).

\paragraph{Standard basis of $\HJ(W)$.} For a double coset $\cD\in\dWJ$, define:
\[
T_{\cD}:=\sum_{w\in \cD}q^{\ell(w)-\ell(\rpd)}T_w\ .
\]
The substraction of $\ell(\rpd)$ is a normalisation choice, and is such that the coefficient of the longest element $T_{\rpd}$ in $T_{\cD}$ is equal to 1 (note that all other coefficients are negative powers of $q$). 

The standard basis of $\HJ(W)$ is \cite{APV13,Cur85,CIK71}:
\begin{equation}\label{TBasis-par}
\{T_{\cD}\}_{\cD\in\dWJ}\ .
\end{equation}
It might not be immediately clear that the elements $T_D$ belong to the algebra $\HJ(W)$. In fact, if we denote $x=\rmd$ the minimal-length representative of $\cD$, from (\ref{cosets}) we deduce (see \cite{APV13}) that:
\begin{equation}\label{TD-PTxP}
T_{\cD}=\frac{W_J(q^2)^2}{W_{J\cap xJx^{-1}}(q^2)}q^{\ell(\rmd)-\ell(\rpd)}e_JT_xe_J\ .
\end{equation}
As recalled in (\ref{div-poinc-pol}), $W_{J\cap xJx^{-1}}(q^2)$ divides $W_J(q^2)$, and moreover, in the ring $A$, $W_J(q^2)$ and all its factors are invertible. So over the ring $A$, renormalising the basis elements above, we also have bases of $\HJ(W)$ of the form:
\begin{equation}\label{TBasis-par2}\{e_JT_we_J\}_{w\in R_J} \ \ \ \ \text{for any set $R_J$ of representatives of $\dWJ$.}
\end{equation}
Indeed we have that
$e_JT_we_J=q^{\ell(w)-\ell(x)}e_JT_{x}e_J$ whenever $w\in W_JxW_J$. This follows from the property (\ref{cosets}) of double cosets and from the basic property (\ref{basic-prop-P}) of $e_J$. Two natural choices for $R_J$ are of course $R_J=X_{JJ}$ and $R_J=\widetilde{X}_{JJ}$, the set of, respectively, minimal-length and maximal-length representatives.

\section{Kazhdan--Lusztig bases for parabolic Hecke algebras}\label{sec_KLpar}

\subsection{A first Kazhdan--Lusztig basis for $\HJ(W)$ and its cells}\label{subsec_KLbasispar1}

\subsubsection{The basis}
Let $\cD$ a double coset in $\dWJ$. Recall that $\rpd$ is the unique element of maximal-length in $\cD$. Since any $s\in J$  is in the left descent and in the right descent of $\rpd$, from property (\ref{TC=qC}), we have immediately that:
\begin{equation}\label{PCP=C}
e_JC_{\rpd}e_J=C_{\rpd}\ ,
\end{equation}
so that the elements $C_{\rpd}$ belong to $\HJ(W)$. The next result shows that the above set of elements forms a basis and compares it with the standard basis $\{T_{\cD}\}$ of $\HJ(W)$ defined in (\ref{TBasis-par}).
\begin{prop}\label{prop-CT-par}
The set $\{C_{\rpd}\}_{\cD\in \dWJ}$ is a basis of $\HJ(W)$, and we have:
\begin{equation}\label{CvsT-par}
C_{\rpd}=\sum_{\cD'\leq\cD}p_{\rpdp,\rpd}T_{\cD'}\ .
\end{equation}
Moreover, $C_{\rpd}$ is the unique element $B_{\cD}$ of $\HJ(W)$ satisfying
\[\overline{B_{\cD}}=B_{\cD}\ \ \ \text{and}\ \ \ B_{\cD}=T_\cD+\sum_{\cD'<\cD}a_{\cD'\cD}T_\cD'\ \ \text{with $a_{\cD'\cD}\in q^{-1}\mathbb{Z}[q^{-1}]$.}\]
\end{prop}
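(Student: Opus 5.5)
Since $e_JC_{\rpd}e_J=C_{\rpd}$ by (\ref{PCP=C}), we have $C_{\rpd}\in\HJ(W)$. To get the expansion (\ref{CvsT-par}), I would write $C_{\rpd}=\sum_{y\leq \rpd}p_{y,\rpd}T_y$ (with $p_{\rpd,\rpd}=1$) and show that on each double coset $\cD'$ the coefficients $p_{y,\rpd}$ for $y\in\cD'$ are proportional to $q^{\ell(y)}$. The tool is (\ref{psyx=qpyx}): every $s\in J$ lies in both descents of $x=\rpd$. So if $y\in\cD'$ and $s\in J\setminus\mathcal{L}(y)$, then $p_{sy,x}=qp_{y,x}$, and similarly on the right. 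Any element of $\cD'$ is reached from $\rmdp$ by successive length-increasing multiplications by elements of $J$ on the left and right, using the decomposition (\ref{cosets}). Hence $p_{y,x}=q^{\ell(y)-\ell(\rpdp)}p_{\rpdp,x}$ for all $y\in\cD'$. Here $p_{y,x}$ means $0$ when $y\not\leq x$, and the recursion is consistent with this because both sides vanish together. Summing over $y\in\cD'$ gives exactly $p_{\rpdp,\rpd}T_{\cD'}$.

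Next comes the range of the sum. The coefficient is nonzero only if $\rpdp\leq\rpd$, which by Proposition \ref{prop-order} is equivalent to $\cD'\leq\cD$. The diagonal coefficient is $p_{\rpd,\rpd}=1$, and for $\cD'<\cD$ we have $p_{\rpdp,\rpd}\in q^{-1}\mathbb{Z}[q^{-1}]$. So the matrix expressing $\{C_{\rpd}\}$ in terms of $\{T_\cD\}$ is unitriangular with respect to the Bruhat order on double cosets. Since $\{T_\cD\}$ is a basis (\ref{TBasis-par}), $\{C_{\rpd}\}$ is a basis too. Bar invariance is inherited from $\overline{C_w}=C_w$. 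Together these show that $B_\cD=C_{\rpd}$ satisfies the required properties.

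For uniqueness, suppose $B_\cD$ and $B'_\cD$ both satisfy the conditions. Their difference $h=\sum_{\cD'<\cD}c_{\cD'}T_{\cD'}$ is bar invariant with all $c_{\cD'}\in q^{-1}\mathbb{Z}[q^{-1}]$. Expand $h$ in the $T_w$ basis of $H(W)$. Each $T_{\cD'}$ contributes $q^{\ell(w)-\ell(\rpdp)}c_{\cD'}$ to $T_w$, which is still in $q^{-1}\mathbb{Z}[q^{-1}]$ because $\ell(w)\leq\ell(\rpdp)$. So $h$ is a bar-invariant element of $H(W)$ whose $T$-coefficients all lie in $q^{-1}\mathbb{Z}[q^{-1}]$. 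Writing $h$ in the $C_w$ basis by the usual triangularity argument, take a maximal $w$ appearing in $h$. The coefficient of $C_w$ must be bar invariant and lie in $q^{-1}\mathbb{Z}[q^{-1}]$, hence it is $0$. Inducting on maximal elements gives $h=0$.

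One subtlety is that the ring is now $A$ rather than $\mathbb{Z}[q,q^{-1}]$. However, the $c_{\cD'}$ are assumed to lie in $\mathbb{Z}[q^{-1}]$, so the argument stays within $\mathbb{Z}[q,q^{-1}]$ and is unaffected. The main obstacle I expect is the first step: checking carefully, via (\ref{psyx=qpyx}) and the coset structure (\ref{cosets}), that the coefficients on $\cD'$ propagate consistently to the single factor $p_{\rpdp,\rpd}$, including the case $\rpdp\not\leq\rpd$.
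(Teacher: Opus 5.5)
Your proposal is correct and follows essentially the same route as the paper: you propagate $p_{y,\rpd}=q^{\ell(y)-\ell(\rpdp)}p_{\rpdp,\rpd}$ across each double coset via (\ref{psyx=qpyx}) and (\ref{cosets}), restrict the sum with Proposition~\ref{prop-order}, use unitriangularity for the basis claim, and prove uniqueness from the fact that a bar-invariant element whose $T_w$-coefficients all lie in $q^{-1}\mathbb{Z}[q^{-1}]$ is zero. You also spell out two points the paper leaves implicit or cites: that the $T_w$-coefficients of the difference stay in $q^{-1}\mathbb{Z}[q^{-1}]$ because $\ell(w)\leq\ell(\rpdp)$, and a proof of that vanishing lemma via the $C_w$-basis; and the case $\rpdp\not\leq\rpd$ you worry about is harmless, since (\ref{psyx=qpyx}) comes from comparing $T$-coefficients in $T_sC_x=qC_x$ and therefore holds with the zero convention.
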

\begin{proof}
The fact that $\{C_{\rpd}\}_{\cD\in \dWJ}$ forms a basis of $\HJ(W)$ can be found in \cite[Theo. 1.10]{Cur85}. It follows from (\ref{CvsT-par}) which can be checked by a short direct calculation that we provide here. 

Let $x=\rpd$ and let $\cD'\in\dWJ$. Recall from (\ref{cosets}) that any element $y$ of $\cD'$ can be written as a reduced expression of the form:
\[y=s_1\dots s_k\rmd t_1\dots t_l\,,\ \ \ \ \text{with $s_i,t_i\in J\ .$}\]
Moreover any $s_i,t_i\in J$ are in the left and right descents of $x$ (maximal-length representative). Therefore we can use the property (\ref{psyx=qpyx}) of Kazhdan--Lusztig polynomials and obtain:
\begin{equation}\label{KLpol-par}
p_{y,x}=q^{\ell(y)-\ell(\rmdp)}p_{\rmdp,x}=q^{\ell(y)-\ell(\rpdp)}p_{\rpdp,x}\ \ \ \ \ \text{for any $y\in\cD'$.}
\end{equation}
The last equality uses $p_{\rpdp,x}=q^{\ell(\rpdp)-\ell(\rmdp)}p_{\rmdp,x}$, which is simply the first equality for $y=\rpdp$. Now we can calculate as follows:
\[\begin{aligned}
C_x &= \sum_{y\leq x} p_{y,x} T_y\\
&=  \sum_{\cD'\in\dWJ}\sum_{y\in\cD'} p_{y,x} T_y\\
&= \sum_{\cD'\in\dWJ}\Bigl( p_{\rpdp,x}\sum_{y\in\cD'}q^{\ell(y)-\ell(\rpdp)}T_y\Bigr)\\
&= \sum_{\cD'\in\dWJ} p_{\rpdp,x}T_{\cD'}\ .
\end{aligned}\]
In the last sum above, if $T_{\cD'}$ appears with a non-zero coefficient, this means that $T_{\rpdp}$ appeared with a non-zero coefficient in $C_x$, and this means that $\rpdp\leq \rpd$. According to Proposition \ref{prop-order}, this is equivalent to $\cD'\leq \cD$. The formula in the proposition expresses then a unitriangular change of basis, since the coefficient in front of $T_\cD$ is obviously $1$.

The element $C_{\rpd}$ is indeed stable under the bar involution, by property of the Kazhdan--Lusztig basis of $H(W)$. It is immediate that the first coefficient in the decomposition is 1 while the others are in $q^{-1}\mathbb{Z}[q^{-1}]$, from the similar properties of the polynomials $p_{x,y}$. It remains to prove the unicity statement. It is easily checked \cite[Theorem 5.2]{Lus03} that for an element $h=\sum_{w\in W}a_wT_w$ in the Hecke algebra with coefficients $a_w\in q^{-1}\mathbb{Z}[q^{-1}]$, we have that if $\overline{h}=h$ then $h=0$. If there is another element $X_{\cD}$ satisfying the required properties, then $X_{\cD}-C_{\rpd}$ is such an element $h$ and therefore is $0$.
\end{proof}

\begin{exam}\label{exam-basis1}
Take $W=S_4$ the symmetric group generated by $s_1,s_2,s_3$ and $W_J=S_2\times S_2$ corresponding to $J=\{s_1,s_3\}$. There are three double cosets: $[13]$, $[12321]$, $[121321]$, whose names reflect how their longest representatives write in terms of the generators $s_1,s_2,s_3$. The formulas illustrating the proposition are:
\[\begin{array}{l}
C_{13}=T_{[13]}\,,\\[0.5em]
C_{12321}=T_{[12321]}+(q^{-1}+q^{-3})T_{[13]}\,,\\[0.5em]
C_{121321}=T_{[121321]}+q^{-1}T_{[12321]}+q^{-4}T_{[13]}\ .
\end{array}\]
For example, $q^{-1}+q^{-3}$ is the Kazhdan--Lusztig polynomial $p_{s_1s_3,s_1s_2s_3s_2s_1}$ giving the coefficients of $T_{s_1s_3}$ in the expansion of $C_{s_1s_2s_3s_2s_1}$.
\end{exam}

\subsubsection{Cells in $\dWJ$ and representations of $\HJ(W)$} 

In this subsection, the orders, the cells and the associated representations of $H(W)$ are those constructed from the basis $\{C_{w}\}$ of $H(W)$. The orders and the cells on $\dWJ$ as well as the associated representations of $\HJ(W)$ are defined similarly, using the basis $\{C_{\rpd}\}$ of $\HJ(W)$ just obtained.

\begin{prop}\label{prop-cell1}$\ $\\
$\bullet$ Let $\mathcal{X}$ stands for $\mathcal{L}$ or $\mathcal{R}$ or $\mathcal{LR}$. We have
$$\text{$\cD\preceq_{\mathcal{X}} \cD'$ in $\dWJ$\ \ \ $\Longleftrightarrow$\ \ \ $\rpd\preceq_{\mathcal{X}}  \rpdp$ in $W$.}$$
$\bullet$ In particular, the left cells in $\dWJ$ (and similarly for right and double-sided cells) are the non-empty sets of the form:
\[\Gamma\cap \widetilde{X}_{JJ}\,,\ \ \ \ \ \ \text{for a left cell $\Gamma$ in $W$,}\]
where we have identified double cosets in $\dWJ$ with their maximal-length representatives in $\widetilde{X}_{JJ}$.
\end{prop}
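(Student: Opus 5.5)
We prove both implications by comparing the elementary arrows in $\HJ(W)$ with those in $H(W)$; the statement about cells then follows at once. The basic observation is that for $h\in H(W)$ and a double coset $\cD$, by (\ref{PCP=C}) we have
\[
e_Jh e_J\,C_{\rpd}=e_J\,(hC_{\rpd})\,e_J .
\]
We need to know how $e_J$ acts on the Kazhdan--Lusztig basis. Write $hC_{\rpd}=\sum_{u}\alpha_uC_u$. For $u\in W$, we claim that $e_JC_ue_J$ is either zero or a nonzero multiple of $C_{r^+(W_JuW_J)}$, and that it is zero unless $u$ already has all of $J$ in its left and right descents, that is, unless $u\in\widetilde{X}_{JJ}$.

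The claim is best handled through the left ideal structure. Since $C_{\rpd}$ has $J\subseteq\mathcal{L}(\rpd)$, (\ref{TC=qC}) gives $T_sC_{\rpd}=qC_{\rpd}$ for $s\in J$. The left ideal $I_{\preceq_{\mathcal L}}$ generated by $C_{\rpd}$ is spanned by the $C_u$ with $u\preceq_{\mathcal L}\rpd$, and by (\ref{descents}) all such $u$ satisfy $J\subseteq\mathcal{R}(u)$. Hence $C_ue_J=C_u$ for these $u$, so the right projection is harmless. The real issue is left multiplication by $e_J$.

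Write $e_J=c\,C_{w_J}$ with $c$ invertible, using (\ref{relC_P}). The expansion of $C_{w_J}C_u$ in the $C$-basis involves only $v\preceq_{\mathcal L}u$. When $J\subseteq\mathcal{L}(u)$, it equals $q^{\ell(w_J)}\,\cdot$ (Poincar\'e factor)$\,C_u$. In general I would prove that $e_JC_u\in\bigoplus_{v}A\,C_v$ with $v$ ranging over elements $v\preceq_{\mathcal L}u$ and $v\preceq_{\mathcal R}u$ having $J\subseteq\mathcal L(v)$. The tool is induction on the generators: $C_sC_u=C_{su}+\sum\mu\,C_z$ for $s\notin\mathcal L(u)$, together with the fact that $e_J$ is a product-like expression in the $C_s$, $s\in J$.

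With that in hand, the forward direction ($\cD\preceq\cD'$ in $\dWJ$ implies $\rpd\preceq\rpdp$ in $W$) works as follows. An arrow $C_{\rpdp}\to_{e_Jhe_J}C_{\rpd}$ in $\HJ(W)$ means $C_{\rpd}$ appears in $e_J(hC_{\rpdp})$. This forces some $C_u$ in $hC_{\rpdp}$ with $\rpd\preceq_{\mathcal L}u\preceq_{\mathcal L}\rpdp$, so we get an arrow chain in $W$. Chains compose, and the right and two-sided cases are symmetric.

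The converse is the main obstacle. An arrow $C_{\rpdp}\to_h C_{\rpd}$ in $W$, possibly passing through intermediate $C_w$ with $w\notin\widetilde X_{JJ}$, must be replaced by arrows inside $\HJ(W)$. The first thing to try is the following: since $J\subseteq\mathcal L(\rpd)$, the coefficient of $C_{\rpd}$ in $hC_{\rpdp}$ equals, up to an invertible scalar, its coefficient in $e_JhC_{\rpdp}=e_Jhe_JC_{\rpdp}$. To see this, use the identity
\[
e_JC_{\rpd}=C_{\rpd},
\]
together with the fact that left multiplication by $e_J$ preserves the span of the $C_v$ with $J\subseteq\mathcal L(v)$ and is the identity there. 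We would need to show that $e_J$ kills the $C_v$-components with $J\not\subseteq\mathcal L(v)$ modulo contributions not involving $C_{\rpd}$. Positivity is not available for general $W$, so instead I would use the adjointness of $e_J$ with respect to the standard symmetric form $\tau(T_xT_{y^{-1}})=\delta_{xy}$ and its dual basis. This gives a single arrow in $\HJ(W)$ directly from a single arrow $\rpdp\to_{T_s}\cdots$, and hence reduces everything to the defining chain. Alternatively, I would use that the order $\preceq_{\mathcal L}$ is generated by single arrows by $T_s$ (equivalently by $C_s$), and pass between consecutive elements of the chain by maximising on the left via the lifting argument of Proposition~\ref{prop-order}.
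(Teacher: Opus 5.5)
Your forward implication is correct. It can be shortened to the paper's remark that $e_Jhe_J$ is itself an element of $H(W)$ and $\{C_{\rpd}\}$ is a subset of $\{C_w\}$, so an arrow in $\HJ(W)$ is literally an arrow in $H(W)$.

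The auxiliary facts you build around $e_J$, however, are false. They transplant to the $C$-basis the vanishing property that the paper proves for the $C^{\da}$-basis relative to $X_{JJ}$. Take $W=S_3$ and $J=\{s_1\}$, so that $e_J=(q+q^{-1})^{-1}C_{s_1}$. A direct computation gives $C_{s_2}C_{s_1}=C_{s_2s_1}$ and $e_JC_{s_2s_1}e_J=e_JC_{s_2s_1}=(q+q^{-1})^{-1}(C_{s_1s_2s_1}+C_{s_1})$.
\begin{itemize}
\item $e_JC_ue_J$ can be nonzero for $u\notin\widetilde X_{JJ}$; already $e_JC_ee_J=e_J\neq 0$.
\item $e_JC_ue_J$ need not be a multiple of a single basis element.
\item $C_{s_1}$ occurs in $e_JC_{s_2s_1}$ although $s_1\not\preceq_{\mathcal R}s_2s_1$ by (\ref{descents}), since $\mathcal L(s_2s_1)=\{s_2\}\not\subseteq\mathcal L(s_1)$. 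So your clause $v\preceq_{\mathcal R}u$ fails as well.
\end{itemize}

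The same example breaks your converse, and this is a genuine gap. You assert that the coefficient of $C_{\rpd}$ in $hC_{\rpdp}$ agrees up to a unit with its coefficient in $e_JhC_{\rpdp}$. That is, you assume $e_J$ creates no $C_{\rpd}$-component out of the $C_v$ with $J\not\subseteq\mathcal L(v)$. But take $h=C_{s_2}$ and $\rpd=\rpdp=s_1$ (the double coset $W_J$): $hC_{s_1}$ has no $C_{s_1}$-term, while $e_JhC_{s_1}$ has one. So the terms $\alpha_ue_JC_u$ with $u\notin\widetilde X_J^{-1}$ really do contribute to the coefficient of $C_{\rpdp}$. The real task is to rule out cancellation. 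Your $\tau$-adjointness and ``maximising on the left'' suggestions are only named, and neither addresses this.

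The paper's argument is the one-line version of your first attempt: $e_Jhe_JC_{\rpd}=e_JhC_{\rpd}$ and $e_JC_{\rpdp}=C_{\rpdp}$, hence $C_{\rpd}\rightarrow_{e_Jhe_J}C_{\rpdp}$. Non-cancellation is left implicit there.

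What makes it rigorous is positivity, which, contrary to your remark, is available. The structure constants of $\{C_w\}$ lie in $\mathbb{Z}_{\geq0}[q,q^{-1}]$; this is known geometrically for Weyl groups (in particular type A) and by Elias--Williamson for every Coxeter system. The argument then runs as follows.
\begin{enumerate}
\item Since the $C_s$ generate $H(W)$, the relation $\rpdp\preceq_{\mathcal L}\rpd$ is realised by a chain of single $C_s$-arrows. Let $h$ be the product of these $C_s$.
\item Then $hC_{\rpd}=\sum_u\alpha_uC_u$ with $\alpha_u\in\mathbb{Z}_{\geq0}[q,q^{-1}]$ and $\alpha_{\rpdp}\neq0$.
\item In $C_{w_J}hC_{\rpd}=\sum_u\alpha_uC_{w_J}C_u$, every contribution to the coefficient of $C_{\rpdp}$ is nonnegative. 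The contribution from $u=\rpdp$ is $q^{-\ell(w_J)}W_J(q^2)\alpha_{\rpdp}\neq0$, so the total is nonzero.
\item Since $e_Jhe_JC_{\rpd}$ is a unit multiple of $C_{w_J}hC_{\rpd}$, this gives a single arrow $C_{\rpd}\rightarrow_{e_Jhe_J}C_{\rpdp}$ in $\HJ(W)$.
\end{enumerate}
The right order is symmetric, and the two-sided and cell statements then follow.
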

\begin{proof}
Let $\cD,\cD'\in\dWJ$. If there is $h\in \HJ(W)$ such that $C_{\rpd}\rightarrow_{h}C_{\rpdp}$ in $\HJ(W)$ then seeing $h$ as an element of $H(W)$, it trivially implies the same property in $H(W)$.

Reciprocally, assume that there is $h\in H(W)$ such that $C_{\rpd}\rightarrow_{h}C_{\rpdp}$. Recall that the elements $C_{\rpd}$ and $C_{\rpdp}$ are invariant by left or right multiplication by $e_J$. So we have $e_Jhe_JC_{\rpd}=e_JhC_{\rpd}$ and thus $C_{\rpd}\rightarrow_{e_Jhe_J}e_JC_{\rpdp}=C_{\rpdp}$ in $\HJ(W)$. This shows the equivalence for the left order. A similar reasoning shows the desired result for the right multiplication, and the  first item follows. The second item is an immediate consequence of the first one.
\end{proof}

\paragraph{Example.} We take $W=S_6$ the symmetric group generated by $s_1,\dots,s_5$ and $W_J=S_2\times S_2\times S_2$ generated by $s_1,s_3,s_5$. Using the one-line notation for a permutation, here is a left cell in $S_6$:
\[\Gamma=\{615432,\ 625431,\ 635421,\ 645321,\ 546321\}\ .\]
Among these 5 elements, only two are in $\widetilde{X}_{JJ}$ (one has to look for those elements with left and right descents containing $1,3,5$) and the resulting cell in $\dWJ$ consists of the two double cosets of the following elements
\[\Gamma\cap\widetilde{X}_{JJ}=\{625431,\ 645321\}\ .\]
In this example, the left cell $\Gamma$ corresponds to permutations having their right tableau in the RS correspondence equal to $\begin{array}{cc}
\fbox{\scriptsize{$1$}} & \hspace{-0.35cm}\fbox{\scriptsize{$3$}} \\[-0.2em]
\fbox{\scriptsize{$2$}} &\\[-0.2em]
\fbox{\scriptsize{$4$}} &\\[-0.2em]
\fbox{\scriptsize{$5$}} & \\[-0.2em]
\fbox{\scriptsize{$6$}} &
\end{array}$. The intersection with $\widetilde{X}_{JJ}$ is not empty because we have chosen a tableau containing $1,3,5$ in its descent. Actually, the intersection is made of those permutations having a left tableau which also contains $1,3,5$ in its descent. In type A, the description of the cells of $\dWJ$ in terms of the RS correspondence is simple and will be explicited in the next section.

\paragraph{Representations of $\HJ(W)$.} Then we discuss the representations of $\HJ(W)$ induced by its left cells and relate them to the cell representations of $H(W)$. We note that for any representation $V$ of $H(W)$, the vector space $e_J(V)$ (which may be $\{0\}$) carries naturally a representation of $\HJ(W)$.

Let $\Gamma$ be a left cell for $W$ and $V_{\Gamma}$ the associated representation of $H(W)$, with basis $\{C_w+I_{\prec_{\mathcal{L}}\Gamma}\}_{w\in \Gamma}$. If non-empty, the subset $\Gamma\cap \widetilde{X}_{JJ}$ indexes the elements in a left cell of $\dWJ$, thanks to the proposition above. We denote $V_{\Gamma\cap \widetilde{X}_{JJ}}$ the associated representation of $\HJ(W)$. 

\begin{prop}\label{prop-rep1}
Let $\Gamma$ be a left cell of $W$ such that $\Gamma\cap \widetilde{X}_{JJ}\neq \emptyset$. As representations of $\HJ(W)$, we have 
$$V_{\Gamma\cap \widetilde{X}_{JJ}}=e_J(V_{\Gamma})\ .$$
\end{prop}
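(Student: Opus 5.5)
The plan is to compare both sides directly in terms of the bases $\{C_w\}$ of $H(W)$ and $\{C_{\rpd}\}$ of $\HJ(W)$. The left cell $\Gamma$ of $W$ has the same right descent for all its elements, by (\ref{descents}). Since $\Gamma\cap\widetilde{X}_{JJ}\neq\emptyset$, this common right descent contains $J$. Let $\Gamma'$ denote the corresponding left cell of $\dWJ$, that is, the image of $\Gamma\cap \widetilde{X}_{JJ}$ given by Proposition \ref{prop-cell1}.

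\textbf{Step 1: compute $e_J(V_\Gamma)$.} The vectors $v_w:=C_w+I_{\prec_{\mathcal{L}}\Gamma}$ with $w\in\Gamma$ form a basis of $V_\Gamma$. By (\ref{TC=qC}), $T_sC_w=qC_w$ whenever $s\in\mathcal{L}(w)$, so $e_JC_w=C_w$ whenever $J\subseteq\mathcal{L}(w)$. For $w\in\Gamma$ we already know $J\subseteq\mathcal{R}(w)$, so $J\subseteq\mathcal{L}(w)$ holds exactly when $w\in\widetilde{X}_{JJ}$. Hence $e_J(V_\Gamma)$ contains the span of $\{v_w\}_{w\in\Gamma\cap\widetilde{X}_{JJ}}$.

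For the reverse inclusion I need $e_J v_w$ to lie in that span for every $w\in\Gamma$; this is the main obstacle. The approach is as follows.
\begin{itemize}
\item Write $e_JC_w$ as a combination of $C_u$'s. It is invariant under left multiplication by $e_J$, hence by each $T_s$ with $s\in J$ (acting by $q$).
\item Extract, for each $u$, the coefficient of $T_u$ in the standard basis; each $C_u$ contributes $T_u$ with coefficient $1$ plus terms $T_x$ with $x<u$.
\item Argue by induction from Bruhat-maximal elements that only $C_u$ with $J\subseteq\mathcal{L}(u)$ occur. This would show that $e_JH(W)$ has basis $\{C_u\}_{J\subseteq\mathcal{L}(u)}$.
\end{itemize}
This is the same descent argument as in Proposition \ref{prop-CT-par}, via (\ref{psyx=qpyx}). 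As an alternative route, one could use that $C_{w_J}H(W)$ is spanned by the $C_u$ with $J\subseteq\mathcal{L}(u)$, a standard consequence of the multiplication formula $C_sC_u$.

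Then $e_JC_w=\sum_u a_uC_u$ with $J\subseteq\mathcal{L}(u)$ and, since it lies in the left ideal generated by $C_w$, with $u\preceq_{\mathcal{L}}w$. Modulo $I_{\prec_{\mathcal{L}}\Gamma}$ only the terms with $u\in\Gamma$ survive. These $u$ satisfy $J\subseteq\mathcal{L}(u)\cap\mathcal{R}(u)$, so $u\in\Gamma\cap\widetilde{X}_{JJ}$. This gives $e_J(V_\Gamma)=\operatorname{span}\{v_w\}_{w\in\Gamma\cap\widetilde{X}_{JJ}}$, and the $v_w$ are linearly independent, being part of a basis.

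\textbf{Step 2: identify the action with $V_{\Gamma'}$.} I take the linear isomorphism $V_{\Gamma'}\to e_J(V_\Gamma)$ sending $C_{\rpd}+I^J_{\prec\Gamma'}$ to $C_{\rpd}+I_{\prec_{\mathcal{L}}\Gamma}$. For $h\in\HJ(W)$, the product $hC_{\rpd}$ lies in $\HJ(W)$, so by Proposition \ref{prop-CT-par} it has a unique expansion in the basis $\{C_{\rpdp}\}$. This expansion is also its expansion in $\{C_w\}_{w\in W}$, because that basis contains $\{C_{\rpdp}\}$.

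By Proposition \ref{prop-cell1}, a double coset $\cD'$ is in $\Gamma'$ (respectively, strictly below $\Gamma'$) in $\dWJ$ exactly when $\rpdp$ is in $\Gamma$ (respectively, strictly below $\Gamma$) in $W$. So the quotient by $I^J_{\prec\Gamma'}$ kills exactly the terms killed by the quotient by $I_{\prec_{\mathcal{L}}\Gamma}$, and the structure constants on both sides coincide. Hence the map is $\HJ(W)$-equivariant, which proves the proposition.
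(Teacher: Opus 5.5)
Your proof is correct and follows the paper's route. Like the paper, you show that $e_JH(W)$ is spanned by the $C_u$ with $J\subseteq\mathcal{L}(u)$, deduce that $e_J(V_\Gamma)$ has basis $\{C_w+I_{\prec_{\mathcal{L}}\Gamma}\}_{w\in\Gamma\cap\widetilde{X}_{JJ}}$ because all of $\Gamma$ has $J$ in its right descent, and then match the actions; you justify the matching more explicitly than the paper does, via Proposition~\ref{prop-cell1}. The one difference is how you justify the spanning lemma: the paper uses the triangular expansion $C_x=\sum_{x'}p_{x',x}\bigl(\sum_{y\in W_Jx'}q^{\ell(y)-\ell(x')}T_y\bigr)$ coming from (\ref{psyx=qpyx}), while you peel off Bruhat-maximal terms using $T_s$-invariance, or use the right ideal $C_{w_J}H(W)$ together with (\ref{descents}), and both of these also work.
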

\begin{proof}
Recall that $\widetilde{X}_{J}^{-1}$ denotes the set of maximal-length representatives of right cosets in $W_J\backslash W$, or equivalently, the set of elements containing $J$ in their left descent. First we note that the set of elements:
\[\{C_x\}_{x\in \widetilde{X}_{J}^{-1}}\]
is a basis of the subspace $e_JH(W)$ (the image of $H(W)$ by left multiplication by $e_J$). The proof is similar to the proof of Proposition \ref{prop-CT-par}. Namely, we take $x\in \widetilde{X}_{J}^{-1}$ and we write, using the same arguments, that:
\[C_x = \sum_{x'\in W_J\backslash W}\sum_{y\in W_Jx'} p_{y,x} T_y= \sum_{x'\in W_J\backslash W} p_{x',x}\Bigl(\sum_{y\in W_Jx'}q^{\ell(y)-\ell(x')}T_y\Bigr)\ .\]
The sum in parenthesis, defined for any coset in $W_J\backslash W$ forms a basis of $e_JH(W)$ (since $e_JT_z$ is proportional to it for any $z\in W_Jx'$) and the above formula thus expresses a triangular change of basis.

Then we take $\Gamma$ an arbitrary left cell of $W$, and we prove that the following subset is a basis of the subspace $e_J(V_{\Gamma})$:
\begin{equation}\label{basis-rep1}
\{C_w+I_{\prec_{\mathcal{L}}\Gamma}\}_{w\in \Gamma\cap \widetilde{X}_{J}^{-1}}\ .
\end{equation}
From property (\ref{TC=qC}), we have immediately that $e_JC_w=C_w$ if $w\in \widetilde{X}_{J}^{-1}$, so that the elements in (\ref{basis-rep1}) indeed belong to $e_J(V_{\Gamma})$. They are obviously linearly independent, as a subset of the basis of $V_{\Gamma}$. Then take $w\in \Gamma$ which is not in $\widetilde{X}_{J}^{-1}$. Inside $e_JH(W)$, the element $e_JC_w$ decomposes in the basis $\{C_x\}_{x\in \widetilde{X}_{J}^{-1}}$, and moreover uses only elements in $\Gamma$ or strictly less than $\Gamma$ (for $\prec_{\mathcal{L}}$). Thus, $e_JC_w+I_{\prec_{\mathcal{L}}\Gamma}$ can be written in terms of the elements in (\ref{basis-rep1}), which is therefore a generating set of $e_J(V_{\Gamma})$.

Now we assume that $\Gamma$ is such that $\Gamma\cap \widetilde{X}_{JJ}\neq \emptyset$. It means that there is an element in $\Gamma$ which contains $J$ in its right descent. Since the right descents of all elements in the same left cell coincide (see (\ref{descents}), this means that all elements in $\Gamma$ contain $J$ in their right descent. In this case, the basis above of $e_J(V_{\Gamma})$ becomes:
\[\{C_w+I_{\prec_{\mathcal{L}}\Gamma}\}_{w\in \Gamma\cap \widetilde{X}_{JJ}}\ .\]
This identifies immediately with the basis of the cell representation $V_{\Gamma\cap \widetilde{X}_{JJ}}$ of $\HJ(W)$ and the $\HJ(W)$-module structures (by left multiplication) are the same.
\end{proof}

\subsection{A second Kazhdan--Lusztig basis for $\HJ(W)$ and its cells} \label{subsec_KLbasispar2}

In the previous subsection, we have studied a Kazhdan--Lusztig basis of $\HJ(W)$ coming from the basis $\{C_w\}$ of $H(W)$. In this section we want to use the other basis $\{C^{\da}_w\}$ of $H(W)$. We will see that it behaves quite differently with respect to the idempotent $e_J$. In type A, this basis will be relevant for the applications to Schur--Weyl duality in Section \ref{sec-SW}.

\subsubsection{The basis} The basis $\{C_{\rpd}\}$ of the previous section has the property that $e_JC_{\rpd}e_J=C_{\rpd}$, so that it answers nicely the question of describing the image of the two-sided multiplication by $e_J$ on $H(W)$.

We can also wonder about the kernel of the two-sided multiplication by $e_J$ on $H(W)$. It turns out that there is also a nice description, this time in terms of the other Kazhdan--Lusztig basis $\{C^\da_w\}$ of $H(W)$. Indeed, we have:
\[e_JC^{\da}_we_J=0\ \ \ \ \text{for all $w\notin X_{JJ}$.}\]
To see this, let $w\notin X_{JJ}$. This means that there is some $s\in J$ such that $s\in\mathcal{R}(w)$ or $s\in\mathcal{L}(w)$. Say $s\in\mathcal{R}(w)$. So we have:
\[C^{\da}_we_J=C^{\da}_w\frac{1+qT_s}{1+q^2}e_J=0\ .\]
The first equality uses $T_se_J=qe_J$ if $s\in J$, while the second equality uses $C^{\da}_wT_s=-q^{-1}C^{\da}_w$ if $s\in\mathcal{R}(w)$. Recall also that $(1+q^2)$ is invertible in $A$. A similar proof works if $s\in\mathcal{L}(w)$.

In fact the elements $C^{\da}_w$ with $w\notin X_{JJ}$ form a basis of the kernel of the two-sided multiplication by $e_J$, and we obtain a basis of $\HJ(W)$ from the remaining elements, as we show in the following proposition. Note that for this second basis, it is more natural to work with the renormalised standard basis $\{e_JT_{\rmd}e_J\}$ of $\HJ(W)$, see \eqref{TD-PTxP} and 
\eqref{TBasis-par2}.
\begin{prop}\label{prop-CT-par2}
The set $\{e_JC^{\da}_{\rmd}e_J\}_{\cD\in\dWJ}$ is a basis of $\HJ(W)$ and we have:
\begin{equation}\label{CvsT-par2}
e_JC^{\da}_{\rmd}e_J=\sum_{\cD'\leq\cD}a_{\cD',\cD}e_JT_{\rmdp}e_J\ ,\ \ \ \text{where}\ a_{\cD',\cD}=\sum_{y\in\cD'}  (-1)^{\ell(y)}q^{\ell(y)-\ell(\rmdp)}\overline{p}_{y,\rmd}\ .
\end{equation}
Moreover, $e_JC^{\da}_{\rmd}e_J$ is the unique element $B_{\cD}$ of $\HJ(W)$ satisfying
\[\overline{B_\cD}=B_\cD\ \ \ \text{and}\ \ \ B_\cD=(-1)^{\ell(\rmd)}e_JT_{\rmd}e_J+\sum_{\cD'<\cD}\alpha_{\cD'\cD}e_JT_{\rmdp}e_J\ \ \text{with $\alpha_{\cD'\cD}\in q\mathbb{Z}[q]$.}\]
\end{prop}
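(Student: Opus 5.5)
Expand $C^{\da}_{\rmd}$ in the standard basis via (\ref{secondKL}), sandwich between $e_J$, and regroup the terms by double coset; this gives (\ref{CvsT-par2}). Bar-invariance and the unitriangular shape then come from $\overline{e_J}=e_J$ (\ref{barPJ}) and the degree properties of the $\overline{p}_{y,x}$, and uniqueness from the vanishing lemma already used in Proposition \ref{prop-CT-par}.

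\emph{Step 1: the expansion.} Write $x=\rmd$, so that
\[e_JC^{\da}_xe_J=\sum_{y\leq x}(-1)^{\ell(y)}\overline{p}_{y,x}\,e_JT_ye_J .\]
For $y\in\cD'$ we have $e_JT_ye_J=q^{\ell(y)-\ell(\rmdp)}e_JT_{\rmdp}e_J$, as stated after (\ref{TBasis-par2}). Grouping the $y$ by their double coset gives exactly the coefficients $a_{\cD',\cD}$ of (\ref{CvsT-par2}).

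\emph{Step 2: the range of the sum.} If some $y\in\cD'$ satisfies $y\leq x$, then $\rmdp\leq y\leq x=\rmd$, because $\rmdp$ is the Bruhat-minimum of $\cD'=[\rmdp,\rpdp]$. Hence $\cD'\leq\cD$.

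\emph{Step 3: the diagonal and off-diagonal coefficients.} For $\cD'=\cD$, the only $y\in\cD$ with $y\leq x$ is $y=x$ itself, since $x$ is minimal in $\cD$. So $a_{\cD,\cD}=(-1)^{\ell(x)}$. For $\cD'<\cD$, every $y\in\cD'$ with $y\le x$ satisfies $y<x$, so $\overline{p}_{y,x}\in q\mathbb{Z}[q]$. Since $\ell(y)-\ell(\rmdp)\geq 0$, multiplying by $q^{\ell(y)-\ell(\rmdp)}$ keeps the coefficient in $q\mathbb{Z}[q]$. The transition matrix is therefore unitriangular up to sign.

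\emph{Step 4: basis and bar-invariance.} The family $\{e_JT_{\rmd}e_J\}$ is a basis of $\HJ(W)$ by (\ref{TBasis-par2}) with $R_J=X_{JJ}$. A unitriangular change of basis (with respect to the Bruhat order on $\dWJ$) therefore gives a basis. Bar-invariance holds because $\overline{C^{\da}_x}=C^{\da}_x$ and $\overline{e_J}=e_J$, and the bar map is a ring automorphism.

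\emph{Step 5: uniqueness, the main obstacle.} Let $B$ be another element with the stated properties. Then $h=B-e_JC^{\da}_{\rmd}e_J$ is bar-invariant and is a combination of the $e_JT_{\rmdp}e_J$ with $\cD'<\cD$ and coefficients in $q\mathbb{Z}[q]$. The obstacle is that the $e_JT_{\rmdp}e_J$ are not plain $T$'s, and their $T$-coefficients carry the factor $1/W_J(q^2)$, which is not a polynomial in $q$.

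To get around this, I would rewrite each term in the standard basis $T_{\cD'}$ by (\ref{TD-PTxP}):
\[e_JT_{\rmdp}e_J=\frac{W_{L}(q^2)}{W_J(q^2)^2}\,q^{\ell(\rpdp)-\ell(\rmdp)}\,T_{\cD'}, \qquad L=J\cap\rmdp J(\rmdp)^{-1}.\]
Using $\sum_{w\in\cD'}q^{2\ell(w)}=q^{2\ell(\rmdp)}W_J(q^2)^2/W_L(q^2)$, this simplifies to
\[e_JT_{\rmdp}e_J=q^{-\ell(\rmdp)}\,\frac{\sum_{w\in\cD'}q^{\ell(w)}T_w}{\sum_{w\in\cD'}q^{2\ell(w)}}.\]
In the variable $v=q^{-1}$, the coefficient of $T_w$ here is $v^{2\ell(\rmdp)+\ell(\rpdp)-\ell(w)}\cdot v^{\ell(\rpdp)-2\ell(\rmdp)}/(\text{polynomial in }v\text{ with constant term }1)$. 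This is a power series in $v$ with valuation at least $\ell(\rpdp)-\ell(w)\geq 0$.

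So the $T_w$-coefficients of $h$ have the form (element of $q\mathbb{Z}[q]$)$\times$(series in $q^{-1}$), which is not obviously controlled. I therefore expect the cleaner route to be the opposite conversion. Write $h$ as a $q\mathbb{Z}[q]$-combination of $\dagger$-type normalised elements, and apply $\dagger$ to turn the problem into the uniqueness statement of Proposition \ref{prop-CT-par}.

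Concretely: $\dagger$ sends $T_w\mapsto(-1)^{\ell(w)}\overline{T_w}$-type elements and $e_J$ to the antisymmetriser, which makes the comparison awkward. The fallback is a direct triangular induction. Let $\cD'$ be maximal with nonzero coefficient $\alpha$ in $h$, and compare the coefficients of $e_JT_{\rmdp}e_J$ in $h$ and in $\overline{h}$. Writing $\overline{e_JT_{\rmdp}e_J}=e_J\overline{T_{\rmdp}}e_J$ and expanding $\overline{T_{\rmdp}}=T_{\rmdp}+(\text{lower in Bruhat})$ shows that $\overline{e_JT_{\rmdp}e_J}\equiv e_JT_{\rmdp}e_J$ modulo lower double cosets. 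Bar-invariance then forces $\alpha=\overline{\alpha}$, and together with $\alpha\in q\mathbb{Z}[q]$ this gives $\alpha=0$. Verifying this last triangularity of bar on the normalised basis is the step I expect to need the most care.
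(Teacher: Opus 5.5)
Your proposal is correct. Steps 1--4 are the paper's argument: expand $C^{\da}_{\rmd}$ in the standard basis, multiply by $e_J$ on both sides, and regroup by double cosets via $e_JT_ye_J=q^{\ell(y)-\ell(\rmdp)}e_JT_{\rmdp}e_J$. The range $\cD'\le\cD$, the diagonal sign and the $q\mathbb{Z}[q]$ property then follow.

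The only real difference is uniqueness. The paper handles it in one line, saying it is ``exactly as'' in Proposition~\ref{prop-CT-par}, i.e.\ by the lemma that a bar-invariant $\sum_w a_wT_w$ with all $a_w\in q\mathbb{Z}[q]$ must vanish. As you noticed, this does not transfer verbatim. With $x=\rmdp$ and $L=J\cap xJx^{-1}$, the $T_w$-coefficient of $e_JT_{x}e_J$ is $q^{\ell(w)-\ell(x)}W_L(q^2)/W_J(q^2)^2$, which is not a Laurent polynomial. For rational functions in $A$ the lemma fails: $q/(1+q^2)$ is bar-invariant and vanishes at $q=0$.

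Your fallback is sound and is the right way to make the paper's claim rigorous. Bar is unitriangular on $\{e_JT_{\rmd}e_J\}$, because $\overline{T_x}=T_x+\sum_{y<x}(\ast)T_y$. Any $y<\rmdp$ lies in a double coset strictly below $\cD'$, since $\rmdp$ is the Bruhat minimum of $\cD'$. You then run the maximal-support argument directly on the polynomial coefficients $\beta_{\cD'}\in q\mathbb{Z}[q]$. This needs only that $\{e_JT_{\rmd}e_J\}$ is an $A$-basis.

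In a write-up, drop the detours through $T_{\cD'}$ and $\dagger$. Note also that the prefactor in your rewriting should be $q^{+\ell(\rmdp)}$, not $q^{-\ell(\rmdp)}$.
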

\begin{proof}
The proof is a calculation similar to the proof of Proposition \ref{prop-CT-par}. We use that 
$$e_JT_ye_J=q^{\ell(y)-\ell(\rmdp)}e_JT_{\rmdp}e_J\,,$$ 
if $y$ is in the double coset $\cD'$. To give details, we let $x=\rmd$ and we write:
\[\begin{aligned}
e_JC^{\da}_{x}e_J &= \sum_{y\leq x} (-1)^{\ell(y)}\overline{p}_{y,x} e_JT_ye_J\\
&=  \sum_{\cD'\in\dWJ}\sum_{y\in\cD'}  (-1)^{\ell(y)}\overline{p}_{y,x} e_JT_ye_J\\
&= \sum_{\cD'\in\dWJ}\Bigl(\sum_{y\in\cD'}  (-1)^{\ell(y)}\overline{p}_{y,x}q^{\ell(y)-\ell(\rmdp)}\Bigr) e_JT_{\rmdp}e_J\ .
\end{aligned}\]
The double cosets $\cD'$ appearing with non-zero coefficient must contain an element $y\in\cD'$ such that $y\leq x$. This implies that $\rmdp\leq x$. So the sum can be restricted to $\cD'\leq\cD$. Moreover it is immediate that $a_{\cD,\cD}=(-1)^{\ell(\rmd)}$ since the only element $y$ in $\cD$ satisfying $y\leq x$ is $y=x$, due to the minimality of $x=\rmd$. Thus the formula expresses a unitriangular (up to a sign) change of basis.

The stability of $e_JC^{\da}_{\rmd}e_J$ under the bar involution is immediate since each factor is stable, see (\ref{barPJ}). The first coefficient was calculated before and the fact that the other coefficients $a_{\cD',\cD}$ are in $q\mathbb{Z}[q]$ is immediate since the polynomials $\overline{p}_{y,\rmd}$ are in $q\mathbb{Z}[q]$ for $y\in\cD'$. The unicity statement is proved exactly as in the end of the proof of Proposition \ref{prop-CT-par} (with $q$ instead of $q^{-1}$).
\end{proof}
\begin{rema}
The coefficients in (\ref{CvsT-par2}) are polynomials in $q$ with integer coefficients, but they do not have to be in $\mathbb{Z}_{\geq 0}[q]$ or in $\mathbb{Z}_{\leq 0}[q]$ as shown in the second example below.
\end{rema}
\begin{exam}
$\bullet$ Take $W=S_4$ and $W_J=S_2\times S_2$ as in Example \ref{exam-basis1}. The three double cosets have as minimal representatives: $e, s_2, s_2s_1s_3s_2$. The formulas illustrating the proposition are:
 \[\begin{array}{l}
e_JC^\da_{e}e_J=e_J  \\[0.5em]
e_JC^{\da}_{s_2}e_J=-e_JT_{s_2}e_J+qe_J\\[0.5em]
e_JC^{\da}_{s_2s_1s_3s_2}e_J=e_JT_{s_2s_1s_3s_2}e_J-(q+q^3)e_JT_{s_2}e_J+q^2e_J
\end{array}\]
For example, in $C^{\da}_{s_2s_1s_3s_2}$, the terms corresponding to the trivial double coset are $q^2T_{s_1s_3}-q^3T_{s_1}-q^3T_{s_3}+(q^2+q^4)T_{e}$, so that the $q^2$ in front of $e_J$ above is obtained as $q^4-q^4-q^4+(q^2+q^4)$.

\vskip .1cm
$\bullet$ Take $W=S_4$ and $W_J=S_1\times S_2\times S_1$. The parabolic subgroup is generated by $s_2$. There are 7 double cosets and one of the minimal-length representatives is $s_1s_2s_3s_2s_1$. The formula in the proposition is:
\[eC^\da_{12321}e=-eT_{12321}e+q^2 eT_{123}e+q^2eT_{321}e+(q-q^3)eT_{13}e-q^2eT_1e-q^2eT_3e+q^3e\,,\]
where we have abbreviated $e_J$ by $e$, and the generators $s_i$ by their letters $i$. We note the coefficient $(q-q^3)$ in front of $eT_{13}e$ containing both signs $\pm1$. It is obtained by looking at the following coefficients in $C^\da_{12321}$:
\[C^\da_{12321}=...+(q+q^3)T_{13}-q^2 T_{213}-q^2 T_{132}+ 0T_{2132}\,,\]
which are the cofficients in front of the elements in the double coset of $s_1s_3$. The cofficient $(q-q^3)$ is obtained as $(q+q^3)-q^3-q^3$.
\end{exam}

\begin{rema}
Looking at the basis in the previous proposition, we may wonder why not considering the set of elements $\{e_JC_{\rmd}e_J\}_{\cD\in\dWJ}$. In fact, one can prove exactly as above that this is indeed a basis of $\HJ(W)$ and that we have:
\begin{equation}\label{CvsT-par3}
e_JC_{\rmd}e_J=\sum_{\cD'\leq\cD}b_{\cD',\cD}e_JT_{\rpdp}e_J\ ,
\end{equation}
where $b_{\cD',\cD}=\sum_{y\in\cD'} q^{\ell(y)-\ell(\rpdp)}p_{y,x}$. These coefficients are polynomials in $q^{-1}$ with integer coefficients, thanks to the use of the basis elements $e_JT_{\rpdp}e_J$ instead of $e_JT_{\rmdp}e_J$. Obviously, if the polynomials $p_{y,x}$ are in $\mathbb{Z}_{\geq 0}[q^{-1}]$ then so are the coefficients $b_{\cD',\cD}$.

Still, this basis is less natural and less easy to handle than the basis in Proposition \ref{prop-CT-par}, and unlike the basis in Proposition \ref{prop-CT-par2}, it does not play any role in our study of the Schur--Weyl duality in Section \ref{sec-SW}. So we will not consider it further.
\end{rema}

\subsubsection{Cells in $\dWJ$ and representations of $\HJ(W)$} 

In this subsection, the orders, the cells and the associated representations of $H(W)$ are those constructed from the basis $\{C^{\da}_{w}\}_{w\in W}$ of $H(W)$. The orders, the cells and the associated representations of $\HJ(W)$ are defined similarly, using the basis $\{e_JC^{\da}_{\rmd}e_J\}_{\cD\in\dWJ}$ of $\HJ(W)$ just obtained.

\paragraph{Cells in $\dWJ$.} Here is the statement that is valid in general for the type of cells in $\dWJ$ considered in this section.
\begin{prop}\label{prop-cell2a} Let $\mathcal{X}$ stands for $\mathcal{L}$ or $\mathcal{R}$ or $\mathcal{LR}$. We have
$$\text{$\cD\preceq_{\mathcal{X}} \cD'$ in $\dWJ$\ \ \ $\Longrightarrow$\ \ \ $\rmd\preceq_{\mathcal{X}}  \rmdp$ in $W$.}$$
\end{prop}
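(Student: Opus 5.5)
It suffices to treat a single arrow, since both preorders are transitive closures of arrows. So assume that $e_JC^{\da}_{\rmd}e_J\rightarrow_{h}e_JC^{\da}_{\rmdp}e_J$ in $\HJ(W)$ for some $h\in\HJ(W)$. We want a chain of arrows in $H(W)$ from $C^{\da}_{\rmd}$ to $C^{\da}_{\rmdp}$. The right and two-sided cases are handled identically, using right multiplication or both.

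Since $h\in\HJ(W)$, we have $h=e_Jh=he_J$. Hence
\[h\,e_JC^{\da}_{\rmd}e_J=e_J\bigl(hC^{\da}_{\rmd}\bigr)e_J .\]
Now expand $hC^{\da}_{\rmd}=\sum_{u\in W}\beta_uC^{\da}_u$ in $H(W)$, where $h$ is viewed as an element of $H(W)$. Applying the two-sided multiplication by $e_J$ kills every $C^{\da}_u$ with $u\notin X_{JJ}$, by the computation preceding Proposition \ref{prop-CT-par2}. This leaves
\[h\,e_JC^{\da}_{\rmd}e_J=\sum_{u\in X_{JJ}}\beta_u\,e_JC^{\da}_ue_J .\]
Every $u\in X_{JJ}$ is $r^-$ of a unique double coset, and by Proposition \ref{prop-CT-par2} the elements $e_JC^{\da}_ue_J$ with $u\in X_{JJ}$ form a basis of $\HJ(W)$. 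So the right-hand side is already the expansion in that basis, and the coefficient of $e_JC^{\da}_{\rmdp}e_J$ is exactly $\beta_{\rmdp}$. By hypothesis this coefficient is nonzero, hence $\beta_{\rmdp}\neq0$, which means $C^{\da}_{\rmd}\rightarrow_{h}C^{\da}_{\rmdp}$ in $H(W)$.

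The point that needs care is that no cancellation can occur when passing through $e_J\cdot e_J$. This is guaranteed by two facts together: the basis of $\HJ(W)$ is literally the image of the subset $\{C^{\da}_u\}_{u\in X_{JJ}}$, and the complementary basis vectors are annihilated. I expect no real obstacle here. The converse implication fails for exactly this reason: an arrow in $H(W)$ from $C^{\da}_{\rmd}$ may pass through elements outside $X_{JJ}$, or use $h\notin\HJ(W)$, and inserting $e_J$ on the left of $C^{\da}_{\rmd}$ does not preserve it, unlike what happened for $C_{\rpd}$ in Proposition \ref{prop-cell1}. This explains why only one direction is asserted.
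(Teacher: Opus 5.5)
Your proof is correct and is essentially the paper's argument. Both rewrite $h\,e_JC^{\da}e_J$ as $e_J(hC^{\da})e_J$, expand $hC^{\da}$ in the basis $\{C^{\da}_w\}$ of $H(W)$, and discard the terms with $w\notin X_{JJ}$. The coefficient is then read off because the surviving $e_JC^{\da}_ue_J$ are distinct basis elements of $\HJ(W)$.

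Two small remarks. First, the paper's convention is that $w'\preceq_{\mathcal{L}}w$ when arrows run from $C_w$ to $C_{w'}$, so the hypothesis $\cD\preceq_{\mathcal{L}}\cD'$ gives arrows from $\cD'$ to $\cD$, not from $\cD$ to $\cD'$. This is harmless, since your single-arrow argument is symmetric in the two cosets. Second, your closing assertion that the converse fails is not established by anything here; the paper only says it does not prove the converse.
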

\begin{proof}
Assume that there exists $h\in\HJ(W)$ such that $e_JC^{\da}_{r^-(\cD)}e_J$ appears in $he_JC^{\da}_{\rmdp}e_J$, which is equal to $e_JhC^{\da}_{\rmdp}e_J$ since $h\in\HJ(W)$. Now expand $hC^{\da}_{\rmdp}$ in the basis $\{C^{\da}_w\}$ in $H(W)$ and then multiply on both sides by $e_J$. All terms $C^{\da}_w$ with $w\notin X_{JJ}$ give $0$. This shows that $C^{\da}_{r^-(\cD)}$ appears in $hC^{\da}_{r^-(\cD')}$ and this shows the implication for the left order. The verification for the right order is the same and this implies the implication for the two-sided order.
\end{proof}
Note that we do not prove the equivalence (in contrast with Proposition \ref{prop-cell1}). Nevertheless, we prove below a description of the cells under an irreducibility assumption for the cell modules. Due to this irreducibility assumption, the description is less complete than for the previous basis. However, this will be enough for type A, where all cell representations are irreducible over $\mathbb{C}(q)$. For brevity, we treat only the left cells.
\begin{prop}\label{prop-cell2b} Let $\cD\in\dWJ$ and $\Gamma$ the left cell in $W$ containing $\rmd$. Assume that the corresponding representation $V^\da_{\Gamma}$ is irreducible for $H(W)$ over $\mathbb{C}(q)$. We have
\begin{equation}\label{cellequiv2}
\text{$\cD\sim_{\mathcal{L}} \cD'$ in $\dWJ$\ \ \ $\Longleftrightarrow$\ \ \ $\rmd\sim_{\mathcal{L}}  \rmdp$ in $W$.}
\end{equation}
Assume in particular that all cell representations $V^\da_{\Gamma}$ are irreducible for $H(W)$ over $\mathbb{C}(q)$. Then the left cells in $\dWJ$ are the non-empty sets of the form:
\[\Gamma\cap X_{JJ}\,,\ \ \ \ \ \ \text{for a left cell $\Gamma$ in $W$,}\]
where we have identified double cosets in $\dWJ$ with their minimal-length representatives in $X_{JJ}$. 
\end{prop}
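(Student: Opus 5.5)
The forward implication of (\ref{cellequiv2}) follows directly from Proposition \ref{prop-cell2a} applied in both directions. The real content is the converse. Assume $\rmd\sim_{\mathcal{L}}\rmdp$ in $W$, with both in the left cell $\Gamma$. I would argue through the representation $e_J(V^\da_\Gamma)$ of $\HJ(W)$ rather than chase individual arrows.

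\textbf{Step 1: a basis of $e_J(V^\da_\Gamma)$.} I first compute $e_J(V^\da_\Gamma)$, or rather $V^\da_\Gamma e_J$-type projections, in the basis $\{C^\da_w+I^\da_{\prec_{\mathcal{L}}\Gamma}\}_{w\in\Gamma}$.

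All elements of $\Gamma$ share the same right descent by (\ref{descents}). Since $\rmd\in X_{JJ}$, this common right descent avoids $J$. By the computation preceding Proposition \ref{prop-CT-par2}, $e_JC^\da_w=0$ whenever $\mathcal{L}(w)\cap J\neq\emptyset$. Hence $e_J(V^\da_\Gamma)$ is spanned by the images of $e_JC^\da_w$ with $w\in\Gamma\cap X_{JJ}$.

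To keep track of a right $e_J$ as well, I would instead work in the bimodule-like quotient
\[M:=e_J\,I^\da_{\preceq_{\mathcal{L}}\Gamma}\,e_J\big/ e_J\,I^\da_{\prec_{\mathcal{L}}\Gamma}\,e_J,\]
which is a left $\HJ(W)$-module. It is spanned by the classes of $e_JC^\da_we_J$ for $w\in\Gamma\cap X_{JJ}$. These classes are linearly independent, because the elements $e_JC^\da_{\rmd}e_J$ form a basis of $\HJ(W)$ by Proposition \ref{prop-CT-par2}, and $e_JI^\da_{\prec}e_J$ is spanned by those with $\rmd\prec_{\mathcal{L}}\Gamma$.

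\textbf{Step 2: $M$ is a nonzero projection of $V^\da_\Gamma$.} I would identify $M$ with $e_J(V^\da_\Gamma)$, up to right multiplication by $e_J$, which acts as a scalar-type projection on the right-descent-free side. The cleaner route is to show that the map $V^\da_\Gamma\to M$, $v\mapsto e_Jve_J$, is a surjective left-module map after applying $e_J$.

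The key point is that $M\neq 0$. It contains the class of $e_JC^\da_{\rmd}e_J$, which is nonzero since $\rmd\in\Gamma\cap X_{JJ}$.

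\textbf{Step 3: irreducibility.} For an idempotent $e_J$, the functor $V\mapsto e_JV$ sends irreducible $H(W)$-modules over $\mathbb{C}(q)$ to irreducible $e_JH(W)e_J$-modules or to zero. This is standard: any nonzero $e_JH e_J$-submodule $U$ satisfies $H U=V$, hence $e_JHe_JU=e_JV$. So $M$ is an irreducible $\HJ(W)$-module over $\mathbb{C}(q)$.

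Now take $\cD,\cD'$ with $\rmd,\rmdp\in\Gamma$. The submodule of $M$ generated by the class of $e_JC^\da_{\rmdp}e_J$ is all of $M$. It therefore contains the class of $e_JC^\da_{\rmd}e_J$. This means there exists $h\in\HJ(W)$ such that $e_JC^\da_{\rmd}e_J$ appears with nonzero coefficient in $h\,e_JC^\da_{\rmdp}e_J$, modulo the lower terms. That gives $\cD\preceq_{\mathcal{L}}\cD'$, and symmetry gives equivalence.

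The base-ring subtlety should be harmless. Appearing with a nonzero coefficient over $\mathbb{C}(q)$ can be arranged over $A$ by clearing denominators in $h$.

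\textbf{Step 4: the global description.} The final statement follows. Every $\cD$ lies in the left cell $\Gamma\cap X_{JJ}$ where $\Gamma\ni\rmd$, and by (\ref{cellequiv2}) these sets are exactly the left cells of $\dWJ$.

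\textbf{Main obstacle.} I expect the delicate step to be Step 2: correctly identifying the $\HJ(W)$-module $M$, built from two-sided $e_J$-projections of the ideals, with $e_J(V^\da_\Gamma)$, so that irreducibility transfers. If that identification is awkward, I would argue directly with $e_J(V^\da_\Gamma)$ and the vectors $e_JC^\da_{\rmd}$. I would then use right multiplication by $e_J$, which commutes with the left action, to show that the coefficient extraction is compatible with the basis $\{e_JC^\da_{\rmd}e_J\}$.
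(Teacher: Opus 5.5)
Your argument is correct. It uses the same core mechanism as the paper: irreducibility of $V^\da_\Gamma$, sandwiching by $e_J$, and reading off a coefficient in the basis $\{e_JC^\da_we_J\}$. What differs is how the argument is organized.

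The paper never leaves $V^\da_\Gamma$. It uses irreducibility over $H(W)$ to find $h$ with $he_JC^\da_{\rmdp}\equiv C^\da_{\rmd}$ modulo $I^\da_{\prec_{\mathcal{L}}\Gamma}$, then multiplies on both sides by $e_J$ and uses $e_Jhe_J$. For this it must first check that $e_JC^\da_{\rmdp}\notin I^\da_{\prec_{\mathcal{L}}\Gamma}$. It does so by showing that $C^\da_{\rmdp}$ occurs with coefficient $1$ in $e_JC^\da_{\rmdp}$.

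You instead work in the $\HJ(W)$-module $M$. There, non-vanishing of $[e_JC^\da_{\rmdp}e_J]$ is immediate from Proposition \ref{prop-CT-par2}, and you transfer irreducibility through the idempotent lemma. Your proof of that lemma ($HU=V$, hence $e_JHe_JU=e_JV$) is exactly the paper's sandwiching step in abstract form.

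Your version does the coefficient extraction directly in $\HJ(W)$. As a by-product it gives $M\cong e_J(V^\da_\Gamma)$ over $\mathbb{C}(q)$, which is essentially Proposition \ref{prop-rep2} under the irreducibility hypothesis. The paper's version avoids the auxiliary module and the general lemma.

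Two small points.

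\begin{itemize}
\item Step 2, which you flag as the main obstacle, is routine. The map $e_Jy+I^\da_{\prec_{\mathcal{L}}\Gamma}\mapsto e_Jye_J+e_JI^\da_{\prec_{\mathcal{L}}\Gamma}e_J$ is well defined, $\HJ(W)$-linear and onto $M$. Since $M\neq 0$, $e_J(V^\da_\Gamma)$ is nonzero, hence irreducible, so the map is an isomorphism. Nothing needs to be identified in advance, and the remark about a ``scalar-type projection'' can be dropped.
\item ``Clearing denominators'' does not literally move a $\mathbb{C}(q)$-combination into $\HJ(W)$. Instead, note that the coefficient of $e_JC^\da_{\rmd}e_J$ in $he_JC^\da_{\rmdp}e_J$ is linear in $h$. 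So if it is nonzero for $h=\sum_i c_ib_i$, with $\{b_i\}$ a basis of $\HJ(W)$, it is nonzero for some single $b_i$. The paper glosses over this point in the same way.
\end{itemize}
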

\begin{proof}
From Proposition \ref{prop-cell2a}, we already have the direct implication of \eqref{cellequiv2}. For the reverse implication, assume that $\rmd\sim_{\mathcal{L}}  \rmdp$ in $W$, so that both $\rmd,\rmdp$ are in the same cell $\Gamma$. We need to show that $\cD\preceq_{\mathcal{L}} \cD'$ in $\dWJ$.

First we note that we can write:
\[e_JC^\da_{\rmdp}=C^\da_{\rmdp}+\sum_{x\neq\rmdp}\alpha_xC^\da_x\ .\]
Indeed first we decompose the element $e_JC^\da_{\rmdp}$ in the basis $\{C^\da_w\}$ of $H(W)$ as $e_JC^\da_{\rmdp}=\sum_w\alpha_w C_w^{\da}$ and we multiply from left and right by $e_J$. We find that the coefficient in front of $C^\da_{\rmdp}$ must be equal to $1$, and for that we used that $e_JC^\da_{\rmdp}e_J$ is a basis element of $\HJ(W)$ and all other terms $e_JC^\da_we_J$ are either $0$ or different basis elements. 

Since $\rmdp$ is in the cell $\Gamma$, the element $e_JC^\da_{\rmdp}$ is in the ideal $I^\da_{\preceq_{\mathcal{L}}\Gamma}$, and moreover the preceding discussion shows that it is not in $I^\da_{\prec_{\mathcal{L}}\Gamma}$ (because of the non-zero coefficient in front of $C^\da_{\rmdp}$). Recalling that $V^\da_{\Gamma}$ is defined as the quotient of the left ideal $I^\da_{\preceq_{\mathcal{L}}\Gamma}$ by the left ideal $I^\da_{\prec_{\mathcal{L}}\Gamma}$, this means that
\begin{equation}\label{PJCda}
e_JC^\da_{\rmdp}+I_{\prec_{\mathcal{L}}\Gamma}\neq 0_{V^\da_{\Gamma}}\ .
\end{equation}
Now, given another element $y\in V^\da_{\Gamma}$, from the irreducibility assumption on $V^\da_{\Gamma}$ it is always possible to find an element $h\in \mathbb{C}(q)H(q)$ sending $e_JC^\da_{\rmdp}$ to this element $y$. We choose for $y$ the element $C^{\da}_{\rmd}+I^\da_{\prec_{\mathcal{L}}\Gamma}$. It is possible to do so since $\rmd$ is also in the cell $\Gamma$.
We obtain that
\[he_JC^\da_{\rmdp}=C^\da_{\rmd}+x\,,\ \ \ \ \text{with $x\in I_{\prec_{\mathcal{L}}\Gamma}$. }\]
for some $h\in \mathbb{C}(q)H(q)$. Multiplying from left and right by $e_J$, we find
\begin{equation}\label{eJheJC}
e_Jhe_JC^\da_{\rmdp}e_J=e_JC^\da_{\rmd}e_J+e_Jxe_J \,.\end{equation}
The element $x\in I_{\prec_{\mathcal{L}}\Gamma}$ writes in terms of basis elements $C_w^\da$ with $w\neq \rmd$ and therefore $e_Jxe_J$ writes in terms of basis elements $e_JC_w^\da e_J$ different from $e_JC^\da_{\rmd}e_J$ (here we used again that $e_JC_w^\da e_J$ is either $0$ or directly a basis element). Therefore, when the rhs of (\ref{eJheJC}) is written in the basis $\{e_JC_w^\da e_J\}$, the element $e_JC^\da_{\rmd}e_J$ appears with coefficient $1$.

So far, the element $e_Jhe_J$ by which we multiply belongs to $\mathbb{C}(q)\HJ(W)$. But we can multiply by a suitable element of the base ring $A$ to produce an equality in $\HJ(W)$. And we have found $e_JC^\da_{\rmd}e_J$ with a non-zero coefficient, so we conclude that $\cD\preceq_{\mathcal{L}} \cD'$ in $\dWJ$, as required.
\end{proof}

\paragraph{Example.} We take $W=S_6$ the symmetric group generated by $s_1,\dots,s_5$ and $W_J=S_2\times S_2\times S_2$ generated by $s_1,s_3,s_5$. Using the one-line notation for a permutation, here is a left cell in $S_6$:
\[\Gamma=\{231456,\ 132456,\ 142356,\ 152346,\ 162345\}\ .\]
Among these 5 elements, only two are in $X_{JJ}$ (one has to look for those elements with left and right descents disjoints from $\{1,3,5\}$) and the resulting cell in $\dWJ$ consists of the two double cosets of the following elements
\[\Gamma\cap X_{JJ}=\{132456,\ 152346\}\ .\]
In this example, the left cell $\Gamma$ corresponds to permutations having their right tableau in the RS correspondence equal to $\begin{array}{ccccc}
\fbox{\scriptsize{$1$}} & \hspace{-0.35cm}\fbox{\scriptsize{$2$}}& \hspace{-0.35cm}\fbox{\scriptsize{$4$}}& \hspace{-0.35cm}\fbox{\scriptsize{$5$}}& \hspace{-0.35cm}\fbox{\scriptsize{$6$}} \\[-0.2em]
\fbox{\scriptsize{$3$}}\end{array}$. The intersection with $X_{JJ}$ is not empty because we have chosen a tableau not containing $1,3,5$ in its descent. Actually, the intersection is made of those permutations having a left tableau which also does not contain $1,3,5$ in its descent. In type A, the description of the cells of $\dWJ$ in terms of the RS correspondence is simple and will be explicited in the next section.

\paragraph{Representations of $\HJ(W)$.}
We recall again that for any representation $V$ of $H(W)$, the vector space $e_J(V)$ carries naturally a representation of $\HJ(W)$, and moreover if $V$ is an irreducible $H(W)$-module then $e_J(V)$ (if non-zero) is an irreducible $\HJ(W)$-module, see for example \cite[\S 6.2]{Gre80}.

Let $\Gamma$ be a left cell for $W$ and $V^{\da}_{\Gamma}$ the associated representation of $H(W)$, with basis $\{C^{\da}_w+I_{\prec_{\mathcal{L}}\Gamma}\}_{w\in \Gamma}$. If non-empty, the subset $\Gamma\cap X_{JJ}$ indexes the elements in a left cell of $\dWJ$, thanks to the proposition above. We denote $V^\da_{\Gamma\cap X_{JJ}}$ the associated representation of $\HJ(W)$. 
\begin{prop}\label{prop-rep2}
Let $\Gamma$ be a left cell of $W$ such that $\Gamma\cap X_{JJ}\neq \emptyset$. As representations of $\HJ(W)$, we have 
$$V^{\da}_{\Gamma\cap X_{JJ}}=e_J(V^{\da}_{\Gamma})\ .$$
\end{prop}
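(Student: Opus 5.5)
We construct an explicit isomorphism of $\HJ(W)$-modules, mimicking the proof of Proposition \ref{prop-rep1}, with the roles of $\{C_w\}$ and $\widetilde{X}_{JJ}$ taken by $\{C^\da_w\}$ and $X_{JJ}$. We first use (\ref{descents}) to pin down the right descents on $\Gamma$. Since $\Gamma\cap X_{JJ}\neq\emptyset$, some element of $\Gamma$ has right descent disjoint from $J$. All elements of a left cell share the same right descent, so $\mathcal{R}(w)\cap J=\emptyset$ for every $w\in\Gamma$, and hence $\Gamma\cap X_{JJ}=\{w\in\Gamma\ ,\ \mathcal{L}(w)\cap J=\emptyset\}$. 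Identifying $\Gamma\cap X_{JJ}$ with a left cell of $\dWJ$ relies on Proposition \ref{prop-cell2b}, and we work under its hypotheses.

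The candidate map is
\[\Phi\ :\ V^\da_{\Gamma\cap X_{JJ}}\to e_J(V^\da_\Gamma)\,,\qquad e_JC^\da_{\rmd}e_J+(\text{lower})\ \mapsto\ e_JC^\da_{\rmd}+I^\da_{\prec_{\mathcal{L}}\Gamma}\,.\]
Since $\rmd\in\Gamma$, the element $e_JC^\da_{\rmd}$ lies in $I^\da_{\preceq_{\mathcal{L}}\Gamma}$, so $\Phi$ is well defined at the level of basis elements. Two facts about $e_J(V^\da_\Gamma)$ are needed.

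First, $e_J(V^\da_\Gamma)$ is spanned by the images of $e_JC^\da_w$ for $w\in\Gamma\cap X_{JJ}$. Indeed, if $w\in\Gamma$ has some $s\in J\cap\mathcal{L}(w)$, then $e_JC^\da_w=e_J\frac{1+qT_s}{1+q^2}C^\da_w=0$, exactly as in the kernel computation preceding Proposition \ref{prop-CT-par2}.

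Second, these images are linearly independent. We write $e_JC^\da_x=C^\da_x+\sum_{w\neq x}\alpha_wC^\da_w$, as in the proof of Proposition \ref{prop-cell2b}. Then any $w\neq x$ appearing with $w\in\Gamma$ must satisfy $e_JC^\da_we_J=0$, i.e. $w\notin X_{JJ}$. So modulo $I^\da_{\prec_{\mathcal{L}}\Gamma}$ the family $\{e_JC^\da_x\}_{x\in\Gamma\cap X_{JJ}}$ is unitriangular with respect to $\{C^\da_x\}_{x\in\Gamma\cap X_{JJ}}$ plus terms indexed by $\Gamma\setminus X_{JJ}$, and is therefore independent. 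Thus both sides have bases indexed by $\Gamma\cap X_{JJ}$, and $\Phi$ is a linear isomorphism.

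It remains to check that $\Phi$ is $\HJ(W)$-linear, and this is where the main obstacle lies, because $e_J$ does not fix $C^\da_w$. Let $h\in\HJ(W)$, so $h=e_Jhe_J$, and let $x\in\Gamma\cap X_{JJ}$. In $H(W)$ we write
\[hC^\da_x=\sum_{w\preceq_{\mathcal{L}}x}\beta_wC^\da_w\,.\]
Multiplying on the right by $e_J$ gives $h\,e_JC^\da_xe_J=\sum_{w\in\Gamma\cap X_{JJ}}\beta_we_JC^\da_we_J+(\text{terms from }w\prec_{\mathcal{L}}\Gamma)$, since $w\notin X_{JJ}$ contributes $0$ and we can take $h=e_Jh$. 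By Proposition \ref{prop-cell2a}, the terms with $w\prec_{\mathcal{L}}\Gamma$ give cells of $\dWJ$ strictly below the cell of $\Gamma\cap X_{JJ}$, so they vanish in $V^\da_{\Gamma\cap X_{JJ}}$. On the other side, $h\,e_JC^\da_x=hC^\da_x\equiv\sum_{w\in\Gamma}\beta_wC^\da_w$ modulo $I^\da_{\prec_{\mathcal{L}}\Gamma}$. Applying $e_J$ kills the terms with $w\in\Gamma\setminus X_{JJ}$, by the first fact above, and leaves $\sum_{w\in\Gamma\cap X_{JJ}}\beta_we_JC^\da_w$. Both actions therefore have the same matrix $(\beta_w)$, and $\Phi$ intertwines the actions.

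The delicate point is matching the lower ideals. In $V^\da_{\Gamma\cap X_{JJ}}$ we quotient by the $\dWJ$-cells strictly below, whereas in $V^\da_\Gamma$ we quotient by $I^\da_{\prec_{\mathcal{L}}\Gamma}$. We need that a double coset whose $\rmd$ lies in $\Gamma$ is not strictly below $\Gamma\cap X_{JJ}$ in $\dWJ$, which is exactly the equivalence (\ref{cellequiv2}) of Proposition \ref{prop-cell2b}. Conversely, a double coset lying in a cell strictly below $\Gamma\cap X_{JJ}$ in $\dWJ$ has $\rmd$ in a cell of $W$ that is $\preceq_{\mathcal{L}}\Gamma$ by Proposition \ref{prop-cell2a}, and not in $\Gamma$ by (\ref{cellequiv2}), so it lies in $I^\da_{\prec_{\mathcal{L}}\Gamma}$.
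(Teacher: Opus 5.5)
Your proof is correct and follows essentially the same route as the paper: use the common right descent on $\Gamma$ to reduce to $\Gamma\cap X_{JJ}$, show that $\{e_JC^{\da}_w+I^{\da}_{\prec_{\mathcal{L}}\Gamma}\}_{w\in\Gamma\cap X_{JJ}}$ is a basis of $e_J(V^{\da}_\Gamma)$, and compare the two actions through the same expansion of $hC^{\da}_x$, with Propositions~\ref{prop-cell2a} and~\ref{prop-cell2b} handling the lower ideals. The only variation is that you obtain linear independence from the coefficient computation in the proof of Proposition~\ref{prop-cell2b} (namely $e_JC^{\da}_x=C^{\da}_x+{}$terms indexed outside $X_{JJ}$), whereas the paper uses the basis $\{e_JC^{\da}_x\}_{x\in X_J^{-1}}$ of $e_JH(W)$; both arguments work.
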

\begin{proof}
Recall that $X_J^{-1}$ denotes the set of minimal-length representatives of right cosets in $W_J\backslash W$, or equivalently, the set of elements with left descent disjoint from $J$. First we note that the set of elements:
\begin{equation}\label{basis-PH}
\{e_JC^{\da}_x\}_{x\in X_J^{-1}}
\end{equation}
is a basis of the subspace $e_JH(W)$ (the image of $H(W)$ by left multiplication by $e_J$). Indeed, using the same arguments than before Proposition \ref{prop-CT-par2}, we see that $e_JC^{\da}_x=0$ if $x$ is not in $X_J^{-1}$. So the above set is a spanning set. The linear independence easily follows from the fact that $e_JC^{\da}_x$ decomposes in the standard basis using only elements $T_y$ with $y\leq w_Jx$, with an invertible coefficients in front of  $T_{w_Jx}$.

Next we deduce that the following subset is a basis of the subspace $e_J(V^{\da}_{\Gamma})$:
\begin{equation}\label{basis-rep2}
\{e_JC^{\da}_w+I^{\da}_{\prec_{\mathcal{L}}\Gamma}\}_{w\in \Gamma\cap X_J^{-1}}\ .
\end{equation}
Again we have that $e_JC^{\da}_w=0$ if $w\notin X_J^{-1}$, so that the elements in (\ref{basis-rep2}) span $e_J(V^{\da}_{\Gamma})$. Besides, a relation of linear dependency between these elements would contradict the linear independence of the basis elements of $e_JH(W)$ described above.

Since $\Gamma\cap X_{JJ}\neq \emptyset$ (the element $\rmd$ is in here), it means that there is at least one element in $\Gamma$ with right descent disjoint from $J$. Since the right descents of all elements in the same left cell coincide, this means that all elements in $\Gamma$ have their right descents disjoint from $J$. In this case, the basis above of $e_J(V^{\da}_{\Gamma})$ becomes:
\begin{equation}\label{basiscellrepproof}
\{e_JC^{\da}_w+I^{\da}_{\prec_{\mathcal{L}}\Gamma}\}_{w\in \Gamma\cap X_{JJ}}\ .
\end{equation}
This identifies naturally with the defining basis $\{e_JC^{\da}_we_J+I^{\da}_{\prec_{\mathcal{L}}\Gamma\cap X_{JJ}}\}_{w\in \Gamma\cap X_{JJ}}$ of the cell representation $V^{\da}_{\Gamma\cap X_{JJ}}$ of $\HJ(W)$. It remains to check that the actions of $\HJ(W)$ are the same.

Let $h\in\HJ(W)$. We write, for $w\in \Gamma\cap X_{JJ}$,
\[he_JC_w^\da=hC^{\da}_w=\sum_{w'\in\Gamma}\alpha_{w'}C^\da_{w'}+\sum_{y\prec_{\mathcal{L}}\Gamma}\alpha_yC^\da_y\,.\]
The second sum is in $I^{\da}_{\prec_{\mathcal{L}}\Gamma}$ and therefore the coefficients $\alpha_{w'}$ give the action of $h$ on the basis (\ref{basiscellrepproof}). Now multiply this equality by $e_J$ from left and right to get
\[he_JC_w^\da e_J=\sum_{w'\in\Gamma\cap X_{JJ}}\alpha_{w'}e_JC^\da_{w'}e_J+\sum_{\substack{y\prec_{\mathcal{L}}\Gamma\\ y\in X_{JJ}}}\alpha_ye_JC^\da_ye_J\,,\]
where we have been using again that $e_JC^\da_xe_J=0$ when $x\notin X_{JJ}$. The second sum is clearly in $I^{\da}_{\prec_{\mathcal{L}}\Gamma\cap X_{JJ}}$ by construction of this ideal, and therefore the coefficients $\alpha_{w'}$ also give the action of $h$ in the representation $V^{\da}_{\Gamma\cap X_{JJ}}$ of $\HJ(W)$.
\end{proof}

\section{Parabolic Hecke algebras in type A}\label{sec-typeA}

\subsection{Notations}

From now on, using Notations from Section \ref{subsec_preltypeA}, we take $W=S_n$ the symmetric group on $n$ letters with generators $s_i=(i,i+1)$, $i=1,\dots,n-1$. The associated Hecke algebra is denoted $H(S_n)$ and its generators are $T_1,\dots,T_{n-1}$.

\paragraph{Parabolic subgroups $S_{\mu}$.} All parabolic subgroups of $S_n$ are obtained as
\[W_J=S_{\mu}=S_{\mu_1}\times\dots\times S_{\mu_d}\ ,\]
where $d>1$ and $\mu=(\mu_1,\dots,\mu_d)\in\mathbb{Z}_{>0}$ such that $\mu_1+\dots+\mu_d=n$. 

From now on we fix such an integer $d$ and such a composition $\mu$ (with $d$ non-zero parts). The parabolic subgroup $S_{\mu}$ is naturally embedded in $S_n$ and is the parabolic subgroup corresponding to the following subset of simple transpositions:
\begin{equation}\label{def-Jmu}
J=\{1,\dots,\mu_1-1,\ \ \ \mu_1+1,\dots,\mu_1+\mu_2-1,\ \ \ \dots\dots\}\ ,
\end{equation}
where we have identified a generator $s_i$ with its index. In other words, to get $J$, we remove from $\{1,\dots,n-1\}$ the indices $\mu_1,\mu_1+\mu_2,\dots,\mu_1+\dots+\mu_{d-1}$.

\paragraph{Parabolic Hecke algebra $\Hmu$.}
In the Hecke algebra $H(S_n)$, the subalgebra generated by the subset $T_s$, with $s\in J$, is isomorphic in this case to $H(S_{\mu_1})\otimes\dots\otimes H(S_{\mu_d})$. In each subalgebra $H(S_k)$, we have the $q$-symmetriser (normalised to be an idempotent) which is:
\[e_k=\frac{1}{\sum_{w\in S_k}q^{2\ell(w)}}\sum_{w\in S_k}q^{\ell(w)}T_w\ .\]
The idempotent in $H(S_n)$ corresponding to the choice of the composition $\mu$ is:
\begin{equation}\label{P-typeA}
e_{\mu}=e_{\mu_1}\otimes\dots\otimes e_{\mu_d}=\frac{1}{\sum_{w\in S_{\mu}}q^{2\ell(w)}}\sum_{w\in S_{\mu}}q^{\ell(w)}T_w\ .
\end{equation}
The Poincar\'e polynomial of $S_n$ satisfies : $\sum_{w\in S_{n}}q^{2\ell(w)}=\prod_{a=1}^n(1+q^2+\dots+q^{2(a-1)})$. So here the normalizing factor in $e_{\mu}$ is
\[\prod_{i=1}^d\bigl(\sum_{w\in S_{k_i}}q^{2\ell(w)}\bigr)=\prod_{i=1}^d[k_i]_q!\]
where we have set $[n]_q!=[2]_q[3]_q\dots[n]_q$ and $[m]_q=\displaystyle\frac{1+q^{2m}}{1+q^2}$ for any integer $m$. So the ground ring for the parabolic Hecke algebra is in this case:
\begin{equation}\label{def-ring}A=\mathbb{Z}[q,q^{-1},([K]_q!)^{-1}]\,,\ \ \ \ \text{where $K=\text{Max}\{\mu_1,\dots,\mu_d\}$.}\end{equation}
We repeat the definition, for fixing the notations, of the parabolic Hecke algebra of type A.
\begin{defi}
We denote $\Hmu$ the parabolic Hecke algebra associated to $W=S_n$ and $J$ as above. It is the following subalgebra of $H(S_n)$, defined over $A$ in (\ref{def-ring}),
\[\Hmu=e_{\mu}H(S_n)e_{\mu}\,,\]
where $e_{\mu}$ is the idempotent defined in (\ref{P-typeA}).
\end{defi}
A diagrammatic, braid-like, description of the algebra $\Hmu$ was described in \cite{CP23} and the algebra was called the ``fused Hecke algebra''.

\begin{rema}\label{rem-spec}
In $A$ are invertible all polynomials of the form $(1+q^2+\dots+q^{2(k-1)})$, with $k$ smaller or equal to some $\mu_i$. The possible specializations of $q$ to, say, a complex number are those such that $q^2$ is not a root of unity whose order is between $2$ and $K=\text{Max}\{\mu_1,\dots,\mu_d\}$.
\end{rema}

\paragraph{Standard basis of $\Hmu$.} The standard basis $\{T_{\cD}\}$ of the parabolic Hecke algebra $\Hmu$ is indexed by double cosets of $S_n$ by the Young subgroup $S_{\mu}$, see Section \ref{sec_para}. These double cosets are naturally in bijection with the following sets of objects:
\begin{itemize}
\item Diagrams generalising the usual diagrams for permutations. They connect two lines of $n$ dots. The $i$-th dot of each line has $\mu_i$ edges attached to it.
\item $n\times n$ matrices with non-negative integer entries such that the sum of the entries in the $i$-th row is equal to the sum of the entries in the $i$-th column and is equal to $\mu_i$.
\end{itemize}
We refer to \cite{CP23,PdA20} for more details.

\subsection{Cells in $\dSmu$ and RSK correspondences}\label{subsec-RSK}

\subsubsection{Two RSK correspondences}

We will use the following map from $\{1,\dots,n\}$ to $\{1,\dots,d\}$:
\begin{equation}\label{map-mu}
\left\{\begin{array}{l}1 \\ \vdots \\ \mu_1 \end{array}\right.\mapsto 1\,,\ \ \ \ \ \left\{\begin{array}{l} \mu_1+1\\ \vdots\\ \mu_1+\mu_2\end{array}\right.\mapsto 2\,,\ \ \ \ \dots\ \ \ \ \left\{\begin{array}{l} \mu_1+\dots+\mu_{d-1}+1\\ \vdots\\ n\end{array}\right. \mapsto d\,,
\end{equation}
so that the first $\mu_1$ integers are replaced by $1$, the next $\mu_2$ integers are replaced by $2$, and so on.

Recall that a semistandard Young tableau of shape $\lambda$ and of weight $\mu$ is a filling of the Young diagram of $\lambda$ with integers, such that the integer $i$ appears $\mu_i$ times, and the filling is weakly increasing along the rows and strictly increasing along the columns. We denote:
\[\SSTab(\lambda,\mu)=\{\text{semistandard Young tableaux of shape $\lambda$ and weight $\mu$}\}\ .\]
\begin{defi}
For a standard Young tableau $\bT\in \STab(\lambda)$, we denote by $\overline{\bT}$ the tableau obtained from $\bT$ by applying the map (\ref{map-mu}) to all the entries.
\end{defi}
The resulting tableau $\overline{\bT}$ is sometimes a semistandard Young tableau in $\SSTab(\lambda,\mu)$. In fact, it happens exactly when the standard Young tableau $\bT$ does not contain in its descent any element of $J$ (the subset in (\ref{def-Jmu}) defining the subgroup $S_{\mu}$). We recall that:
\begin{equation}\label{Kostka}
\SSTab(\lambda,\mu)\neq\emptyset\ \ \ \quad\Leftrightarrow\ \ \ \quad\lambda\geq\mu^{ord}\ ,
\end{equation}
where $\mu^{ord}$ is the partition obtained from $\mu$ by ordering the parts in decreasing order. This can be proved combinatorially (see \cite{Sta99}).

For what follows, recall from Section \ref{sec_prel} that the usual Robinson--Schensted correspondence for permutations in $S_n$ is denoted:
\begin{equation}\label{RSagain}
\begin{array}{rcl}
S_n & \longleftrightarrow & \displaystyle\bigsqcup_{\lambda\vdash n} \STab(\lambda)^2\\[1em]
w & \longleftrightarrow & \bigl(P(w),Q(w)\bigr)
\end{array}\ .
\end{equation}

\paragraph{The Robinson--Schensted--Knuth correspondence \cite{Knu70}.}
We are ready to describe the procedure giving a pair of elements in $\SSTab(\lambda,\mu)$, for some $\lambda$, starting from a double coset $\cD\in\dSmu$. Schematically, we apply the following procedure:
\[\cD\ \leftrightarrow\ \rmd=w\ \longleftrightarrow\ \bigl(P(w),Q(w)\bigr)\ \leftrightarrow\ \bigl(\overline{P(w)},\overline{Q(w)}\bigr)\ .\]
Since $w= \rmd$ is a minimal-length coset representative, it does not contain any element of $J$ in its left or its right descent. So both $P(w)$ and $Q(w)$ are such that their images by the map $\overline{\cdot}$ are semistandard Young tableaux. We denote the resulting bijection by:
\begin{equation}\label{RSK1}
\begin{array}{rcl}
\dSmu & \leftrightarrow & \displaystyle\bigsqcup_{\lambda\vdash n} \SSTab(\lambda,\mu)^2\\[1.2em]
\cD & \leftrightarrow & \bigl(P(\cD),Q(\cD)\bigr)
\end{array}
\end{equation}
As recalled in \eqref{Kostka}, only shapes $\lambda$ such that $\lambda\geq\mu^{ord}$ give non-empty sets of semistandard tableaux.
\begin{rema}\label{rema-RSK}
Take a  double coset $\cD\in\dSmu$ and take its minimal representative $\rmd$. Write it with the two-line notation for permutations and then apply the map (\ref{map-mu}) to all the entries. This results in a two-line array of integers, arranged in increasing lexicographic order, with $\mu_i$ entries equal to $i$ in each line. In \cite{Knu70}, the bijection (\ref{RSK1}) is described directly from these arrays of integers. It is easy to see that the two descriptions are equivalent.
\end{rema}

\paragraph{Another correspondence.} We will use another natural bijection alternative to (\ref{RSK1}). Schematically, it goes as follows:
\[\cD\ \leftrightarrow\ \rpd=w\ \longleftrightarrow\ \bigl(P(w),Q(w)\bigr)\ \leftrightarrow\ \bigl(\overline{P(w)^t},\overline{Q(w)^t}\bigr)\ .\]
Since $w= \rpd$ is a maximal-length coset representative, it contains every element of $J$ in its left and its right descent. So both $P(w)$ and $Q(w)$ also contain $J$ in their descent. This means that their transposed tableaux (exchanging lines and columns) are such that their images by $\overline{\cdot}$ are semistandard Young tableaux. We denote the resulting bijection by:
\begin{equation}\label{RSK2}
\begin{array}{rcl}
\dSmu & \leftrightarrow & \displaystyle\bigsqcup_{\lambda\vdash n} \SSTab(\lambda,\mu)^2\\[1.2em]
\cD & \leftrightarrow & \bigl(\tP(\cD),\tQ(\cD)\bigr)
\end{array}
\end{equation}
Again, see \eqref{Kostka}, only shapes $\lambda$ such that $\lambda\geq\mu^{ord}$ give non-empty sets of semistandard tableaux.
\begin{exam}\label{exam_RSK} We consider the parabolic subgroup $S_2\times S_1\times S_1$ inside $S_4$. There are $7$ double cosets. Here we give for each double coset the minimal representative and the corresponding pair of semistandard tableaux for the RSK correspondence (\ref{RSK1}). We do not repeat the second tableau when it coincides with the first:
\[\begin{array}{|c|c|c|c|c|c|c|c|}
\hline  & \cD_1 & \cD_2& \cD_3& \cD_4& \cD_5& \cD_6& \cD_7\\
\hline \rmd & e & s_2 & s_3 & s_2s_3 & s_3s_2 & s_2s_3s_2 & s_2s_1s_3s_2 \\
\hline \bigl(P(\cD),Q(\cD)\bigr) &  (\,{\scriptstyle{1123}}\,,\,\cdot\,) 
& 
(\!\!\begin{array}{l}
\scriptstyle{113} \\[-0.4em]
\scriptstyle{2}
\end{array}\hspace{-0.1cm},\,\cdot\,)
&
(\!\!\begin{array}{l}
\scriptstyle{112} \\[-0.4em]
\scriptstyle{3}
\end{array}\hspace{-0.1cm},\,\cdot\,)
&
(\!\!\begin{array}{l}
\scriptstyle{113} \\[-0.4em]
\scriptstyle{2}
\end{array}\hspace{-0.1cm},\hspace{-0.1cm}\begin{array}{l}
\scriptstyle{112} \\[-0.4em]
\scriptstyle{3}
\end{array}\!\!)
&
(\!\!\begin{array}{l}
\scriptstyle{112} \\[-0.4em]
\scriptstyle{3}
\end{array}\hspace{-0.1cm},\hspace{-0.1cm}\begin{array}{l}
\scriptstyle{113} \\[-0.4em]
\scriptstyle{2}
\end{array}\!\!)
&
(\!\!\begin{array}{l}
\scriptstyle{11} \\[-0.4em]
\scriptstyle{2}\\[-0.4em]
\scriptstyle{3}
\end{array}\hspace{-0.1cm},\,\cdot\,)
&
(\!\!\begin{array}{l}
\scriptstyle{11} \\[-0.4em]
\scriptstyle{23}
\end{array}\hspace{-0.1cm},\,\cdot\,)\\
\hline
\end{array}\]
Then, with the same convention and numbering of the cosets, we give the maximal representative and the corresponding pair of semistandard tableaux for the other correspondence (\ref{RSK2}):
\[\begin{array}{|c|c|c|c|c|c|c|c|}
\hline  & \cD_1 & \cD_2& \cD_3& \cD_4& \cD_5& \cD_6& \cD_7\\
\hline \rpd & s_1 & s_1s_2s_1 & s_1s_3 & s_1s_2s_3s_1 & s_1s_3s_2s_1 & s_1s_2s_3s_2s_1 & s_1s_2s_1s_3s_2s_1 \\
\hline \bigl(P(\cD),Q(\cD)\bigr) & (\!\!\begin{array}{l}
\scriptstyle{11} \\[-0.4em]
\scriptstyle{2}\\[-0.4em]
\scriptstyle{3}
\end{array}\hspace{-0.1cm},\,\cdot\,)
& 
(\!\!\begin{array}{l}
\scriptstyle{112} \\[-0.4em]
\scriptstyle{3}
\end{array}\hspace{-0.1cm},\,\cdot\,)
&
(\!\!\begin{array}{l}
\scriptstyle{11} \\[-0.4em]
\scriptstyle{23}
\end{array}\hspace{-0.1cm},\,\cdot\,)

&
(\!\!\begin{array}{l}
\scriptstyle{112} \\[-0.4em]
\scriptstyle{3}
\end{array}\hspace{-0.1cm},\hspace{-0.1cm}\begin{array}{l}
\scriptstyle{113} \\[-0.4em]
\scriptstyle{2}
\end{array}\!\!)
&
(\!\!\begin{array}{l}
\scriptstyle{113} \\[-0.4em]
\scriptstyle{2}
\end{array}\hspace{-0.1cm},\hspace{-0.1cm}\begin{array}{l}
\scriptstyle{112} \\[-0.4em]
\scriptstyle{3}
\end{array}\!\!)
&
(\!\!\begin{array}{l}
\scriptstyle{113} \\[-0.4em]
\scriptstyle{2}
\end{array}\hspace{-0.1cm},\,\cdot\,)
&
 (\,{\scriptstyle{1123}}\,,\,\cdot\,) \\
\hline
\end{array}\]
\end{exam}

\subsubsection{Cells in $\dSmu$ and RSK correspondences}

\paragraph{The first basis of $\Hmu$.} In this paragraph, we use the first Kazhdan--Lusztig basis 
$$\{C_{\rpd}\}_{\cD\in\dSmu}$$
of $\Hmu$ studied in Section \ref{subsec_KLbasispar1} to define orders and cells on $\dSmu$. From the results proved in Section \ref{sec_KLpar}, in particular Proposition \ref{prop-cell1}, we can immediately describe the cells in $\dSmu$ associated to this basis $\{C_{\rpd}\}_{\cD\in\dSmu}$, since they are simply the intersection of the cells of $S_n$ with the subset of maximal-length representatives. 

Almost by construction, the relevant RSK correspondence here is the second one (\ref{RSK2}), which uses the maximal-length representatives. The notation is that to a double coset $\cD$ is associated a pair $\bigl(\tP(\cD),\tQ(\cD)\bigr)$ of semistandard Young tableaux of the same shape. We will denote $\tsh(\cD)$ this common shape.
\begin{coro}\label{prop-cells-Hmu1}We have:
\begin{itemize}
\item $\cD$ and $\cD'$ are in the same $\left\{\begin{array}{l} \text{left}\\[0.3em] \text{right}\\[0.3em] \text{2-sided} \end{array}\right.$ cell of $\dSmu$ if and only if $\left\{\begin{array}{l} \tQ(\cD)=\tQ(\cD')\\[0.3em] \tP(\cD)=\tP(\cD')\\[0.3em] \tsh(\cD)=\tsh(\cD') \end{array}\right.$;
\item $\cD\preceq_{\mathcal{LR}}\cD'$ if and only if $\tsh(\cD)\geq \tsh(\cD')$.
\end{itemize}
\end{coro}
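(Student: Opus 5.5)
Via Proposition \ref{prop-cell1}, the orders and cells on $\dSmu$ for the basis $\{C_{\rpd}\}$ are the restrictions to $\widetilde{X}_{JJ}$ of the Kazhdan--Lusztig orders and cells of $S_n$ for $\{C_w\}$. We then transport the classical RS description of cells in $S_n$ (Section \ref{subsec_preltypeA}) through the correspondence (\ref{RSK2}). For $w=\rpd$ we have $\tP(\cD)=\overline{P(w)^t}$ and $\tQ(\cD)=\overline{Q(w)^t}$.

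\textbf{Cells.} Let $w=\rpd$ and $w'=\rpdp$. By Proposition \ref{prop-cell1}, $\cD\sim_{\mathcal{L}}\cD'$ exactly when $w\sim_{\mathcal{L}}w'$ in $S_n$, and by the type A description this holds exactly when $Q(w)=Q(w')$. It therefore suffices to show that, for standard tableaux $\bT,\bT'$ both containing $J$ in their descent, $\bT=\bT'$ if and only if $\overline{\bT^t}=\overline{\bT'^t}$. Only the backward direction needs an argument. Transposition is a bijection, and $\bT^t$, $\bT'^t$ have no element of $J$ in their descent. The map $\bT\mapsto\overline{\bT}$ is injective on standard tableaux with no descent in $J$: the standard tableau is recovered from the semistandard one by numbering the boxes with entry $i$ from left to right by $\mu_1+\dots+\mu_{i-1}+1,\dots$. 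This standardization is exactly what makes (\ref{RSK2}) a bijection, as stated in the paper. The same argument handles right cells using $P$. For two-sided cells, $sh(w)=sh(w')$ if and only if $sh(w)^t=sh(w')^t$, and $\tsh(\cD)=sh(w)^t$.

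\textbf{Order.} Proposition \ref{prop-cell1} gives $\cD\preceq_{\mathcal{LR}}\cD'$ if and only if $w\preceq_{\mathcal{LR}}w'$ in $S_n$. By the type A description this is equivalent to $sh(w)\leq sh(w')$ in the dominance order. Transposition reverses dominance, so this is equivalent to $sh(w)^t\geq sh(w')^t$, that is, $\tsh(\cD)\geq\tsh(\cD')$.

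The only delicate point is that the classical statement ``$w\preceq_{\mathcal{LR}}w'$ iff $sh(w)\le sh(w')$'' must be available in the form we use. Proposition \ref{prop-cell1} is an equivalence for the preorder itself, not only for the cells, so no extra argument is required beyond the recalled fact and the order-reversing property of transposition. I expect no genuine obstacle. The only care needed is keeping track of the transpose, which reverses the inequality relative to the usual type A statement.
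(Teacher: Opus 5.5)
Your proposal is correct and follows essentially the paper's route. Like the paper, you combine Proposition \ref{prop-cell1} with the classical RS description of the cells and of $\preceq_{\mathcal{LR}}$ in $S_n$, transport it through (\ref{RSK2}), and let the transpose account for the reversal of the dominance order; you also make explicit the injectivity of $\bT\mapsto\overline{\bT^t}$ on standard tableaux with $J$ in their descent, which the paper takes for granted when it calls (\ref{RSK2}) a bijection.
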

Note that the two-sided cells in $\dSmu$ are indexed by partitions $\lambda$ such that $\lambda\geq\mu^{ord}$, see the sentence after \eqref{RSK2}. The two-sided order $\preceq_{\mathcal{LR}}$ is given in the second item by the reverse dominance order since we needed to transpose the tableaux in (\ref{RSK2}) in order to define $\tP(\cD)$ and $\tQ(\cD)$, and the dominance order is reversed by transposition.

\paragraph{The second basis of $\Hmu$.} In this paragraph, we use the second basis 
$$\{e_JC^{\da}_{\rmd}e_J\}_{\cD\in\dSmu}$$
of $\Hmu$ studied in Section \ref{subsec_KLbasispar2} to define orders and cells on $\dSmu$. As already recalled in Section \ref{sec_prel}, the left cell modules of $H(S_n)$ are all irreducible over $\mathbb{C}(q)$. So we can use Proposition \ref{prop-cell2b} from Section \ref{sec_KLpar} (and everything is similar for right cell modules). Therefore we can immediately describe the cells associated to this basis since they are simply the intersection of the cells of $S_n$ with the subset of minimal-length representatives.

Almost by construction, the relevant RSK correspondence here is the first one (\ref{RSK1}), which uses the minimal-length representatives. The notation is that to a double coset $\cD$ is associated a pair $\bigl(P(\cD),Q(\cD)\bigr)$  of semistandard Young tableaux of the same shape. We will denote $sh(\cD)$ this common shape.
\begin{prop}\label{prop-cells-Hmu2}We have
\begin{itemize}
\item $\cD$ and $\cD'$ are in the same $\left\{\begin{array}{l} \text{left}\\[0.3em] \text{right}\\[0.3em] \text{2-sided} \end{array}\right.$ cell of $\dSmu$ if and only if $\left\{\begin{array}{l} Q(\cD)=Q(\cD')\\[0.3em] P(\cD)=P(\cD')\\[0.3em] sh(\cD)=sh(\cD') \end{array}\right.$;
\item If $\cD\preceq_{\mathcal{LR}}\cD'$ then $sh(\cD)\leq sh(\cD')$;
\end{itemize}
\end{prop}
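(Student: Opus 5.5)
The plan is to transport the known description of cells of $S_n$ (recalled in Section \ref{subsec_preltypeA}) through Propositions \ref{prop-cell2a} and \ref{prop-cell2b}, and then translate it into the language of the RSK correspondence (\ref{RSK1}). Since all left (and right) cell modules $V^\da_\Gamma$ of $H(S_n)$ are irreducible over $\mathbb{C}(q)$, Proposition \ref{prop-cell2b} applies. It gives, for $\cD,\cD'\in\dSmu$,
\[\cD\sim_{\mathcal{L}}\cD'\ \text{in}\ \dSmu\ \Longleftrightarrow\ \rmd\sim_{\mathcal{L}}\rmdp\ \text{in}\ S_n .\]
The analogous statement for right cells is obtained by the mirror-image argument. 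Alternatively, one can apply the anti-automorphism $T_w\mapsto T_{w^{-1}}$. It fixes $e_\mu$, sends $C^\da_w$ to $C^\da_{w^{-1}}$, and preserves $X_{JJ}$, so it exchanges left and right cells on both sides.

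For the left cells, the type A description gives $\rmd\sim_{\mathcal{L}}\rmdp$ if and only if $Q(\rmd)=Q(\rmdp)$. It remains to check that $Q(\rmd)=Q(\rmdp)$ if and only if $Q(\cD)=Q(\cD')$, i.e.\ if and only if $\overline{Q(\rmd)}=\overline{Q(\rmdp)}$. One direction is trivial. For the converse, I will use that the map $\bT\mapsto\overline{\bT}$ is injective on standard tableaux whose descent set avoids $J$. Indeed, given a semistandard tableau of weight $\mu$, the standard tableau is recovered by standardisation: inside each block of equal letters $i$, the entries $\mu_1+\dots+\mu_{i-1}+1,\dots$ must be placed from left to right. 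This is forced because consecutive entries $j,j+1$ with $j\in J$ must have $j+1$ weakly above and strictly right of $j$, as $j$ is not a descent. This injectivity is essentially the statement that (\ref{RSK1}) is a bijection, which we take as given. The right cells are handled identically with $P$.

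For two-sided cells, I will use that the two-sided cells are generated by the left and right relations. On $S_n$, $sh(w)=sh(w')$ holds exactly when $w$ and $w'$ are connected through left and right cells. The direction "same two-sided cell implies same shape" then follows from Proposition \ref{prop-cell2a} applied to $\mathcal{LR}$ in both directions, together with the type A fact that $\sim_{\mathcal{LR}}$ means equal shape.

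For the converse, suppose $sh(\cD)=sh(\cD')=\lambda$. I pick the double coset $\cD''$ with $(P(\cD''),Q(\cD''))=(P(\cD),Q(\cD'))$, which exists because (\ref{RSK1}) is a bijection onto pairs of semistandard tableaux of the same shape. Then $\cD''\sim_{\mathcal{R}}\cD$ and $\cD''\sim_{\mathcal{L}}\cD'$ by the first two cases, hence $\cD\sim_{\mathcal{LR}}\cD'$.

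The second item follows from Proposition \ref{prop-cell2a} for $\mathcal{LR}$. If $\cD\preceq_{\mathcal{LR}}\cD'$, then $\rmd\preceq_{\mathcal{LR}}\rmdp$ in $S_n$. The type A description then gives $sh(\rmd)\leq sh(\rmdp)$, which is $sh(\cD)\leq sh(\cD')$ since the shape is unchanged by the map $\overline{\cdot}$.

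The main obstacle is the injectivity of $\overline{\cdot}$ on tableaux with descents avoiding $J$, though this is really standard. The only other delicate point is justifying the right-cell version of Proposition \ref{prop-cell2b}, which was stated only for left cells. I expect the anti-automorphism argument to settle this cleanly.
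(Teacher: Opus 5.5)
Your proposal is correct and follows the paper's route: the paper invokes Proposition~\ref{prop-cell2b} (valid because the type A cell modules are irreducible over $\mathbb{C}(q)$), says the right-cell case is similar, translates through the correspondence (\ref{RSK1}), and obtains the order statement from Proposition~\ref{prop-cell2a}. You also make explicit several steps the paper leaves implicit, and these are sound: the right-cell case via the anti-automorphism $\iota$, the injectivity of $\overline{\,\cdot\,}$ on standard tableaux whose descents avoid $J$, and the two-sided case via the intermediate coset $\cD''$ with tableaux $(P(\cD),Q(\cD'))$.
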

Note again that the two-sided cells are indexed by partitions $\lambda$ such that $\lambda\geq\mu^{ord}$, see the sentence after \eqref{RSK1}. This time, since we do not have a general result for the cell orders (see Proposition \ref{prop-cell2a}), we only know that the $\mathcal{LR}$-order is weaker than the dominance order on partitions.

\begin{exam}
The partition of the set $\dSmu$ into cells is different depending on which of the two bases we consider. For example, for $n=4$ and $\mu=(2,1,1)$, there are 7 cosets denoted $\cD_1,\dots,\cD_7$. There are 4 two-sided cells indexed by partitions $\lambda\geq \mu$. Builiding on the calculation of the RSK correspondences in Example \ref{exam_RSK}, we show here the partition into cells:
\[\begin{array}{llcc}
&& \text{First basis} & \text{Second basis}\\[0.4em]
\begin{tikzpicture}[scale=0.25]
\diag{0}{0}{4};
\end{tikzpicture}
 & : & \{\cD_7\} & \{\cD_1\} \\[0.4em]
\begin{tikzpicture}[scale=0.25]
\diagg{0}{0}{3}{1};
\end{tikzpicture}
 & : & \{\{\cD_2,\cD_5\},\{\cD_4,\cD_6\}\} & \{\{\cD_2,\cD_5\},\{\cD_3,\cD_4\}\} \\[0.4em]
\begin{tikzpicture}[scale=0.25]
\diagg{0}{0}{2}{2};
\end{tikzpicture}
 & : & \{\cD_3\} & \{\cD_7\} \\
\begin{tikzpicture}[scale=0.25]
\diaggg{0}{0}{2}{1}{1};
\end{tikzpicture}
 & : & \{\cD_1\} & \{\cD_6\}
\end{array}\]
On each line, we show the corresponding two-sided cells, with its decomposition into left cells.
\end{exam}

\subsection{Cells and representations of $\Hmu$}\label{subsec-rep}

Using the Kazhdan--Lusztig bases of $H(S_n)$, we discuss when the cell representations of $H(S_n)$ are killed or not by the idempotent $e_{\mu}$. Restricting to the semisimple case, we thus recover, with a different proof, the classification of irreducible representations of $\Hmu$ obtained in \cite{CP23}.

\subsubsection{Classification}

Recall that we have denoted $\{V_{\Gamma}\}$ the cell representations of $H(S_n)$ using the basis $\{C_w\}$, and $\{V^{\da}_{\Gamma}\}$ the cell representations of $H(S_n)$ using the basis $\{C^{\da}_w\}$.
\begin{theo}\label{theo-cellrep}
Let $\lambda\vdash n$ and let $\Gamma$ be a left cell of $S_n$ containing elements $w$ with $sh(w)=\lambda$. We have
$$e_{\mu}(V_{\Gamma})\neq 0\ \ \ \ \ \ \Leftrightarrow\ \ \ \ \ \ \ \ \lambda^{t}\geq\mu^{ord}\ ,$$
$$e_{\mu}(V^{\da}_{\Gamma})\neq 0\ \ \ \ \ \ \Leftrightarrow\ \ \ \ \ \ \ \ \lambda\geq\mu^{ord}\ ,$$
where $\mu^{ord}$ is the partition obtained from $\mu$ by ordering the parts in decreasing order.
\end{theo}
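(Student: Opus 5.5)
The plan is to reduce both equivalences to the combinatorial criterion (\ref{Kostka}), using the explicit bases of $e_\mu(V_\Gamma)$ and $e_\mu(V^\da_\Gamma)$ produced inside the proofs of Propositions \ref{prop-rep1} and \ref{prop-rep2}. Those proofs hold for an arbitrary left cell $\Gamma$, without assuming anything about $\Gamma\cap X_{JJ}$ or $\Gamma\cap\widetilde X_{JJ}$, and give the following. First, $e_\mu(V_\Gamma)$ has basis $\{C_w+I_{\prec_{\mathcal L}\Gamma}\}_{w\in\Gamma\cap\widetilde X_J^{-1}}$. Second, $e_\mu(V^\da_\Gamma)$ has basis $\{e_\mu C^\da_w+I^\da_{\prec_{\mathcal L}\Gamma}\}_{w\in\Gamma\cap X_J^{-1}}$. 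So $e_\mu(V_\Gamma)\neq0$ if and only if some $w\in\Gamma$ has $J\subseteq\mathcal L(w)$, and $e_\mu(V^\da_\Gamma)\neq 0$ if and only if some $w\in\Gamma$ has $\mathcal L(w)\cap J=\emptyset$.

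Next I translate these conditions through the RS correspondence. By Section \ref{subsec_preltypeA}, a left cell $\Gamma$ is the set of $w$ with fixed $Q(w)=Q_0$ of shape $\lambda$. As $w$ ranges over $\Gamma$, $P(w)$ ranges over all of $\STab(\lambda)$. Moreover, $s_i\in\mathcal L(w)$ exactly when $i$ lies in the descent of $P(w)$.

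For the second equivalence, I need a standard tableau $\bT$ of shape $\lambda$ with no element of $J$ in its descent. As noted after the definition of $\overline{\bT}$, these are exactly the $\bT$ with $\overline{\bT}\in\SSTab(\lambda,\mu)$. Conversely, every semistandard tableau of weight $\mu$ arises as $\overline{\bT}$ for its standardization $\bT$ (number the equal entries $i$ from left to right). Hence the condition holds iff $\SSTab(\lambda,\mu)\neq\emptyset$, which by (\ref{Kostka}) is equivalent to $\lambda\geq\mu^{ord}$.

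For the first equivalence, I need $\bT\in\STab(\lambda)$ with $J$ contained in its descent. Transposition exchanges descents and non-descents: $i$ is a descent of $\bT$ iff $i$ is not a descent of $\bT^t$, since $i+1$ lies strictly below $i$ iff it lies weakly above and strictly to the right, i.e.\ strictly to the right in $\bT^t$. So the condition is equivalent to the existence of $\bT'\in\STab(\lambda^t)$ with no element of $J$ in its descent. As above, this holds iff $\lambda^t\geq\mu^{ord}$.

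The only delicate points are checking carefully that the spanning/basis arguments in Propositions \ref{prop-rep1} and \ref{prop-rep2} did not use the nonemptiness hypothesis before the final identification step (they did not), and verifying the standardization bijection between tableaux without $J$-descents and $\SSTab(\lambda,\mu)$. Neither should pose a real obstacle.
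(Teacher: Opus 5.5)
Your proof is correct. For the second equivalence, and for the implication $\lambda^t\geq\mu^{ord}\Rightarrow e_\mu(V_\Gamma)\neq 0$, it follows the paper's own argument. In the first case this is the basis (\ref{basis-rep2}) from the proof of Proposition~\ref{prop-rep2}; in the second, an element $w\in\Gamma$ with $J\subseteq\mathcal{L}(w)$, so that $e_\mu C_w=C_w$. Both are combined with (\ref{Kostka}).

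Where you differ is the vanishing statement $\lambda^t\not\geq\mu^{ord}\Rightarrow e_\mu(V_\Gamma)=0$.
\begin{itemize}
\item The paper argues structurally. It notes that $e_\mu$ is proportional to $C_{w_\mu}$ and that $sh(w_\mu)=(\mu^{ord})^t$. By the two-sided cell order of $S_n$, $e_\mu C_w$ then only involves $C_x$ with $sh(x)\leq(\mu^{ord})^t$, which excludes shape $\lambda$.
\item You instead use the basis of $e_J(V_\Gamma)$ indexed by $\Gamma\cap\widetilde X_J^{-1}$, which the proof of Proposition~\ref{prop-rep1} establishes for an arbitrary left cell. You combine it with the fact that transposition exchanges descents and non-descents.
\end{itemize}
Your route makes the two equivalences exact mirror images of one another. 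It needs only the left-cell/RS description, not the order on two-sided cells. As a bonus it gives $\dim e_\mu(V_\Gamma)=|\SSTab(\lambda^t,\mu)|$ directly. The paper's route, in exchange, explains the vanishing conceptually: $e_\mu$ lies in a specific two-sided cell ideal.

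One small fix. Your justification of the transposition fact is garbled as written: it says ``$i+1$ lies strictly below $i$ iff it lies weakly above''. What you need is the dichotomy that in a standard tableau, $i+1$ is either strictly below and weakly left of $i$, or weakly above and strictly right of it. Transposition then swaps ``strictly below'' with ``strictly right''.
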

\begin{proof} We consider first the representation $V_{\Gamma}$. We prove that $e_{\mu}(V_{\Gamma})=0$ if  $\lambda^{t}\ngeq\mu^{ord}$. Indeed, the idempotent $e_{\mu}$ is proportional to the element $C_{w_{\mu}}$, where $w_{\mu}$ is the longest element of the parabolic subgroup $S_{\mu}$. It is easy to see that under the RS correspondence, $w_{\mu}$ is of shape $(\mu^{ord})^t$. When multiplying $C_{w_{\mu}}$ with any element and expanding in the $\{C_w\}$ basis, we find only elements $C_x$ with $sh(x)\leq(\mu^{ord})^t$. If $\lambda^{t}\ngeq\mu^{ord}$ then $\lambda\nleq(\mu^{ord})^t$, and we can never find elements $C_w$ with $sh(w)=\lambda$.

For the rest of the proof, we recall from (\ref{Kostka}) that
\begin{equation}\label{Kostkaagain}
\lambda\geq\mu^{ord}\ \ \ \quad\Leftrightarrow\ \ \ \quad\SSTab(\lambda,\mu)\neq\emptyset\ .
\end{equation}
Note then that the condition $\SSTab(\lambda,\mu)\neq\emptyset$ is equivalent to the existence of a standard tableau of shape $\lambda$ with descent disjoint from $J$. Indeed the map (\ref{map-mu}) applied to all entries provides a bijection between such standard tableaux and $\SSTab(\lambda,\mu)$. We must only check that from $\mathbf{T}\in\SSTab(\lambda,\mu)$, we can choose a preimage of $\mathbf{T}$ for the map (\ref{map-mu}) with descent disjoint from $J$. Since each set of boxes in $\mathbf{T}$ with the same entry, say $a\in\{1,\dots,d\}$, contains at most one box in each column, we can choose a preimage such that the numbers $\mu_1+\dots+\mu_{a-1}+1,\dots,\mu_1+\dots+\mu_a$ are placed strictly from left to right.

From what we just said, if $\lambda^{t}\geq\mu^{ord}$ then we can take a standard tableau of shape $\lambda^t$ with descent disjoint from $J$. Transposing, we obtain a standard tableau $\bT$ of shape $\lambda$ with descent containing $J$. Since $\Gamma$ is a left cell of $S_n$ containing elements of shape $\lambda$, there is an element $w$ in $\Gamma$ with $\bT$ as its left tableau under the RS correspondence. Therefore, $J$ is included in the left descent of $w$, and we thus have $e_{\mu}C_w=C_w$. This shows that $e_{\mu}(V_{\Gamma})\neq 0$.

The reasoning is quite similar for the representation $V^{\da}_{\Gamma}$. From the discussion above, the condition $\lambda\geq\mu^{ord}$ is equivalent to the existence of a standard tableau of shape $\lambda$ with descent disjoint from $J$, which is in turn equivalent to the existence of an element in $\Gamma$ with left descent disjoint from $J$. These elements in $\Gamma$ with left descent disjoint from $J$ index a basis of $e_J(V^{\da}_{\Gamma})$, as was shown during the proof of Proposition \ref{prop-rep2}, see (\ref{basis-rep2}). Thus if $\lambda\ngeq\mu^{ord}$, there is no such element in $\Gamma$ and we have $e_J(V^{\da}_{\Gamma})=0$ while otherwise there is such an element and we have $e_J(V^{\da}_{\Gamma})\neq0$.
\end{proof}

\paragraph{The semisimple situation.} In this paragraph, we consider the semisimple situation, namely the field $\mathbb{C}(q)$ or a non-zero complex number $q$ such that $q^2$ is not a root of unity whose order is between $2$ and $n$.

From the general results for idempotent subalgebras, (see \emph{e.g.} \cite[\S 6.2]{Gre80}) a complete set of pairwise distinct irreducible representations of $\Hmu$ are all the non-zero $e_{\mu}(V_{\lambda})$, where the $V_{\lambda}$'s are the irreducible representations of $H(S_n)$. The identification between the cell representations and the $V_{\lambda}$'s was recalled in Subsection \ref{subsec_preltypeA}. Using the previous theorem, we conclude immediately with the following corollary, which was proven in \cite{CP23} with other methods.
\begin{coro}\label{coro-classif}
A complete set of non-zero pairwise distinct irreducible representations of $\Hmu$ is:
\[\{e_{\mu}(V_{\lambda})\ \text{such that}\ \lambda\geq\mu^{ord}\}\ ,\]
where $\mu^{ord}$ is the partition obtained from $\mu$ by ordering the parts in decreasing order.
\end{coro}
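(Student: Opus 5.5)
The plan is to deduce Corollary~\ref{coro-classif} from the general theory of idempotent subalgebras combined with Theorem~\ref{theo-cellrep}. We work over $\mathbb{C}(q)$, or at a specialisation of $q$ as in the semisimple paragraph. There $H(S_n)$ is semisimple with irreducibles $\{V_\lambda\}_{\lambda\vdash n}$.

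First, I will invoke the standard result \cite[\S 6.2]{Gre80}. For an idempotent $e$ in a finite-dimensional algebra $B$, the functor $V\mapsto eV$ sends each simple $B$-module either to $0$ or to a simple $eBe$-module. The nonzero images are pairwise non-isomorphic, and every simple $eBe$-module arises this way. Applying this with $B=H(S_n)$ and $e=e_\mu$ shows that the non-zero modules $e_\mu(V_\lambda)$ form a complete set of pairwise distinct irreducible representations of $\Hmu$. Note that $e_\mu$ is defined in the semisimple setting, since the relevant $q$-factorials are invertible (Remark~\ref{rem-spec}).

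Second, it remains to decide for which $\lambda$ the module $e_\mu(V_\lambda)$ is non-zero. By Subsection~\ref{subsec_preltypeA}, for any left cell $\Gamma$ consisting of elements of shape $\lambda$, the cell module satisfies $V^{\da}_\Gamma\cong V_\lambda$. Such a cell exists for every $\lambda\vdash n$, because RS is a bijection. Since $e_\mu$ acts compatibly with module isomorphisms, $e_\mu(V_\lambda)\neq 0$ if and only if $e_\mu(V^\da_\Gamma)\neq0$. By the second equivalence of Theorem~\ref{theo-cellrep}, this happens exactly when $\lambda\geq\mu^{ord}$.

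One technical point needs care: Theorem~\ref{theo-cellrep} is stated for the integral cell modules over $A$, while here we use their extension of scalars. The basis (\ref{basis-rep2}) of $e_J(V^\da_\Gamma)$ is a free basis indexed by elements of $\Gamma$ with left descent disjoint from $J$. Hence the dimension of $e_\mu(V^\da_\Gamma)$ after base change equals the cardinality of that index set, and non-vanishing is preserved under specialisation. With this checked, the corollary follows. As a cross-check, the first equivalence of the theorem together with $V_\Gamma\cong V_{\lambda^t}$ gives the same condition, namely $e_\mu(V_{\lambda^t})\neq0$ if and only if $\lambda^t\geq\mu^{ord}$.
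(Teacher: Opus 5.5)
Your proposal is correct and follows the paper's argument: Green's result on idempotent subalgebras \cite[\S 6.2]{Gre80} shows that the non-zero $e_{\mu}(V_{\lambda})$ form a complete set of irreducibles, and the identification $V^{\da}_{\Gamma}\cong V_{\lambda}$ together with the second equivalence of Theorem~\ref{theo-cellrep} gives exactly the condition $\lambda\geq\mu^{ord}$. Your extra remark on base change, via the free basis (\ref{basis-rep2}) and the fact that $e_{\mu}$ splits off a direct summand, is a valid detail that the paper leaves implicit.
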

\begin{rema}
In the particular situation $\mu=(k,\dots,k)$ (a positive integer $k$ repeated $d$ times), then the condition $\lambda\geq\mu^{ord}$ is easily seen to be equivalent to the condition that the number of non-zero parts of $\lambda$ is less or equal to $d$ (which is only a necessary condition for general $\mu$). We refer to \cite{CP23} for more details.
\end{rema}

The semisimple representation theory of the algebra $\Hmu$ is concisely summarised by its Bratteli diagram. We refer to \cite{CP23}. For example, if $n=6$ and $\mu=(2,2,2)$, the Bratteli diagram is: 
\begin{center}
 \begin{tikzpicture}[scale=0.28]
\node at (0.5,4) {$\emptyset$};
\draw ( 0.5,3) -- (0.5, 1);
\diag{-0.5}{0}{2};\node at (-1.5,-0.5) {$1$};
\draw (-1,-1.5) -- (-6,-3.5);\draw (0.5,-1.5)--(0.5,-3.5);\draw (2,-1.5) -- (6,-3.5);
\diag{-8}{-4}{4};\node at (-9,-4.5) {$1$};\diagg{-1}{-4}{3}{1};\node at (-2,-5) {$1$};\diagg{5}{-4}{2}{2};\node at (4,-5) {$1$};

\draw (-8.5,-5.5) -- (-22,-8.5);\draw (-7.5,-5.5) -- (-13.5,-8.5);\draw (-6,-5.5) -- (-6,-8.5);    \draw (-1.5,-6) -- (-11,-8.5); \draw (-0.5,-6.5) -- (-5,-8.5);\draw (0.5,-6.5) -- (0.5,-8.5);
\draw (1.5,-6.5) -- (7,-8.5);\draw (2.5,-6) -- (11.5,-8.5);\draw (6,-6.5) -- (-4,-8.5); \draw (6.5,-6.5) -- (12.5,-8.5);\draw (7.5,-6) -- (18,-8.5);

\diag{-25}{-9}{6};\node at (-26,-9.5) {$1$};\diagg{-16}{-9}{5}{1};\node at (-17,-10) {$2$};\diagg{-8}{-9}{4}{2};\node at (-9,-10) {$3$};
\diagg{-1}{-9}{3}{3};\node at (-2,-10) {$1$};\diaggg{5}{-9}{4}{1}{1};\node at (4,-10.5) {$1$}; \diaggg{12}{-9}{3}{2}{1};\node at (11,-10.5) {$2$};\diaggg{18}{-9}{2}{2}{2};\node at (17,-10.5) {$1$};


\node at (-32,-0.5) {$n=1$};\node at (-32,-4.5) {$n=2$};\node at (-32,-9.5) {$n=3$};
\end{tikzpicture}
\end{center}
The rule is that we add two boxes at each step (since $\mu=(2,2,2)$) and two partitions $\lambda'\vdash n$ and $\lambda\vdash n+2$ are connected if $\lambda'\subset \lambda$
and moreover $\lambda/\lambda'$ does not contain two boxes in the same column. For example $\lambda=(2,2)$ is not connected to $\lambda'=(3,3)$.

The dimension of $e_{\mu}(V_{\lambda})$ is a Kostka number, that is, the following number of semistandard Young tableaux:
\[\dim e_{\mu}(V_{\lambda})=|\SSTab(\lambda,\mu)|\ .\]
It follows at once from the description of the cells of $\Hmu$ (see also \cite{CP23}).

\subsubsection{Examples}

\paragraph{Example: $\lambda=(n)$.} The representation is obviously of dimension 1. It is the restriction, or projection, to $\Hmu$ of the one-dimensional representation $T_w\mapsto q^{\ell(w)}$ of the Hecke algebra $H(S_n)$. 

In terms of the first basis of $\Hmu$, this partition is at the bottom of the cell order (reverse dominance ordering). The basis element corresponding to this cell is
\[C_{w_n}\,,\ \ \ \ \text{where $w_n$ is the longest element of $S_n$.}\]
It is indeed a simple matter to see that elements of $\Hmu$ acts as follows:
\[e_{\mu}T_we_{\mu}C_{w_n}=q^{\ell(w)}C_{w_n}\ .\]

In terms of the second basis of $\Hmu$, the partition $(n)$ is at the top of the cell order. The corresponding cell consists only of the trivial double coset and the corresponding basis element is
\[e_{\mu}C^{\da}_{e}e_{\mu}=e_{\mu}\ .\]
The action of $\Hmu$ is by left multiplication modulo all the basis elements different from $e_{\mu}$, and thus, in this picture, the action of the basis elements of $\Hmu$ is given as follows
\[e_{\mu}C_{w}^{\da}e_{\mu}\mapsto\left\{\begin{array}{ll}
1 & \text{if $w=e$,}\\[0.4em]
0 & \text{otherwise\,.}\end{array}\right.\]

\begin{rema}
Note that the other one-dimensional representation $T_w\mapsto (-q^{-1})^\ell(w)$ of $H(S_n)$ does not give a representation of $\Hmu$ since the projector $e_{\mu}$ is $0$ in this representation. However, there are other one-dimensional representations of $\Hmu$. One of them for $\lambda=\mu^{ord}$ is discussed below, and the other one (probably the most interesting because not at one extremity of the cell ordering) is for $\lambda$ a hook partition of maximal-length and will play a prominent role in the next section.
\end{rema}

\paragraph{Example: $\lambda=\mu^{ord}$.} This partition also corresponds to a one-dimensional representation of $\Hmu$ (even if the corresponding representation of $H(S_n)$ is not one-dimensional). To see this, recall that the dimension is the cardinality of $\SSTab(\lambda,\mu)$ and that this number does not depend on the ordering of the composition $\mu$ (see \cite[Theorem 7.10.2]{Sta99}). So for simplicity we assume that $\mu=\mu^{ord}$, that is $\mu=(\mu_1,\dots,\mu_d)$ with $\mu_1\geq \dots\geq\mu_d$. In this case, one easily sees that there is a single semistandard tableau in $\SSTab(\lambda,\mu)$ for $\lambda=\mu$. It is obviously obtained by filling the first line with $1$'s, the second line with $2$'s and so on.

For the first basis of $\Hmu$, the unique basis element corresponding to this cell is
\[C_{w_{\mu}}=e_{\mu}\,,\]
where $w_{\mu}$ is the longest element of the subgroup $S_{\mu}$ inside $S_n$. In this case, the partition $\mu$ is at the top of the cell order and the action of $\Hmu$ is by left multiplication modulo all the basis elements different from $e_{\mu}$. Thus, in this picture, the action of the basis elements of $\Hmu$ is given as follows
\[C_{w}\mapsto\left\{\begin{array}{ll}
1 & \text{if $w=w_{\mu}$,}\\[0.4em]
0 & \text{otherwise\,.}\end{array}\right.\]

For the second basis of $\Hmu$, the partition $\lambda=\mu$ is at the bottom of the cell order. The corrrsponding basis element is
\[e_{\mu}C^{\da}_{\tilde{w}_{\mu}}e_{\mu}\ ,\]
for some permutation $\tilde{w}_{\mu}$, which is a minimal-length representative corresponding through the RSK correspondence to the unique semistandard Young tableau in $\SSTab(\mu,\mu)$. It is straightforward to check that the permutation $\tilde{w}_{\mu}$ is given, visually, as follows:
\begin{center}
 \begin{tikzpicture}[scale=0.3]
\draw (0.8,0.5) -- (0.8,1) -- (9.2,1) -- (9.2,0.5);\node at (5,2) {$\mu_1$};
\fill (1,0) circle (0.2);
\fill (2,0) circle (0.2);
\node at (5,0) {$\dots$};
\draw (5.8,-0.5) -- (5.8,-1) -- (9.2,-1) -- (9.2,-0.5);

\fill (8,0)  circle (0.2);
\fill (9,0) circle (0.2);

\draw (9.8,0.5) -- (9.8,1) -- (16.2,1) -- (16.2,0.5);\node at (13,2) {$\mu_2$};
\draw (11.8,-0.5) -- (11.8,-1) -- (16.2,-1) -- (16.2,-0.5);
\fill (10,0) circle (0.2);
\fill (11,0) circle (0.2);
\node at (13,0) {$\dots$};
\fill (16,0) circle (0.2);

\node at (18.5,0) {$\dots$};

\node at (21,0) {$\dots$};

\node at (23.5,0) {$\dots$};

\node at (26,0) {$\dots$};

\draw (29.8,0.5) -- (29.8,1) -- (34.2,1) -- (34.2,0.5);\node at (32,2) {$\mu_{d-1}$};
\draw (29.8,-0.5) -- (29.8,-1) -- (34.2,-1) -- (34.2,-0.5);
\fill (30,0) circle (0.2);
\node at (32,0) {$\dots$};
\fill (34,0) circle (0.2);

\draw (34.8,0.5) -- (34.8,1) -- (38.2,1) -- (38.2,0.5);\node at (36,2) {$\mu_{d}$};
\draw (34.8,-0.5) -- (34.8,-1) -- (38.2,-1) -- (38.2,-0.5);
\fill (35,0) circle (0.2);
\node at (36.5,0) {$\dots$};
\fill (38,0) circle (0.2);

\draw[<->,thick] (7.5,-1.5) ..controls +(14,-3) .. (36.5,-1.5);
\draw[<->,thick] (15,-1.5) ..controls +(7,-2) .. (31,-1.5);

\node at (45,0) {.};
\end{tikzpicture}
\end{center}
It is the involution sending the last $\mu_d$ integers (preserving their order) to the last $\mu_d$ integers in the subset $\{1,\dots,\mu_1\}$, and then repeating the procedure to the remaining subsets of sizes $\mu_2,\dots,\mu_{d-1}$ as long as there remains at least two subsets. For example, if $\mu=(3,2,2)$ then $\tilde{w}_{\mu}=1674523$. 

Since $\mu$ is at the bottom of the cell order, we know that left multiplication of the basis element $e_{\mu}C^{\da}_{\tilde{w}_{\mu}}e_{\mu}$ by any $h\in\Hmu$ is proportional to the basis element:
\[he_{\mu}C^{\da}_{\tilde{w}_{\mu}}e_{\mu}=\alpha(h)e_{\mu}C^{\da}_{\tilde{w}_{\mu}}e_{\mu}\ .\] 
The coefficient $\alpha(h)$ is the value of the element $h$ in this one-dimensional representation.

\subsection{Cellular bases of $\Hmu$}\label{subsec-cellular}

\subsubsection{Cellularity of the Hecke algebra $H(S_n)$}

It is well-known \cite{Gec07,KL79} that combined with the RS correspondence, the Kazhdan--Lusztig bases $\{C_w\}_{w\in S_n}$ and $\{C^{\da}_w\}_{w\in S_n}$ become cellular bases of $H(S_n)$, thus providing the Hecke algebra $H(S_n)$ with a structure of a cellular algebra in the sense of \cite{GL96}.

We reindex the two Kazhdan--Lusztig bases, using the RS correspondence $w\leftrightarrow \bigl(P(w),Q(w)\bigr)$, defining 
\[C_{P(w),Q(w)}=C_w\qquad\ \text{and}\ \qquad C^{\da}_{P(w),Q(w)}=C^{\da}_w\ .\] 
We consider the antiautomorphism $\iota$ of order 2 of $H(S_n)$, which sends any generator $T_i$ to itself, which means that it is given on the standard basis by:
\[\iota\ :\ T_w\mapsto T_{w^{-1}}\ .\]
Then the two sets of elements:
\[\{C_{\textbf{s},\textbf{t}}\}_{\textbf{s},\textbf{t}}\,,\ \ \ \{C^{\da}_{\textbf{s},\textbf{t}}\}_{\textbf{s},\textbf{t}}\ \ \ \ \ \text{where $(\textbf{s},\textbf{t})$ runs over $\displaystyle\bigsqcup_{\lambda\vdash n} \STab(\lambda)^2$\ ,}\]
form two cellular bases of $H(S_n)$ with respect to the involution $\iota$ and to the poset of partitions of $n$ with the dominance order. To be precise, this means that:
\begin{itemize}
\item $\iota(C_{\textbf{s},\textbf{t}})=C_{\textbf{t},\textbf{s}}$ for any $\textbf{s},\textbf{t}$;
\item for all $h\in H(S_n)$ and $\textbf{s},\textbf{t}\in\STab(\lambda)$, we have 
$$hC_{\textbf{s},\textbf{t}}=\sum_{\textbf{s}'\in\STab(\lambda)} r_h(\textbf{s},\textbf{s}')C_{\textbf{s}',\textbf{t}}\ \text{mod}\,I_{<\lambda}\ \quad\text{and}\ \quad hC^{\da}_{\textbf{s},\textbf{t}}=\sum_{\textbf{s}'\in\STab(\lambda)} \tilde{r}_h(\textbf{s},\textbf{s}')C^{\da}_{\textbf{s}',\textbf{t}}\ \text{mod}\,I^{\da}_{<\lambda}\ .$$
\end{itemize}
where $r_h(\textbf{s},\textbf{s}')$ and $\tilde{r}_h(\textbf{s},\textbf{s}')$ are coefficients independent of $\textbf{t}$. Above, the ideal $I_{<\lambda}$ is the span of all elements $C_{\textbf{s},\textbf{t}}$ for standard tableaux $\textbf{s},\textbf{t}$ of shape strictly smaller than $\lambda$ in the dominance order. The ideal $I^\da_{<\lambda}$ is defined similarly using elements $C^{\da}_{\textbf{s},\textbf{t}}$.

\subsubsection{Two cellular bases of $\Hmu$}

It follows immediately from its definition (\ref{P-typeA}) that the projector $e_{\mu}$ is stable by the involution $\iota$. From a general property of cellular algebras (see \cite[Propostion 4.3]{KX98}), the stability of $e_{\mu}$ by $\iota$ ensures that the algebra $\Hmu=e_{\mu}H_ne_{\mu}$ is also cellular with respect to the same involution. Our goal to conclude this section is to make explicit the two cellular bases of $\Hmu$ that we obtain using the bases previously considered.

\paragraph{The poset.} We recall that a basis of $\Hmu$ is indexed by the double cosets $\dSmu$, and that these double cosets are in bijection with a certain set of pairs of semistandard tableaux:
\[\dSmu \leftrightarrow \displaystyle\bigsqcup_{\lambda\vdash n} \SSTab(\lambda,\mu)^2\ ,\]
via either one of the two RSK correspondences from Section \ref{subsec-RSK}. Recall also that the set $\SSTab(\lambda,\mu)$ is not empty if and only if $\lambda\geq\mu^{ord}$.

Thus the poset involved in the cellular datum for $\Hmu$ is going to be the set of partitions $\lambda$ of $n$ satisfying $\lambda\geq\mu^{ord}$, endowed with either the dominance ordering on partitions or the reverse dominance ordering.

\paragraph{Cellular bases.} Let $(\textbf{S},\textbf{T})\in \SSTab(\lambda,\mu)^2$ for some $\lambda\geq\mu^{ord}$. There is a unique standard tableau $\textbf{s}$ in $\STab(\lambda)$ which does not contain $J$ in its descent and such that $\overline{\textbf{s}}=\textbf{S}$, where $\overline{\ \cdot\ }$ is the map (\ref{map-mu}). Similarly, there is a unique standard tableau $\textbf{t}$ in $\STab(\lambda)$ which does not contain $J$ in its descent and such that $\overline{\textbf{t}}=\textbf{T}$.
\begin{exam}
If $\textbf{S}=\begin{array}{cccc}1 & 1 & 2 & 3 \\ 2 & 3 \end{array}$ then $\textbf{s}=\begin{array}{ccccc}1 & 2 & 3 & 5 \\ 4 & 6 \end{array}$.
\end{exam}

Unraveling the definition of the RSK correspondence (\ref{RSK2}), we see that if the pair $(\textbf{S},\textbf{T})$ corresponds to the coset $\cD$, then the pair $(\textbf{s}^t,\textbf{t}^t)$ corresponds to $\rpd$ through the usual RS correspondence. Thus we set:
\begin{equation}\label{cellular1}C_{\textbf{S},\textbf{T}}:=C_{\rpd}=C_{\textbf{s}^t,\textbf{t}^t}\ .\end{equation}

For the second basis, unraveling the definition of the RSK correspondence (\ref{RSK1}), we see that if the pair $(\textbf{S},\textbf{T})$ corresponds to the coset $\cD$, then the pair $(\textbf{s},\textbf{t})$ corresponds to $\rmd$ through the usual RS correspondence. Thus we set:
\begin{equation}\label{cellular2}C^{\da}_{\textbf{S},\textbf{T}}:=e_{\mu}C^{\da}_{\rmd}e_{\mu}=e_{\mu}C^{\da}_{\textbf{s},\textbf{t}}e_{\mu}\ .
\end{equation}

With these notations, we have that the two following sets form two cellular bases of $\Hmu$:
\[\{C_{\textbf{S},\textbf{T}}\}\,,\ \ \ \{C^{\da}_{\textbf{S},\textbf{T}}\}\ \ \ \ \ \text{where $(\textbf{S},\textbf{T})$ runs over $\displaystyle\bigsqcup_{\substack{\lambda\vdash n\\\ \ \lambda\geq\mu^{ord}}}\!\!\SSTab(\lambda,\mu)^2$\ .}\]
Indeed we already know that these are bases of $\Hmu$. Moreover,  Formulas (\ref{cellular1}) and (\ref{cellular2}), only in terms of tableaux, give immediately that $\iota(C_{\textbf{S},\textbf{T}})=C_{\textbf{T},\textbf{S}}$ and $\iota(C^{\da}_{\textbf{S},\textbf{T}})=C^{\da}_{\textbf{T},\textbf{S}}$. 

Regarding the property with respect to the left multiplication, let us consider the first basis. For $h\in H(S_n)$, we have explicitly
\[\begin{array}{ll}e_{\mu}he_{\mu}C_{\textbf{S},\textbf{T}}=e_{\mu}he_{\mu}C_{\textbf{s}^t,\textbf{t}^t}& =\displaystyle\sum_{\textbf{u},\textbf{v}\in\STab(\lambda)}r_{e_{\mu}}(\textbf{u}^t,\textbf{v}^t)r_h(\textbf{s}^t,\textbf{u}^t)C_{\textbf{v}^t,\textbf{t}^t}\ \text{mod}\ I_{<\lambda^t}\ \\[1em]
& =\displaystyle \sum_{\textbf{V}\in\SSTab(\lambda,\mu)}\bigl(\sum_{\textbf{u}\in\STab(\lambda)}r_{e_{\mu}}(\textbf{u}^t,\textbf{v}^t)r_h(\textbf{s}^t,\textbf{u}^t)\bigr)C_{\textbf{V},\textbf{T}}\ \text{mod}\ I_{<\lambda^t}\cap \Hmu\ .
\end{array}\]
The first multiplication by $e_{\mu}$ does nothing since $C_{\textbf{s}^t,\textbf{t}^t}$ is already in $\Hmu$. Since this calculation ends up in $\Hmu$, the sum over $\textbf{v}$ can be taken over the standard tableaux in $\STab(\lambda)$ which does not contain $J$ in their descent. To each such tableau $\textbf{v}$ corresponds a unique $\textbf{V}\in\SSTab(\lambda,\mu)$. In the resulting expression, the coefficient in front of $C_{\textbf{V},\textbf{T}}$ does not depend on $\textbf{T}$.

Finally, note that, due to the transposition of tableaux appearing in (\ref{cellular1}), the ideal $I_{<\lambda^t}\cap \Hmu$ is spanned by elements $C_{\textbf{U},\textbf{U'}}$ associated to shapes $\nu$ such that $\nu^t<\lambda^t$. This is equivalent to $\nu>\lambda$ and therefore the order on partitions that we have to use is the reverse dominance ordering.

For the second basis, still with $h\in H(S_n)$, we have explicitly:
\[\begin{array}{ll}e_{\mu}he_{\mu}C^\da_{\textbf{S},\textbf{T}}=e_{\mu}he_{\mu}C^\da_{\textbf{s},\textbf{t}}e_{\mu}& =\displaystyle\sum_{\textbf{u},\textbf{v}\in\STab(\lambda)}\tilde{r}_{h}(\textbf{u},\textbf{v})\tilde{r}_{e_{\mu}}(\textbf{s},\textbf{u})e_{\mu}C^{\da}_{\textbf{v},\textbf{t}}e_{\mu}\ \text{mod}\ I_{<\lambda}\ \\[1em]
& =\displaystyle \sum_{\textbf{V}\in\SSTab(\lambda,\mu)}\bigl(\sum_{\textbf{u}\in\STab(\lambda)}\tilde{r}_{h}(\textbf{u},\textbf{v})\tilde{r}_{e_{\mu}}(\textbf{s},\textbf{u})\bigr)C^{\da}_{\textbf{V},\textbf{T}}\ \text{mod}\ I_{<\lambda}\cap \Hmu\ .
\end{array}\]
The sum over $\textbf{v}$ can be taken over the standard tableaux in $\STab(\lambda)$ which does not contain $J$ in their descent since otherwise $e_{\mu}C^{\da}_{\textbf{v},\textbf{t}}$ would be $0$. Again to each such tableau $\textbf{v}$ corresponds a unique $\textbf{V}\in\SSTab(\lambda,\mu)$. In the resulting expression, the coefficient in front of $C^{\da}_{\textbf{V},\textbf{T}}$ does not depend on $\textbf{T}$. In this case, the order on partitions that we have to use is the usual dominance ordering.

\section{Application to Schur--Weyl duality}\label{sec-SW}

Unless otherwise specified, we work in this section in the semisimple situation, namely over the field $\mathbb{C}(q)$ or with a non-zero complex number $q$ such that $q^2$ is not a root of unity whose order is between $2$ and $n$ ($q^2=1$ is allowed).

In this section $N$ is an integer such that $N\geq 1$ and we will assume that $d$ (the length of the composition $\mu=(\mu_1,\dots,\mu_d)$) satisfies 
\[d>N\ ,\]
because otherwise the ideal $I^{\mu}_{N}$ studied in this section is $\{0\}$.

\subsection{The Schur--Weyl duality for $\Hmu$}

From \cite{CP23}, the algebra $\Hmu$ appears in the Schur--Weyl duality through a representation:
\begin{equation}\label{rep-SW}
\pi_N\ :\ \Hmu\to\text{End}\bigl(S^{\mu_1}_qV\otimes\dots\otimes S^{\mu_d}_qV\bigr)\,,
\end{equation}
where $V$ is of dimension $N$ and $S^k_qV$ is the $k$-th $q$-symmetrized power of $V$, which is an irreducible representation of the quantum group $U_q(gl_N)$. The map $\pi_n$ is surjective onto the $U_q(gl_N)$-centraliser, but it is not injective as soon as $d>N$ and it remains to understand its kernel. We set:
\[I^{\mu}_{N}=\text{Ker}\pi_N\,,\]
so that the quotient of $\Hmu$ by its ideal $I^{\mu}_{N}$ is isomorphic to the centraliser. We are interested in finding a linear basis of the ideal $I^{\mu}_{N}$ and a set (as simple as possible) of generators of it.

We will need the description of the ideal $I^{\mu}_{N}$ in terms of the representations of $\Hmu$ obtained in \cite{CP23}. From the classification of its irreducible representations (Section \ref{subsec-rep}), the algebra $\Hmu$ has an Artin--Wedderburn decomposition:
\[\Hmu\cong\bigoplus_{\substack{\lambda\vdash n \\ \lambda\geq\mu^{ord}}} \text{End}\bigl(e_{\mu}(V_{\lambda})\bigr)\ .\]
It is proved in \cite{CP23} that the ideal $I^{\mu}_{N}$ is the one made of the summands above corresponding to partitions with strictly more than $N$ rows.

\begin{exam}\label{exam_ideal}
Take $n=6$ and $\mu=(2,2,2)$. The irreducible representations of $\Hmu$ are indexed by the following partitions (the partitions $\lambda\vdash 6$ with $\lambda\geq(2,2,2)$):
\begin{center}
 \begin{tikzpicture}[scale=0.2]
\diag{-25}{-9}{6};\node at (-26,-9.5) {$1$};\diagg{-16}{-9}{5}{1};\node at (-17,-10) {$2$};\diagg{-8}{-9}{4}{2};\node at (-9,-10) {$3$};
\diagg{-1}{-9}{3}{3};\node at (-2,-10) {$1$};\diaggg{5}{-9}{4}{1}{1};\node at (4,-10.5) {$1$}; \diaggg{12}{-9}{3}{2}{1};\node at (11,-10.5) {$2$};\diaggg{18}{-9}{2}{2}{2};\node at (17,-10.5) {$1$};

\draw[thin, fill=gray,opacity=0.4] (3.5,-10.5)..controls +(0,6) and +(0,6) .. (21.5,-10.5) .. controls +(0,-6) and +(0,-6) .. (3.5,-10.5);

\end{tikzpicture}
\end{center}
The numbers are the dimensions. When $N=2$, the ideal $I^{\mu}_{N}$ corresponds to the three partitions in the shaded area.
\end{exam}

\subsection{A linear basis for the ideal $I^{\mu}_{N}$}

We are going to use the results proved in the preceding sections for the second basis of $\Hmu$. This basis was introduced in Section \ref{subsec_KLbasispar2} for an arbitrary parabolic Hecke algebra, and discussed in Sections \ref{sec-typeA} for the type A. For simplicity of notation here, we will identify a double coset $\cD\in\dSmu$ with its minimal representative $\rmd$ in $X_{JJ}$. Recall that $X_{JJ}$ is the set of elements in $S_n$ with left and right descents disjoint from $J$, where $J$ is associated to $\mu$ by
\[J=\{1,\dots,\mu_1-1,\ \ \ \mu_1+1,\dots,\mu_1+\mu_2-1,\ \ \ \dots\dots\}\ ;\]
that is, in order to get $J$, we remove from $\{1,\dots,n-1\}$ the indices $\mu_1,\mu_1+\mu_2,\dots,\mu_1+\dots+\mu_{d-1}$.

The basis of $\Hmu$ we consider is thus $\{e_{\mu}C^{\da}_{w}e_{\mu}\}_{w\in X_{JJ}}$. Recall also that the shape $sh(w)$ of an element $w\in S_n$ is by definition the partition $\lambda$ which is the shape of the standard tableaux $(P(w),Q(w))$ corresponding to $w$ via the RS correspondence.

\begin{prop}\label{prop_basis_ideal}
A basis of $I^{\mu}_{N}$ is:
\begin{equation}\label{basis-ideal}
\{ e_{\mu}C^{\da}_{w}e_{\mu}\,,\ \ \text{with $w\in X_{JJ}$ such that $sh(w)$ has strictly more than $N$ rows}\,.\}
\end{equation}
\end{prop}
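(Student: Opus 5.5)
The strategy is to combine the description of $I^{\mu}_{N}$ from \cite{CP23} with the cellular structure of the second basis. Let $\mathcal{P}_N$ be the set of partitions $\lambda\vdash n$ with $\lambda\geq\mu^{ord}$ and strictly more than $N$ rows. By \cite{CP23}, $I^{\mu}_{N}$ is the sum of the Wedderburn blocks $\text{End}(e_{\mu}(V_\lambda))$ for $\lambda\in\mathcal{P}_N$. Equivalently, $I^{\mu}_{N}$ is the set of $h\in\Hmu$ acting by zero on every irreducible $e_\mu(V_\nu)$ with $\nu$ having at most $N$ rows. Its dimension is $\sum_{\lambda\in\mathcal{P}_N}|\SSTab(\lambda,\mu)|^2$.

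The first step is counting. By Proposition \ref{prop-cells-Hmu2} and the RSK correspondence (\ref{RSK1}), the set in (\ref{basis-ideal}) is in bijection with the pairs in $\bigsqcup_{\lambda\in\mathcal{P}_N}\SSTab(\lambda,\mu)^2$. Its elements are linearly independent, being part of the basis of Proposition \ref{prop-CT-par2}. So its cardinality equals $\dim I^{\mu}_{N}$, and it suffices to show that each element $e_\mu C^\da_w e_\mu$ with $sh(w)\in\mathcal{P}_N$ lies in $I^{\mu}_{N}$.

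The second step proves this inclusion. Let $\nu$ have at most $N$ rows. The irreducible $V_\nu$ is isomorphic to a cell module $V^\da_{\Gamma'}$, where $\Gamma'$ is a left cell of shape $\nu$. I plan to use the $H(S_n)$ statement rather than the parabolic one. For any $x\in S_n$ with $sh(x)=\lambda$, the element $C^\da_x$ spans, together with other $C^\da_y$ of shape $\lambda$, a two-sided ideal $I^\da_{\preceq_{\mathcal{LR}}}$ spanned by $C^\da_y$ with $sh(y)\leq\lambda$. This is the cellularity recalled in Section \ref{subsec-cellular}. Hence $C^\da_x$ acts on $V^\da_{\Gamma'}=I^\da_{\preceq\Gamma'}/I^\da_{\prec\Gamma'}$ through $C^\da_x C^\da_{y'}$, which lies in the span of $C^\da_z$ with $sh(z)\le\lambda$ and $sh(z)\le\nu$ in the two-sided order. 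If $\lambda$ has more than $N$ rows and $\nu$ has at most $N$ rows, then $\lambda\not\leq\nu$ in dominance order, because dominance-smaller partitions have at least as many rows. Therefore no $z$ of shape exactly $\nu$ can appear, and $C^\da_x$ acts by zero on $V_\nu$.

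Then $e_\mu C^\da_w e_\mu$ also acts by zero on $V_\nu$, hence on $e_\mu(V_\nu)$. This holds for all such $\nu$, so it lies in $I^{\mu}_{N}$.

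The main subtlety is the second step: checking that the two-sided-ideal property in $H(S_n)$ gives vanishing on $V_\nu$ in the semisimple setting. The key fact is that the two-sided ideal spanned by $\{C^\da_y: sh(y)\le\lambda\}$ is, over $\mathbb{C}(q)$, the sum of the Wedderburn blocks of $H(S_n)$ for shapes $\le\lambda$. I will justify this by noting that this ideal annihilates every $V^\da_{\Gamma'}$ of shape $\nu\not\le\lambda$ by the cell-module computation above, which is exactly what is needed.
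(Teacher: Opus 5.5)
Your argument is correct, but it takes a different route from the paper's. The paper rewrites ``more than $N$ rows'' as $sh(w)\leq\Hk_{N+1,n}$. It then uses the parabolic cell order of Proposition~\ref{prop-cells-Hmu2} to see directly that the proposed set spans a two-sided ideal of $\Hmu$, namely the sum of the cell ideals of the second basis for shapes below the hook. Finally it matches the irreducibles attached to this cell ideal (implicitly through the identification $V^\da_{\Gamma\cap X_{JJ}}=e_\mu(V^\da_\Gamma)$ of Proposition~\ref{prop-rep2}) with those making up $I^\mu_N$, and concludes by semisimplicity.

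You instead compare cardinality with dimension, via (\ref{RSK1}) against $\dim e_\mu(V_\lambda)=|\SSTab(\lambda,\mu)|$. You prove the inclusion by an annihilation argument carried out entirely inside $H(S_n)$, using only the two-sided cell order of $S_n$. Your route bypasses the parabolic cell theory, including the irreducibility hypothesis behind Proposition~\ref{prop-cell2b}. You never need to know in advance that the span is an ideal, since that follows from the equality. The paper's route gives the extra structural fact that $I^\mu_N$ is a union of two-sided cells for the basis $\{e_\mu C^\da_w e_\mu\}$. That fact is exactly what later motivates the candidate generator $Y^\mu_N$.

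One slip needs fixing. To exclude $z$ of shape $\nu$ from the expansion of $C^\da_xC^\da_{y'}$, you need $\nu\not\leq\lambda$, not $\lambda\not\leq\nu$. The inequality you wrote is false in general: take $\lambda=(1^n)$ and $\nu=(n)$. It also does not yield your conclusion. Your own justification does prove the right statement: if $\nu\leq\lambda$, then $\nu$ has at least as many rows as $\lambda$, hence more than $N$. Also, the ``key fact'' in your last paragraph, equality with a sum of Wedderburn blocks, claims more than your justification shows. However, you only use the containment coming from annihilation, and that you do establish.
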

In terms of the cellular basis $\{C^\da_{\textbf{S},\textbf{T}}\}$ from Section \ref{subsec-cellular}, that is when identifying an element $w\in S_n$ with a pair of standard tableaux via the RS correspondence, the basis of $I^{\mu}_{N}$ reads:
\[\{ e_{\mu}C^{\da}_{\textbf{s},\textbf{t}}e_{\mu}\,,\ \ \text{$\textbf{s},\textbf{t}$ have strictly more than $N$ rows and descents disjoint from $J$}\,.\}\]
Note that from the RSK correspondence recalled in (\ref{RSK1}) and the classical combinatorial result recalled in (\ref{Kostka}), we know that the shape $\lambda$ of an element $w\in X_{JJ}$, or equivalently, the shape of a standard tableau with descent disjoint from $J$, must satisfy $\lambda\geq\mu^{ord}$.
\begin{proof}
We use the description of the ideal $I^{\mu}_{N}$ in terms of the representations recalled above. The ideal $I^{\mu}_{N}$ is made of the summands in the Artin--Wedderburn decomposition corresponding to partitions with strictly more than $N$ rows (see Example \ref{exam_ideal}). Luckily, this condition is simply characterised in terms of the dominance ordering. Indeed, denote by $\Hk_{N+1,n}$ the hook partition with $N+1$ rows and of size $n$. This partition indeed satisfies $\Hk_{N+1,n}\geq\mu^{ord}$ due to the condition $d>N$. Then it is easy to see that:
\[\lambda\vdash n\ \text{has strictly more than $N$ rows}\ \ \Leftrightarrow\ \ \lambda\leq \Hk_{N+1,n}\ .\]
This remark allows to write the set (\ref{basis-ideal}) above as:
\[\{ e_{\mu}C^{\da}_{w}e_{\mu}\,,\ \ \text{with $w\in X_{JJ}$ such that $sh(w)\leq \Hk_{N+1,n}$}\,.\}\]
This shows, using the cell order property from Proposition \ref{prop-cells-Hmu2}, that this set indeed spans an ideal of $\Hmu$. This is in fact the sum of the (two-sided) cell ideals associated to the two-sided cells corresponding to shapes $\lambda\leq\Hk_{N+1,n}$. The irreducible representations of $\Hmu$ corresponding to this ideal are exactly the ones corresponding to $I^{\mu}_{N}$, therefore this ideal is $I^{\mu}_{N}$.
\end{proof}

\begin{rema}
We emphasize that we can not use directly the other Kazhdan--Lusztig basis $\{C_w\}_{w\in X^{JJ}}$ of $\Hmu$ since the cell representations correspond to the representations $e_{\mu}(V_{\lambda^t})$. This means that the desired representations correspond to all cells above the shape $\Hk_{N+1,n}^t$ in the cell order, and this does not form a cell ideal.
\end{rema}

\begin{rema}
In the general situation, that is over the defining ring $A$, or for an arbitrary authorised specialisation of $q$ (see Remark \ref{rem-spec}), the representation $\pi_N$ in (\ref{rep-SW}) is still defined, and thus one can wonder if the kernel still admits the description of the previous proposition. It turns out to be true for the usual Hecke algebra; see \cite{GW93} or \cite{Mar92}. We leave this question open for the algebra $\Hmu$.
\end{rema}

\subsection{Conjectures for a generator of the ideal $I^{\mu}_{N}$}

As detailed in \cite{CP23}, in the semisimple regime, it is somehow enough to understand the ideal $I^{\mu}_{N}$ at the first level where it is not trivial, namely when $d=N+1$. Indeed, say we have an element $X$ of $\Hmu$, where $\mu=(\mu_1,\dots,\mu_N,\mu_{N+1})$, which generates the ideal $I^{\mu}_{N}$. If we increase the length of $\mu$ to $\mu^+=(\mu_1,\dots,\mu_N,\mu_{N+1},\mu_{N+2})$ then we can see naturally the algebra $\Hmu$ as a subalgebra of $H^{\mu^+}(S_{n'})$. Then the ideal $I^{\mu^+}_{N}$ of $H^{\mu^+}_{n'}$ will simply be generated by $X$, now seen as an element of $H^{\mu^+}(S_{n'})$ by the natural embedding. This is proved in \cite{CP23} under the (necessary) assumption that $\mu$ and $\mu^+$ are partitions, instead of general compositions. 

For simplicity and due to the preceding short discussion, we will now be considering that
\[d=N+1\,,\ \ \ \ \text{that is,}\ \ \ \mu=(\mu_1,\dots,\mu_N,\mu_{N+1})\ .\]
As always, we have $n=\mu_1+\dots+\mu_{N+1}$. Note that for what follows we do not need to assume that $\mu$ is a partition since we will only be speaking of the ideal $I^{\mu}_{N}$ at level $d=N+1$.

\subsubsection{A first tentative generator}

Here we study the possibility to find a generator of the ideal $I^{\mu}_{N}$ by looking  at the basis $\{e_{\mu}C^{\da}_we_{\mu}\}_{w\in X_{JJ}}$ of $\Hmu$. As mentioned before, one difficulty is that the ideal $I^{\mu}_{N}$ contains more than one irreducible representation. However, we still have a bit of luck, in the sense that the subset of partitions that we need to consider has a simple description in terms of the dominance order, that is, in terms of the cell order corresponding to the basis $\{e_{\mu}C^{\da}_we_{\mu}\}_{w\in X_{JJ}}$ of $\Hmu$.

More precisely, recall that $\Hk_{N+1,n}$ is the hook shape partition with a first column of  size $N+1$ (and total size $n$). As used in the preceding subsection, it is easy to see that
\[\{\lambda\vdash n\ \text{with strictly more than $N$ rows}\}=\{\lambda\vdash n\ \text{such that}\ \lambda\leq \Hk_{N+1,n}\}\ .\]
Therefore, to generate the ideal $I^{\mu}_{N}$, the only choice for the basis $\{e_{\mu}C^{\da}_we_{\mu}\}$ is to take elements corresponding to the cell associated to the hook shape $\Hk_{N+1,n}$.

Moreover, the irreducible representation of $\Hmu$ corresponding to $\Hk_{N+1,n}$ is one-dimensional. Indeed there is a unique semistandard tableau in $\SSTab(\Hk_{N+1,n},\mu)$ corresponsing to the unique standard tableau in $\STab(\Hk_{N+1,n})$ with descent disjoint from $J$. This standard tableau of shape $\Hk_{N+1,n}$, denoted $\tilde{\textbf{t}}_{N+1}$, has the following entries in the first column:
\[1,\ \mu_1+1,\ \mu_1+\mu_2+1,\ \dots,\ \mu_1+\dots+\mu_N+1\ .\]
Its descent, which is $\{\mu_1,\mu_1+\mu_2,\dots,\mu_1+\dots+\mu_N\}$, is indeed disjoint from $J$, which we recall is exactly $\{1,\dots,n\}$ minus the preceding subset.
\begin{exam}\label{exam_idealagain}
Take again $n=6$ and $\mu=(2,2,2)$. The irreducible representations of $\Hmu$ are:
\begin{center}
 \begin{tikzpicture}[scale=0.2]
\diag{-25}{-9}{6};\node at (-26,-9.5) {$1$};\diagg{-16}{-9}{5}{1};\node at (-17,-10) {$2$};\diagg{-8}{-9}{4}{2};\node at (-9,-10) {$3$};
\diagg{-1}{-9}{3}{3};\node at (-2,-10) {$1$};\diaggg{5}{-9}{4}{1}{1};\node at (4,-10.5) {$1$}; \diaggg{12}{-9}{3}{2}{1};\node at (11,-10.5) {$2$};\diaggg{18}{-9}{2}{2}{2};\node at (17,-10.5) {$1$};

\draw[thin, fill=gray,opacity=0.4] (3.5,-10.5)..controls +(0,6) and +(0,6) .. (21.5,-10.5) .. controls +(0,-6) and +(0,-6) .. (3.5,-10.5);

\end{tikzpicture}
\end{center}
and, for $N=2$, the ideal $I^{\mu}_{N}$ corresponds to the three partitions in the shaded area. The hook shape $(4,1,1)$ dominates them all in the dominance order and corresponds to a one-dimensional representation. The unique semistandard tableau of this shape is $\begin{array}{cccc}
\fbox{\scriptsize{$1$}} & \hspace{-0.35cm}\fbox{\scriptsize{$1$}}& \hspace{-0.35cm}\fbox{\scriptsize{$2$}}& \hspace{-0.35cm}\fbox{\scriptsize{$3$}} \\[-0.2em]
\fbox{\scriptsize{$2$}} &\\[-0.2em]
\fbox{\scriptsize{$3$}} &
\end{array}$ corresponding to the unique standard tableau $\tilde{\textbf{t}}_{N+1}=\begin{array}{cccc}
\fbox{\scriptsize{$1$}} & \hspace{-0.35cm}\fbox{\scriptsize{$2$}}& \hspace{-0.35cm}\fbox{\scriptsize{$4$}}& \hspace{-0.35cm}\fbox{\scriptsize{$6$}} \\[-0.2em]
\fbox{\scriptsize{$3$}} &\\[-0.2em]
\fbox{\scriptsize{$5$}} &
\end{array}$ not containing $1,3,5$ in its descent. More examples are given below
\end{exam}

To summarise, if we want to generate the cell ideal $I^{\mu}_{N}$, there is only one choice from the point of view of the basis $e_{\mu}C^{\da}_we_{\mu}$, which we formalise in the following definition
\begin{defi}\label{defYmuN}
We define:
\[Y^{\mu}_N=e_{\mu}C^{\da}_{\tilde{t}_{N+1},\tilde{t}_{N+1}}e_{\mu}=e_{\mu}C^{\da}_{\tilde{w}_{N+1}}e_{\mu}\,,\]
where $\tilde{t}_{N+1}$ is the unique standard tableau of shape $\Hk_{N+1,n}$ with descent disjoint from $J$ and $\tilde{w}_{N+1}$ is the permutation in $S_n$ corresponding to $(\tilde{t}_{N+1},\tilde{t}_{N+1})$ under the RS correspondence.
\end{defi}
The permutation $\tilde{w}_{N+1}$ appearing in the above definition can be described explicitly. It is the permutation of order 2 doing the following transpositions:
\[\mu_1+\dots+\mu_i\ \leftrightarrow\ \mu_1+\dots+\mu_{N+1-i}+1\,,\ \ \ \ \ \ \forall i\leq \frac{N+1}{2}\ .\]
More visually, recall the decomposition of $\{1,\dots,n\}$ into consecutive subsets of sizes $\mu_1,\mu_2,\dots,\mu_{N+1}$. Then send the last letter of the first subset to the first letter of the last subset, then the last letter of the second subset to the first letter of the second to last subset, and so on. If there is an odd number of subsets ($N+1$ is odd), the subset in the middle is untouched. The following diagram should be helpful to visualize $\tilde{w}_{N+1}$ (see also the examples below):
\begin{center}
 \begin{tikzpicture}[scale=0.3]
\draw (0.8,0.5) -- (0.8,1) -- (9.2,1) -- (9.2,0.5);\node at (5,2) {$\mu_1$};
\fill (1,0) circle (0.2);
\fill (2,0) circle (0.2);
\node at (5,0) {$\dots$};

\fill (8,0)  circle (0.2);
\fill (9,0) circle (0.2);

\draw (9.8,0.5) -- (9.8,1) -- (16.2,1) -- (16.2,0.5);\node at (13,2) {$\mu_2$};
\fill (10,0) circle (0.2);
\fill (11,0) circle (0.2);
\node at (13,0) {$\dots$};
\fill (16,0) circle (0.2);

\node at (18.5,0) {$\dots$};

\node at (21,0) {$\dots$};

\node at (23.5,0) {$\dots$};

\node at (26,0) {$\dots$};

\draw (29.8,0.5) -- (29.8,1) -- (36.2,1) -- (36.2,0.5);\node at (34,2) {$\mu_{N}$};
\fill (30,0) circle (0.2);
\node at (33,0) {$\dots$};
\fill (36,0) circle (0.2);

\draw (36.8,0.5) -- (36.8,1) -- (43.2,1) -- (43.2,0.5);\node at (40,2) {$\mu_{N+1}$};
\fill (37,0) circle (0.2);
\node at (40,0) {$\dots$};
\fill (43,0) circle (0.2);

\draw[<->,thick] (9,-0.5) ..controls +(14,-3) .. (37,-0.5);
\draw[<->,thick] (16,-0.5) ..controls +(7,-2) .. (30,-0.5);
\node at (23,-3.5) {\scriptsize{$\tilde{w}_{N+1}$}};

\node at (50,0) {.};
\end{tikzpicture}
\end{center}
It is straightforward to check that the RS correspondence produces the desired pair $(\tilde{\textbf{t}}_{N+1},\tilde{\textbf{t}}_{N+1})$.
\begin{exam}
When $\mu=(2,2,\dots,2)$, we show for small $N$ the standard tableau $\tilde{\textbf{t}}_{N+1}$ and the permutation $\tilde{w}_{N+1}$ in one-line notation:
\[\begin{array}{lll} N=1\ \text{and}\ \mu=(2,2)\ : & \tilde{\textbf{t}}_{N+1}=\begin{array}{cccc}
\fbox{\scriptsize{$1$}} & \hspace{-0.35cm}\fbox{\scriptsize{$2$}}& \hspace{-0.35cm}\fbox{\scriptsize{$4$}}\\[-0.2em]
\fbox{\scriptsize{$3$}}
\end{array} & \tilde{w}_{N+1}=1324\ ,\\[0.8em] 
 N=2\ \text{and}\ \mu=(2,2,2)\ : & \tilde{\textbf{t}}_{N+1}=\begin{array}{cccc}
\fbox{\scriptsize{$1$}} & \hspace{-0.35cm}\fbox{\scriptsize{$2$}}& \hspace{-0.35cm}\fbox{\scriptsize{$4$}}& \hspace{-0.35cm}\fbox{\scriptsize{$6$}} \\[-0.2em]
\fbox{\scriptsize{$3$}} &\\[-0.2em]
\fbox{\scriptsize{$5$}} &
\end{array} & \tilde{w}_{N+1}=153426\ ,\\[1.2em]
 N=3\ \text{and}\ \mu=(2,2,2,2)\ : & \tilde{\textbf{t}}_{N+1}=\begin{array}{ccccc}
\fbox{\scriptsize{$1$}} & \hspace{-0.35cm}\fbox{\scriptsize{$2$}}& \hspace{-0.35cm}\fbox{\scriptsize{$4$}}& \hspace{-0.35cm}\fbox{\scriptsize{$6$}}& \hspace{-0.35cm}\fbox{\scriptsize{$8$}} \\[-0.2em]
\fbox{\scriptsize{$3$}} &\\[-0.2em]
\fbox{\scriptsize{$5$}} &\\[-0.2em]
\fbox{\scriptsize{$7$}} &
\end{array} & \tilde{w}_{N+1}=17354628\ .\end{array} \]
\end{exam}

\subsubsection{A second tentative generator}

Here we recall the definition of another tentative generator of the ideal $I^{\mu}_{N}$ from \cite{CP23}. This is done in several steps.

\paragraph{The element $T_{\gamma_{\mu}}$.} Consider the following element of the Hecke algebra $H(S_n)$:
\begin{equation}\label{TGamma}
T_{\gamma_{\mu}}=T_{\mu_1}\dots T_2\cdot T_{\mu_1+\mu_2}\dots T_3 \cdot \ldots\ldots\cdot T_{\mu_1+\dots+\mu_{N}}\dots T_{N+1}\ , 
\end{equation}
where the dots between $T_{\mu_1+\dots+\mu_{a}}$ and $T_{a+1}$ indicate the product of the generators in decreasing order of their indices. By convention, this product is $1$ when $\mu_1+\dots+\mu_a<a+1$. Note that $T_{\gamma_{\mu}}=1$ only if $\mu=(1,1,\dots,1)$. Graphically, for example if $N=2$, the element $T_{\gamma_{\mu}}$ is depicted as:
\begin{center}
 \begin{tikzpicture}[scale=0.3]
\node at (-2,0) {$T_{\gamma_{\mu}}=$};
\draw (0.8,5.5) -- (0.8,6) -- (9.2,6) -- (9.2,5.5);\node at (5,7) {$\mu_1$};
\fill (1,5) circle (0.2);\fill (1,-5) circle (0.2);\node at (1,-6) {\scriptsize{$1$}};
\fill (2,5) circle (0.2);\fill (2,-5) circle (0.2);\node at (2,-6) {\scriptsize{$2$}};
\fill (3,5) circle (0.2);\fill (3,-5) circle (0.2);\node at (3,-6) {\scriptsize{$3$}};
\node at (5,5) {$\dots$};
\fill (4,-5) circle (0.2);
\fill (5,-5) circle (0.2);\node at (7,-5) {$\dots$};
\fill (7,5)  circle (0.2);
\fill (8,5)  circle (0.2);
\fill (9,5) circle (0.2);\fill (9,-5) circle (0.2);

\draw (9.8,5.5) -- (9.8,6) -- (16.2,6) -- (16.2,5.5);\node at (13,7) {$\mu_2$};
\fill (10,5) circle (0.2);\fill (10,-5) circle (0.2);
\fill (11,5) circle (0.2);\fill (11,-5) circle (0.2);
\node at (13,5) {$\dots$};\fill (12,-5) circle (0.2);
\node at (14,-5) {$\dots$};
\fill (15,5) circle (0.2);
\fill (16,5) circle (0.2);\fill (16,-5) circle (0.2);

\draw (16.8,5.5) -- (16.8,6) -- (23.2,6) -- (23.2,5.5);\node at (20,7) {$\mu_3$};
\fill (17,5) circle (0.2);\fill (17,-5) circle (0.2);
\fill (18,5) circle (0.2);\fill (18,-5) circle (0.2);
\node at (20.5,5) {$\dots$};\node at (20.5,-5) {$\dots$};
\fill (23,5) circle (0.2);\fill (23,-5) circle (0.2);

\draw[thick] (1,5) -- (1,-5);
\draw[thick] (10,5)..controls +(0,-4) and +(0,+4) .. (2,-5);
\draw[thick] (17,5)..controls +(0,-4) and +(0,+4) .. (3,-5);

\fill[white] (9.2,2.8) circle (0.3);\fill[white] (8.4,2) circle (0.3);\fill[white] (7.6,1.2) circle (0.3);
\fill[white] (4.4,-1.2) circle (0.3);\fill[white] (3.6,-2) circle (0.3);
\fill[white] (4.7,-2.6) circle (0.3);\fill[white] (3.9,-3.3)circle (0.3);
\fill[white] (16,3.3) circle (0.3);\fill[white] (15.1,2.5) circle (0.3);
\fill[white] (11.5,0.6) circle (0.3);\fill[white] (10,0) circle (0.3);
\fill[white] (9.1,-0.3) circle (0.3);\fill[white] (8.2,-0.7) circle (0.3);

\draw[thick] (2,5)..controls +(0,-4) and +(0,+4) .. (4,-5);
\draw[thick] (3,5)..controls +(0,-4) and +(0,+4) .. (5,-5);
\draw[thick] (7,5)..controls +(0,-4) and +(0,+4) .. (9,-5);
\draw[thick] (8,5)..controls +(0,-4) and +(0,+4) .. (10,-5);
\draw[thick] (9,5)..controls +(0,-4) and +(0,+4) .. (11,-5);
\draw[thick] (11,5)..controls +(0,-4) and +(0,+4) .. (12,-5);
\draw[thick] (15,5)..controls +(0,-4) and +(0,+4) .. (16,-5);
\draw[thick] (16,5)..controls +(0,-4) and +(0,+4) .. (17,-5);
\draw[thick] (18,5)..controls +(0,-4) and +(0,+4) .. (18,-5);
\draw[thick] (23,5)..controls +(0,-4) and +(0,+4) .. (23,-5);


\end{tikzpicture}
\end{center}
As a permutation (recall that we read from bottom to top, which corresponds to composing permutations in the usual way, from right to left), we have $\gamma_{\mu}(1)=1$, $\gamma_{\mu}(2)=\mu_1+1$, ..., $\gamma_{\mu}(N+1)=\mu_1+\dots+\mu_N+1$, so the permutation $\gamma_{\mu}$ sends $1,\dots,N+1$, perserving their order, to the first number in each of the subsets of sizes $\mu_1,\mu_2,\dots,\mu_{N+1}$. The elements after $N+1$ are ``pushed to the left'', namely, they are sent, preserving their order, to the remaining available elements.

In the Hecke algebra, the element $T_{\gamma_{\mu}}$ above is obtained from a reduced expression of $\gamma_{\mu}$ by using only positive crossing (left strand above right strand), that is $T_{\gamma_{\mu}}$ involves only generators $T_i$'s and not their inverses.

We note at once the following property. We have
\begin{equation}\label{Tgammap}
e_{\mu}T_{\gamma_{\mu}}XT_{\gamma_{\mu}}^{-1}e_{\mu}=e_{\mu}T_{\gamma'_{\mu}}XT_{\gamma'_{\mu}}^{-1}e_{\mu}\,,
\end{equation}
where, for example for $N=2$, $T_{\gamma'_{\mu}}$ is any element of the form
\begin{center}
 \begin{tikzpicture}[scale=0.3]
\node at (-2,0) {$T_{\gamma'_{\mu}}=$};
\draw (0.8,5.5) -- (0.8,6.5) -- (9.2,6.5) -- (9.2,5.5);\node at (5,7.5) {$\mu_1$};
\node at (4.5,5.7){\scriptsize{$i_1$}};
\fill (1,5) circle (0.2);\fill (1,-5) circle (0.2);
\fill (2,5) circle (0.2);\fill (2,-5) circle (0.2);\node at (1,-6) {\scriptsize{$1$}};
\fill (2,5) circle (0.2);\fill (2,-5) circle (0.2);\node at (2,-6) {\scriptsize{$2$}};
\fill (3,-5) circle (0.2);\node at (3,-6) {\scriptsize{$3$}};

\fill (4,-5) circle (0.2);
\fill (5,-5) circle (0.2);\node at (7,-5) {$\dots$};
\fill (7,5)  circle (0.2);
\fill (8,5)  circle (0.2);
\fill (9,5) circle (0.2);\fill (9,-5) circle (0.2);

\draw (9.8,5.5) -- (9.8,6.5) -- (16.2,6.5) -- (16.2,5.5);\node at (13,7.5) {$\mu_2$};
\node at (13,5.7){\scriptsize{$i_2$}};
\fill (10,5) circle (0.2);\fill (10,-5) circle (0.2);
\fill (11,-5) circle (0.2);
\fill (12,-5) circle (0.2);
\node at (14,-5) {$\dots$};
\fill (15,5) circle (0.2);
\fill (16,5) circle (0.2);\fill (16,-5) circle (0.2);

\draw (16.8,5.5) -- (16.8,6.5) -- (23.2,6.5) -- (23.2,5.5);\node at (20,7.5) {$\mu_3$};
\node at (20,5.7){\scriptsize{$i_3$}};
\fill (17,5) circle (0.2);\fill (17,-5) circle (0.2);
\fill (18,-5) circle (0.2);
\node at (20.5,-5) {$\dots$};
\fill (23,5) circle (0.2);\fill (23,-5) circle (0.2);

\draw[thick] (4.5,5)..controls +(0,-4) and +(0,+4) .. (1,-5);
\fill (4.5,5) circle (0.2);
\draw[thick] (13,5)..controls +(0,-4) and +(0,+4) .. (2,-5);
\fill (13,5) circle (0.2);
\draw[thick] (20,5)..controls +(0,-4) and +(0,+4) .. (3,-5);
\fill (20,5) circle (0.2);

\fill[white] (10.5,1.6) circle (0.3);\fill[white] (9.65,1.2) circle (0.3);\fill[white] (8.7,0.8) circle (0.3);\fill[white] (7.85,0.4) circle (0.3);
\fill[white] (11.1,-0.2) circle (0.3);\fill[white] (10.25,-0.6) circle (0.3);\fill[white] (9.3,-0.95) circle (0.3);\fill[white] (8.45,-1.3) circle (0.3);
\fill[white] (15.25,1.5) circle (0.3);\fill[white] (16.15,1.9) circle (0.3);\fill[white] (17.1,2.3) circle (0.3);
\fill[white] (3.8,-3.4)circle (0.3);\fill[white] (4.7,-2.9) circle (0.3);
\fill[white] (3.5,-2.4)circle (0.3);\fill[white] (4.4,-1.9) circle (0.3);
\fill[white] (2.6,-0.2)circle (0.3);\fill[white] (3.2,0.65) circle (0.3);

\draw[thick] (1,5)..controls +(0,-4) and +(0,+4) .. (4,-5);
\draw[thick] (2,5)..controls +(0,-4) and +(0,+4) .. (5,-5);
\draw[thick] (7,5)..controls +(0,-4) and +(0,+4) .. (9,-5);
\draw[thick] (8,5)..controls +(0,-4) and +(0,+4) .. (10,-5);
\draw[thick] (9,5)..controls +(0,-4) and +(0,+4) .. (11,-5);
\draw[thick] (10,5)..controls +(0,-4) and +(0,+4) .. (12,-5);
\draw[thick] (15,5)..controls +(0,-4) and +(0,+4) .. (16,-5);
\draw[thick] (16,5)..controls +(0,-4) and +(0,+4) .. (17,-5);
\draw[thick] (17,5)..controls +(0,-4) and +(0,+4) .. (18,-5);
\draw[thick] (23,5)..controls +(0,-4) and +(0,+4) .. (23,-5);

\end{tikzpicture}
\end{center}
that is, we can instead send $1$ to $i_1$, $2$ to $i_2$, ..., for any choice of $i_1$ in the first set of size $\mu_1$, any choice of $i_2$ in the following subset of size $\mu_2$ and so on. Indeed note that $T_{\gamma'_{\mu}}$ is obtained from $T_{\gamma_{\mu}}$ by precomposing by elements $T_i$ where $i\in J$. The explicit formula is:
\[T_{\gamma'_{\mu}}=T_{i_1-1}\dots T_1\cdot T_{i_2-1}\dots  T_{\mu_1+1}\cdot\ldots\cdot T_{i_{N+1}-1}\dots T_{\mu_1+\dots+\mu_N+1}\,\cdot\,T_{\gamma_{\mu}}\ .\]
All elements appearing before $T_{\gamma_{\mu}}$ have indices in $J$ and therefore satisfy $e_{\mu}T_i=q e_{\mu}$. The additional powers of $q$ appearing are cancelled by the presence of $T_{\gamma'_{\mu}}^{-1}$ next to $e_{\mu}$ on the other side. 

\paragraph{The second tentative generator $X^{\mu}_{N}$.} Now denote $w_{N+1}$ the longest element of the symmetric group $S_{N+1}$ and consider the corresponding Kazhdan--Lusztig element in $H(S_{N+1})$:
\begin{equation}\label{antisymmetriser}
C^{\da}_{w_{N+1}}=\sum_{w\in S_{N+1}}(-1)^{\ell(w)}q^{\ell(w_{N+1})-\ell(w)}T_w\ .
\end{equation}
It is called the (unnormalised) $q$-antisymmetriser of $H(S_{N+1})$. Then, we see this element as an element of $H(S_n)$ by the natural embedding of $H(S_{N+1})$ into $H(S_n)$. Finally, we are ready to define our second candidate for a generator.
\begin{defi}\label{defXmuN}
We define
\begin{equation}\label{def_Xmu}
X^{\mu}_N=e_{\mu}T_{\gamma_{\mu}}C^{\da}_{w_{N+1}}T_{\gamma_{\mu}}^{-1}e_{\mu}\ .
\end{equation}
where $T_{\gamma_{\mu}}$ was introduced above in (\ref{TGamma}).
\end{defi}
Note the other possible formulas for $X^{\mu}_N$ from (\ref{Tgammap}). We will give others below, see Remark \ref{rema_Tgamma}. We concede that the algebraic definition of the generator looks complicated. However it has a nice and relatively simple diagrammatic interpretation in terms of fused braids as developed in \cite{CP23}. We illustrate this with an example.

\begin{exam}\label{exam_gen}
For example, take $\mu=(2,2,2)$ and $N=2$. In the Hecke algebra, the element $C^{\da}_{w_{3}}$ of $H(S_3)$  is depicted as:
\begin{center}
 \begin{tikzpicture}[scale=0.25]
\node at (-1,10) {$\scriptstyle{q^3}$};
\fill (1,12) circle (0.2cm);\fill (1,8) circle (0.2cm);
\draw[thick] (1,12) -- (1,8);
\fill (4,12) circle (0.2cm);\fill (4,8) circle (0.2cm);
\draw[thick] (4,12) -- (4,8);
\fill (7,12) circle (0.2cm);\fill (7,8) circle (0.2cm);
\draw[thick] (7,12) -- (7,8);
\node at (9,10) {$\scriptstyle{-q^2}$};
\fill (11,12) circle (0.2cm);\fill (11,8) circle (0.2cm);
\draw[thick] (14,12)..controls +(0,-2) and +(0,+2) .. (11,8);\fill[white] (12.5,10) circle (0.4);
\draw[thick] (11,12)..controls +(0,-2) and +(0,+2) .. (14,8);
\fill (14,12) circle (0.2cm);\fill (14,8) circle (0.2cm);
\fill (17,12) circle (0.2cm);\fill (17,8) circle (0.2cm);
\draw[thick] (17,12) -- (17,8);
\node at (19,10) {$\scriptstyle{-q^2}$};
\fill (21,12) circle (0.2cm);\fill (21,8) circle (0.2cm);
\draw[thick] (21,12) -- (21,8);
\fill (24,12) circle (0.2cm);\fill (24,8) circle (0.2cm);
\draw[thick] (27,12)..controls +(0,-2) and +(0,+2) .. (24,8);\fill[white] (25.5,10) circle (0.4);
\fill (27,12) circle (0.2cm);\fill (27,8) circle (0.2cm);
\draw[thick] (24,12)..controls +(0,-2) and +(0,+2) .. (27,8);
\node at (29,10) {$\scriptstyle{+q}$};
\draw[thick] (37,12)..controls +(0,-2) and +(0,+2) .. (31,8);\fill[white] (33,9.6) circle (0.4);\fill[white] (35,10.4) circle (0.4);
\fill (31,12) circle (0.2cm);\fill (31,8) circle (0.2cm);
\draw[thick] (31,12)..controls +(0,-2) and +(0,+2) .. (34,8);
\fill (34,12) circle (0.2cm);\fill (34,8) circle (0.2cm);
\draw[thick] (34,12)..controls +(0,-2) and +(0,+2) .. (37,8);
\fill (37,12) circle (0.2cm);\fill (37,8) circle (0.2cm);
\node at (39,10) {$\scriptstyle{+q}$};
\fill (41,12) circle (0.2cm);\fill (41,8) circle (0.2cm);
\draw[thick] (47,12)..controls +(0,-2) and +(0,+2) .. (44,8);
\draw[thick] (44,12)..controls +(0,-2) and +(0,+2) .. (41,8);
\fill[white] (43,10.4) circle (0.4);\fill[white] (45,9.6) circle (0.4);
\draw[thick] (41,12)..controls +(0,-2) and +(0,+2) .. (47,8);
\fill (44,12) circle (0.2cm);\fill (44,8) circle (0.2cm);
\fill (47,12) circle (0.2cm);\fill (47,8) circle (0.2cm);
\node at (49,10) {$\scriptstyle{-}$};
\fill (51,12) circle (0.2cm);\fill (51,8) circle (0.2cm);
\fill (54,12) circle (0.2cm);\fill (54,8) circle (0.2cm);
\fill (57,12) circle (0.2cm);\fill (57,8) circle (0.2cm);
\draw[thick] (57,12)..controls +(0,-3) and +(0,+1) .. (51,8);
\fill[white] (54,9.2) circle (0.3);
\draw[thick] (54,12) -- (54,8);
\fill[white] (54,10.8) circle (0.3);\fill[white] (55.7,10) circle (0.3);
\draw[thick] (51,12)..controls +(0,-1) and +(0,+3) .. (57,8);
\node at (60,10) {.};
\end{tikzpicture}
\end{center}
This is the sum over all standard basis elements of $H(S_3)$ and the coefficient in front of the basis element $T_w$ is $(-1)^{\ell(w)}q^{\ell(w_3)-\ell(w)}$ where $w_3$ is the longest element (here of length 3).

Here is the diagrammatic depiction of the element $X^{\mu}_N$ defined above:
\begin{center}
 \begin{tikzpicture}[scale=0.25] 
\node at (-1,0) {$\scriptstyle{q^3}$};
\fill (1,2) ellipse (0.6cm and 0.2cm);\fill (1,-2) ellipse (0.6cm and 0.2cm);
\draw[thick] (0.8,2) -- (0.8,-2);\draw[thick] (1.2,2) -- (1.2,-2);
\fill (4,2) ellipse (0.6cm and 0.2cm);\fill (4,-2) ellipse (0.6cm and 0.2cm);
\draw[thick] (3.8,2) -- (3.8,-2);\draw[thick] (4.2,2) -- (4.2,-2);
\fill (7,2) ellipse (0.6cm and 0.2cm);\fill (7,-2) ellipse (0.6cm and 0.2cm);
\draw[thick] (6.8,2) -- (6.8,-2);\draw[thick] (7.2,2) -- (7.2,-2);
\node at (9,0) {$\scriptstyle{-q^{2}}$};
\fill (11,2) ellipse (0.6cm and 0.2cm);\fill (11,-2) ellipse (0.6cm and 0.2cm);!
\draw[thick] (10.8,2) -- (10.8,-2);\draw[thick] (13.8,2)..controls +(0,-2) and +(0,+2) .. (11.2,-2);\fill[white] (12.5,0) circle (0.4);
\fill (14,2) ellipse (0.6cm and 0.2cm);\fill (14,-2) ellipse (0.6cm and 0.2cm);
\draw[thick] (14.2,2) -- (14.2,-2);\draw[thick] (11.2,2)..controls +(0,-2) and +(0,+2) .. (13.8,-2);
\fill (17,2) ellipse (0.6cm and 0.2cm);\fill (17,-2) ellipse (0.6cm and 0.2cm);
\draw[thick] (16.8,2) -- (16.8,-2);\draw[thick] (17.2,2) -- (17.2,-2);
\node at (19,0) {$\scriptstyle{-q^{2}}$};
\fill (21,2) ellipse (0.6cm and 0.2cm);\fill (21,-2) ellipse (0.6cm and 0.2cm);
\draw[thick] (20.8,2) -- (20.8,-2);\draw[thick] (21.2,2) -- (21.2,-2);
\fill (24,2) ellipse (0.6cm and 0.2cm);\fill (24,-2) ellipse (0.6cm and 0.2cm);
\draw[thick] (23.8,2) -- (23.8,-2);\draw[thick] (26.8,2)..controls +(0,-2) and +(0,+2) .. (24.2,-2);\fill[white] (25.5,0) circle (0.4);
\fill (27,2) ellipse (0.6cm and 0.2cm);\fill (27,-2) ellipse (0.6cm and 0.2cm);
\draw[thick] (24.2,2)..controls +(0,-2) and +(0,+2) .. (26.8,-2);\draw[thick] (27.2,2) -- (27.2,-2);
\node at (29,0) {$\scriptstyle{+}$};
\draw[thick] (36.8,2)..controls +(0,-2) and +(0,+2) .. (31.2,-2);\draw[thick] (37.2,2) -- (37.2,-2);
\fill[white] (33,-0.4) circle (0.3);\fill[white] (35,0.4) circle (0.3);;\fill[white] (34,0) circle (0.3);
\fill (31,2) ellipse (0.6cm and 0.2cm);\fill (31,-2) ellipse (0.6cm and 0.2cm);
\draw[thick] (30.8,2) -- (30.8,-2);\draw[thick] (31.2,2)..controls +(0,-2) and +(0,+2) .. (33.8,-2);
\fill (34,2) ellipse (0.6cm and 0.2cm);\fill (34,-2) ellipse (0.6cm and 0.2cm);
\draw[thick] (33.8,2) -- (34.2,-2);\draw[thick] (34.2,2)..controls +(0,-2) and +(0,+2) .. (36.8,-2);
\fill (37,2) ellipse (0.6cm and 0.2cm);\fill (37,-2) ellipse (0.6cm and 0.2cm);
\node at (39,0) {$\scriptstyle{+q^{2}}$};
\draw[thick] (46.8,2)..controls +(0,-2) and +(0,+2) .. (44.2,-2);
\draw[thick] (43.8,2)..controls +(0,-2) and +(0,+2) .. (41.2,-2);
\fill[white] (43,0.4) circle (0.3);\fill[white] (45,-0.4) circle (0.3);
\draw[thick] (41.2,2)..controls +(0,-2) and +(0,+2) .. (46.8,-2);
\fill[white] (44,0) circle (0.3);
\fill (41,2) ellipse (0.6cm and 0.2cm);\fill (41,-2) ellipse (0.6cm and 0.2cm);
\draw[thick] (40.8,2) -- (40.8,-2);
\fill (44,2) ellipse (0.6cm and 0.2cm);\fill (44,-2) ellipse (0.6cm and 0.2cm);
\draw[thick] (44.2,2) -- (43.8,-2);
\fill (47,2) ellipse (0.6cm and 0.2cm);\fill (47,-2) ellipse (0.6cm and 0.2cm);
\draw[thick] (47.2,2) -- (47.2,-2);
\node at (49,0) {$\scriptstyle{-}$};
\draw[thick] (56.8,2)..controls +(0,-3) and +(0,+1) .. (51.2,-2);
\fill[white] (53.5,-0.8) circle (0.3);\fill[white] (54.5,-0.4) circle (0.3);
\draw[thick] (53.8,2)..controls +(-0.5,-1) and +(-0.5,1) .. (53.8,-2);
\fill[white] (53.5,0.85) circle (0.3);
\fill[white] (54,0.8) circle (0.3);\fill[white] (55.6,0) circle (0.3);
\draw[thick] (51.2,2)..controls +(0,-1) and +(0,+3) .. (56.8,-2);
\fill[white] (54.5,0.4) circle (0.3);
\draw[thick] (54.2,2)..controls +(0.5,-1) and +(0.5,1) .. (54.2,-2);
\draw[thick] (50.8,2) -- (50.8,-2);
\fill (51,2) ellipse (0.6cm and 0.2cm);\fill (51,-2) ellipse (0.6cm and 0.2cm);
\fill (54,2) ellipse (0.6cm and 0.2cm);\fill (54,-2) ellipse (0.6cm and 0.2cm);
\fill (57,2) ellipse (0.6cm and 0.2cm);\fill (57,-2) ellipse (0.6cm and 0.2cm);
\draw[thick] (57.2,2) -- (57.2,-2);
\node at (60,0) {.};
\end{tikzpicture}
\end{center}
Each term is seen as an element of $\Hmu=e_{\mu}H(S_6)e_{\mu}$ as follows. The strands represent an element of $H(S_6)$ in the usual way, while the black ellipses represent $q$-symmetrisers (here on two strands since $\mu=(2,2,2)$). So the three black ellipses on top and on bottom represents the left and right multiplication by $e_{\mu}$.

The diagrammatic procedure is as follows: Start with a standard basis element of $H(S_3)$; promote the dots into black ellipses; add to each ellipse vertical strands so that there are two strands attached to each ellipse. We add these strands on top of the picture (with respect to the sheet of paper on which it is drawn). We apply this procedure to each term in $C^{\da}_{w_{3}}$. 

Note that we have used the property $T_i^{\pm1}e_{\mu}=q^{\pm1}e_{\mu}$ if $i\in J$ to simplify some crossings and to have elements which are minimal-length representatives. In doing so, the coefficients changed a little, but the rule is simple. What we get is that in front of an element corresponding to a minimal-length representative $r^-(\cD)$, the cofficient is $(-1)^{\sharp}q^{\ell(w_3)-\sharp}$, where $\sharp$ is the number of crossings counted with signs.
\end{exam}

\begin{rema}
There is a connection between the longest element $w_{N+1}$ and the permutation $\tilde{w}_{N+1}$ introduced in Definition \ref{defYmuN}. This is through the permutation $\gamma_{\mu}$ introduced above in (\ref{TGamma}) and it goes as follows. Under the RS correspondence, the element $w_{N+1}$ corresponds to $(\textbf{t}_{N+1},\textbf{t}_{N+1})$, where $\textbf{t}_{N+1}$ is the standard tableau of shape $\Hk_{N+1,n}$ which has the following entries in the first column:
\[1,2,3,\dots,N+1\ .\]
Note that the descent of $\textbf{t}_{N+1}$ is not disjoint from $J$. The only standard tableau of this shape with descent disjoint from $J$ was denoted $\tilde{\textbf{t}}_{N+1}$ in Definition \ref{defYmuN} and corresponds to the permutation $\tilde{w}_{N+1}$. Now it is easy to check that the permutation $\gamma_{\mu}$ is exactly the permutation relating the two standard tableaux $\textbf{t}_{N+1}$ and $\tilde{\textbf{t}}_{N+1}$:
\[\tilde{\textbf{t}}_{N+1}=\gamma_{\mu}(\textbf{t}_{N+1})\ ,\]
where the right hand side means $\gamma_{\mu}$ applied to the entry of $\textbf{t}_{N+1}$. This can be seen as the heuristic behind the conjectures of the next section.
\end{rema}

\subsubsection{Conjectures and partial results}

\paragraph{The conjectures.} To summarise, we have introduced in Definitions \ref{defYmuN} and \ref{defXmuN} two elements of the algebra $\Hmu$ in the following form:
\[X^{\mu}_N=e_{\mu}T_{\gamma_{\mu}}C^{\da}_{w_{N+1}}T_{\gamma_{\mu}}^{-1}e_{\mu} \ \ \ \text{and}\ \ \ \ Y^{\mu}_N=e_{\mu}C^{\da}_{\tilde{w}_{N+1}}e_{\mu}\ .\]
Now we formulate our conjectures regarding these elements and the ideal $I^{\mu}_N$ we are interested in.

\begin{conj}\label{conjectures} $\ $
\begin{enumerate}
\item[a)] The element $X^{\mu}_N$ generates the ideal $I^{\mu}_{N}$.
\item[b)] The element $Y^{\mu}_N$ generates the ideal $I^{\mu}_{N}$.
\item[c)] The two elements $X^{\mu}_N$ and $Y^{\mu}_N$ are equal.
\end{enumerate}
\end{conj}
Statement a) above was conjectured in \cite[\S 9]{CP23}. In there, it is shown, first, that the element $X^{\mu}_N$ does belong to the ideal $I_N^{\mu}$. Moreover statement a) was proved in some particular cases, namely for $N=2$ and any $\mu$, and for any $N$ if $\mu$ either contains only $1$'s and $2$'s, or contains only a single part different than $1$. We refer to \cite{CP23} for more details.

The novelty of the present work is to conjecture statements b) and c). Needless to say that if c) is true then statements a) and b) become equivalent (but of course a) and b) can both be true while c) is not). We will manage below to give some supporting evidence for c) in general and to prove all statements a), b) and c) in some special cases.

\begin{rema}
It is also conjectured in \cite[Conjecture 9.3]{CP23} that the (conjectural) generator $X^{\mu}_N$ of the ideal is central in $\Hmu$.
\end{rema}

\paragraph{Partial results.} 
Recall that the element $Y^{\mu}_N=e_{\mu}C^{\da}_{\tilde{w}_{N+1}}e_{\mu}$ has in particular the important property of being stable by the bar involution $\overline{\,\cdot\,}$, from the general property of Kazhdan--Lusztig elements, see Proposition \ref{prop-CT-par2}. The bar involution is the morphism sending each $T_i$ to its inverse $T_i^{-1}$ and $q$ to $q^{-1}$. What we can show in general is that this stability property is satisfied by the element $X^{\mu}_N$.
\begin{prop}
We have:
\[\overline{X^\mu_N}=X^\mu_N\,,\ \ \ \text{namely,}\ \ \ e_{\mu}T_{\gamma_{\mu}}C^{\da}_{w_{N+1}}T_{\gamma_{\mu}}^{-1}e_{\mu}=e_{\mu}\overline{T_{\gamma_{\mu}}}C^{\da}_{w_{N+1}}\overline{T_{\gamma_{\mu}}}^{-1}e_{\mu}\ .\]
\end{prop}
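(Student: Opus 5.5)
The plan is to compare $X^{\mu}_N$ and its bar image term by term, after reducing to a statement about braids. Since $\overline{e_{\mu}}=e_{\mu}$ by (\ref{barPJ}) and $\overline{C^{\da}_{w_{N+1}}}=C^{\da}_{w_{N+1}}$, we have $\overline{X^\mu_N}=e_{\mu}\overline{T_{\gamma_{\mu}}}\,C^{\da}_{w_{N+1}}\,\overline{T_{\gamma_{\mu}}}^{-1}e_{\mu}$. Here $\overline{T_{\gamma_{\mu}}}$ is the same word (\ref{TGamma}) with every $T_i$ replaced by $T_i^{-1}$. Diagrammatically, the strands $2,\dots,N+1$ now pass \emph{under} the ``filler'' strands instead of over them.

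Expanding $C^{\da}_{w_{N+1}}$ by (\ref{antisymmetriser}), it suffices to prove, for every $w\in S_{N+1}$,
\[e_{\mu}T_{\gamma_{\mu}}T_wT_{\gamma_{\mu}}^{-1}e_{\mu}=e_{\mu}\overline{T_{\gamma_{\mu}}}\,T_w\,\overline{T_{\gamma_{\mu}}}^{-1}e_{\mu}\,.\]
More generally I would prove it with $T_w$ replaced by any braid word $\beta$ in $T_1^{\pm1},\dots,T_N^{\pm1}$. This is a statement in the image of the braid group, so no quadratic relation is needed, only braid relations together with the absorption rule $e_{\mu}T_j^{\pm1}=q^{\pm1}e_{\mu}$ and $T_j^{\pm1}e_{\mu}=q^{\pm1}e_{\mu}$ for $j\in J$.

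The model computation is $N=1$, $\mu=(2,1)$, where $T_{\gamma_\mu}=T_2$ and $J=\{1\}$. Braid relations give $T_2T_1T_2^{-1}=T_1^{-1}T_2T_1$ and $T_2^{-1}T_1T_2=T_1T_2T_1^{-1}$. After sandwiching by $e_{\mu}$, the outer crossings $T_1^{\pm1}$ are absorbed with opposite powers of $q$ that cancel, so both expressions equal $e_{\mu}T_2e_{\mu}$. The general proof should be an iteration of this mechanism. I would switch the filler strands from ``under'' to ``over'' one at a time, by induction on the number of crossings in which they differ.

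The local step is as follows. Take a single filler strand lying in block $a$ (so it belongs to the same $q$-symmetriser as a marked strand $i_a$) that crosses a marked strand. In the conjugated braid $T_{\gamma}\beta T_{\gamma}^{-1}$ this filler strand has both endpoints in block $a$. By an isotopy (braid relations only) it can be slid across the whole middle part $\beta$, so that its only crossings become crossings with the adjacent strands of its own block at the very top and bottom. The crossing type (over/under) of the filler with the marked strands becomes irrelevant after this slide. The remaining crossings are generators $T_j^{\pm1}$ with $j\in J$ adjacent to $e_{\mu}$, which are absorbed. The powers of $q$ produced at the top and bottom cancel because they come from $T_{\gamma}$ and $T_{\gamma}^{-1}$ respectively. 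I would first use the freedom (\ref{Tgammap}) to choose the marked strand $i_a$ adjacent to the filler strand being moved, which simplifies this step.

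The main obstacle is making this sliding argument rigorous as an algebraic identity rather than a picture. One has to show that the filler strand, once conjugated, is a ``pure'' strand that can be pushed past the braid $\beta$ on the marked strands. One also has to bookkeep the $q$-exponents so that they really cancel, including when the filler strand passes between two marked strands. If the direct isotopy is too cumbersome, my fallback is an induction on $N$. I would write $T_{\gamma_\mu}=T_{\gamma_{\mu'}}\cdot(T_{\mu_1+\dots+\mu_N}\cdots T_{N+1})$, where $\mu'$ merges the last two blocks, and prove the one-block identity using $T_{\mu_1+\dots+\mu_N}\cdots T_{N+1}$ versus its bar by the $N=1$ computation above repeated along the block.
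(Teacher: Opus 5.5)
Your reduction to individual terms is where the argument breaks. The identity
\[e_{\mu}T_{\gamma_{\mu}}T_wT_{\gamma_{\mu}}^{-1}e_{\mu}=e_{\mu}\overline{T_{\gamma_{\mu}}}\,T_w\,\overline{T_{\gamma_{\mu}}}^{-1}e_{\mu}\]
is false in general, and so is its extension to braid words $\beta$.

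Take $N=2$ and $\mu=(1,2,1)$, so that $J=\{2\}$, $e_{\mu}=(1+q^2)^{-1}(1+qT_2)$ and $T_{\gamma_{\mu}}=T_3$. For $w=s_1s_2$, braid relations give $T_3T_1T_2T_3^{-1}=T_1T_2^{-1}T_3T_2$ and $T_3^{-1}T_1T_2T_3=T_1T_2T_3T_2^{-1}$. Using $T_2e_{\mu}=qe_{\mu}$ and $T_2^{-1}=T_2-(q-q^{-1})$, you get
\[e_{\mu}T_3T_1T_2T_3^{-1}e_{\mu}-e_{\mu}T_3^{-1}T_1T_2T_3e_{\mu}=(q-q^{-1})\bigl(e_{\mu}T_1T_2T_3e_{\mu}-q\,e_{\mu}T_1T_3e_{\mu}\bigr).\]
This is non-zero for $q^2\neq1$, because $s_1s_2s_3$ and $s_1s_3$ are minimal-length representatives of two different double cosets. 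The discrepancies for $s_1s_2$, $s_2s_1$ and $s_1s_2s_1$ cancel only in the signed sum that forms $C^{\da}_{w_3}$.

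Your local sliding step fails for the same reason. The idempotent $e_{\mu}$ absorbs crossings between strands of the same block only. It does not make the over/under type of a filler strand's crossings with marked strands of other blocks irrelevant. Since braid relations and absorption are valid identities, no argument built from them alone can prove a false statement. Your $N=1$ computation works only because there (as for $\mu=(\mu_1,1,\dots,1)$) conjugation by a suitable $T_{\gamma'_{\mu}}$ is just a relabelling of generators. Your fallback inherits the same defect, since it repeats that term-by-term computation along a block.

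The missing ingredient is the antisymmetriser property $T_iC^{\da}_{w_{N+1}}=-q^{-1}C^{\da}_{w_{N+1}}$ for $1\le i\le N$, used together with $T_i=T_i^{-1}+(q-q^{-1})$. In the example above the whole statement is just
\[e_{\mu}T_3C^{\da}_{w_3}-e_{\mu}T_3^{-1}C^{\da}_{w_3}=(q-q^{-1})e_{\mu}C^{\da}_{w_3}=0,\]
which holds because $(1+qT_2)C^{\da}_{w_3}=0$.

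The paper's proof does this in general:
\begin{enumerate}
\item It proves only the one-sided identity $e_{\mu}T_{\gamma_{\mu}}C^{\da}_{w_{N+1}}=e_{\mu}\overline{T_{\gamma_{\mu}}}C^{\da}_{w_{N+1}}$. The other side follows by applying $\iota$ and then the bar involution.
\item It flips the crossings of $T_{\gamma_{\mu}}$ one at a time, from left to right. Each error term has the form $e_{\mu}XC^{\da}_{w_{N+1}}$, where $X$ is the braid with one crossing deleted.
\item Each error term vanishes: a generator $T_{i_a-1}$ with $i_a-1\in J$, added on top, can be transported to the bottom as a conjugate of $T_N$. The first is absorbed as $q$ and the second as $-q^{-1}$, so $(q+q^{-1})\,e_{\mu}XC^{\da}_{w_{N+1}}=0$.
\item An induction on $N$, dropping the last block, handles the factor $T_{\gamma_{\mu^-}}$.
\end{enumerate}
Your fallback has the right inductive shape, but its inductive step would need this vanishing argument rather than a braid identity.
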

\begin{proof} We are going to show the following identity:
\begin{equation}\label{PTCbar}
e_{\mu}T_{\gamma_{\mu}}C^{\da}_{w_{N+1}}=e_{\mu}\overline{T_{\gamma_{\mu}}}C^{\da}_{w_{N+1}}\ .
\end{equation}
Assume this identity is verified and apply to it the antiautomorphism $\iota$ of $H_n$ sending each generator $T_i$ to itself, followed by the involution $\overline{\,\cdot\,}$. The map $\iota$ leaves invariant both $e_{\mu}$ and $C^{\da}_{w_{N+1}}$ (see (\ref{basic-prop-P}) and (\ref{antisymmetriser})) as does the involution $\overline{\,\cdot\,}$ (see (\ref{barPJ})). The composition of both maps sends $T_{\gamma_{\mu}}$ to $T_{\gamma_{\mu}}^{-1}$. Therefore, we obtain from (\ref{PTCbar})
\[C^{\da}_{w_{N+1}}T_{\gamma_{\mu}}^{-1}e_{\mu}=C^{\da}_{w_{N+1}}\overline{T_{\gamma_{\mu}}}^{-1}e_{\mu}\ .\]
Combined with (\ref{PTCbar}), this proves the statement of the proposition.

The proof of (\ref{PTCbar}) will be using induction on $N$. First take $N=1$, so that $\mu=(\mu_1,\mu_2)$. In this case, we have:
\[T_{\gamma_{\mu}}=T_{\mu_1}\dots T_2\ \ \ \ \text{and}\ \ \ \ C^{\da}_{w_{N+1}}=q-T_1\ .\]
Let $i\in\{2,\dots,\mu_1\}$. We claim that:
\begin{equation}\label{relN1}
e_{\mu}T^{\pm1}_{\mu_1}\dots T^{\pm1}_{i+1}\check T_iT_{i-1} \dots T_{2}(q-T_1)=0\ .
\end{equation}
To be precise, the exponants $\pm1$ means that we can choose independently $+1$ or $-1$ for all generators to the left of $T_i$, and the notation $\check T_i$ means that $T_i$ is omitted. The statement follows from the fact that $T_{i-1} \dots T_{2}(q-T_1)$ commutes to the left and hits $e_{\mu}$ where each element $T_1,T_2,\dots,T_{i-1}$ is replaced by $q$. In particular, the factor $(q-T_1)$ gives $0$.

Now, in $e_{\mu}T_{\gamma_{\mu}}C^{\da}_{w_{N+1}}$, we can replace (from left to right) all the $T_i$'s in $T_{\gamma_{\mu}}$ by their inverses, using the relation $T_i=T_i^{-1}+(q-q^{-1})$ and (\ref{relN1}). This proves (\ref{PTCbar}) for $N=1$.

\medskip
Next, let $N>1$. Setting $\mu^-=(\mu_1,\dots,\mu_N)$, we have:
\[T_{\gamma_\mu}=T_{\gamma_{\mu^-}}\cdot T_{\mu_1+\dots+\mu_N} \dots T_{N+1}\ \ \ \ \text{with}\ T_{\gamma_{\mu^-}}\in\langle T_1,\dots,T_{\mu_1+\dots+\mu_{N-1}}\rangle\ .\]

Similarly as for $N=1$, we start by proving that
\begin{equation}\label{element_diagrammaticproof}
e_{\mu}T_{\gamma_{\mu^-}}T^{\pm1}_{\mu_1+\dots+\mu_N}\dots T^{\pm1}_{i+1}\check T_iT_{i-1} \dots T_{N+1}C^{\da}_{w_{N+1}}=0\ \ \ \ \ \forall i=N+1,\dots,\mu_1+\dots+\mu_N\ .
\end{equation}
This is the difficult part of the proof. Recall that the element $T_{\gamma_{\mu}}$ is as follows:
\begin{center}
 \begin{tikzpicture}[scale=0.3]
\draw (0.8,5.5) -- (0.8,6) -- (9.2,6) -- (9.2,5.5);\node at (5,7) {$\mu_1$};
\fill (1,5) circle (0.2);\fill (1,-5) circle (0.2);\node at (1,-6) {\scriptsize{$1$}};
\fill (2,5) circle (0.2);\fill (2,-5) circle (0.2);\node at (2,-6) {\scriptsize{$2$}};
\fill (3,5) circle (0.2);\node at (5,5) {$\dots$};
\node at (4,-5) {$\dots$};
\fill (6,-5) circle (0.2);
\node at (6,-6) {\scriptsize{$a$}};
\fill (7,-5) circle (0.2);
\node at (8.5,-5) {$\dots$};
\fill (10,-5) circle (0.2);
\node at (10,-6) {\scriptsize{$N+1$}};

\fill (8,5)  circle (0.2);
\fill (9,5) circle (0.2);

\draw (9.8,5.5) -- (9.8,6) -- (16.2,6) -- (16.2,5.5);\node at (13,7) {$\mu_2$};
\fill (10,5) circle (0.2);\fill (13,-5) circle (0.2);
\fill (11,5) circle (0.2);\fill (14,-5) circle (0.2);
\node at (13,5) {$\dots$};\node at (16,-5) {$\dots$};
\fill (19,-5) circle (0.2);
\fill (20,-5) circle (0.2);

\node at (18.5,5) {$\dots$};

\draw (20.8,5.5) -- (20.8,6.5) -- (27.2,6.5) -- (27.2,5.5);\node at (24,7) {$\mu_a$};
\fill (21,5) circle (0.2);\fill (22,-5) circle (0.2);
\fill (24,5) circle (0.2);\node at (24,5.8) {\scriptsize{$i_a$}};
\node at (25.5,-5) {$\dots$};
\fill (28,5) circle (0.2);

\node at (32.5,5) {$\dots$};
\draw (36.8,5.5) -- (36.8,6) -- (43.2,6) -- (43.2,5.5);\node at (40,7) {$\mu_{N+1}$};
\fill (37,5) circle (0.2);
\node at (40,5) {$\dots$};

\draw[thick] (10,5)..controls +(0,-4) and +(0,+4) .. (2,-5);
\draw[thick] (21,5)..controls +(0,-4) and +(0,+4) .. (6,-5);
\draw[thick] (28,5)..controls +(0,-4) and +(0,+4) .. (7,-5);
\draw[thick] (37,5)..controls +(0,-4) and +(0,+4) .. (10,-5);

\fill[white] (7.1,0.8) circle (0.3);\fill[white] (6.7,0.5)circle (0.3);
\fill[white] (10.5,-1.1) circle (0.3);\fill[white] (10.1,-1.4)circle (0.3);
\fill[white] (11.5,-1.8) circle (0.4);\fill[white] (11,-2.1)circle (0.3);
\fill[white] (12.7,-2.7) circle (0.4);\fill[white] (12.2,-3)circle (0.3);
\fill[white] (9.5,3.35) circle (0.35);\fill[white] (9.2,2.8)circle (0.35);
\fill[white] (14,0.3) circle (0.3);\fill[white] (13.6,0.1)circle (0.35);
\fill[white] (15.4,-0.5) circle (0.4);\fill[white] (15,-0.7)circle (0.35);
\fill[white] (16.9,-1.5) circle (0.4);\fill[white] (16.4,-1.7)circle (0.3);
\fill[white] (15.2,0.8) circle (0.3);
\fill[white] (16.8,-0.2) circle (0.3);
\fill[white] (18.5,-1.2) circle (0.3);
\fill[white] (24.7,2.2) circle (0.3);
\fill[white] (25.8,0.6) circle (0.3);
\draw (25.8,0.6) circle (0.8);

\draw[thick] (1,5) -- (1,-5);
\draw[thick] (2,5)..controls +(0,-4) and +(0,+4) .. (13,-5);
\draw[thick] (3,5)..controls +(0,-4) and +(0,+4) .. (14,-5);
\draw[thick] (8,5)..controls +(0,-4) and +(0,+4) .. (19,-5);
\draw[thick] (9,5)..controls +(0,-4) and +(0,+4) .. (20,-5);
\draw[thick] (11,5)..controls +(0,-4) and +(0,+4) .. (22,-5);
\draw[thick] (24,5)..controls +(0,-4) and +(0,+1) .. (28,-2);

\node at (50,0) {.};
\end{tikzpicture}
\end{center}
where we have circled the crossing that is removed when we remove $T_i$ in (\ref{element_diagrammaticproof}). This crossing involves the strand coming from dot number $\mu_1+\dots+\mu_N+1$ and another strand. Denote $i_a$ the dot from where this strand is coming. This $i_a$ belongs to one of the subsets, say the one of size $\mu_a$, and can not be the first element of this subset. The $\pm1$ crossings in (\ref{element_diagrammaticproof}) will also be on the strand coming from dot number $\mu_1+\dots+\mu_N+1$, and they will be to the right of the circled crossing and therefore will not interfere at all with our reasoning.

First, recall from (\ref{Tgammap}) the freedom that we have in $T_{\gamma_{\mu}}$. We use this freedom to move a little bit the starting point of the strands descending on dot number $a$. This does not change the left-hand-side of (\ref{element_diagrammaticproof}) up to an irrelevant factor (a power of $q$).  Doing so, the element in (\ref{element_diagrammaticproof}) between the idempotent $e_{\mu}$ and $C^{\da}_{w_{N+1}}$ is:
\begin{center}
 \begin{tikzpicture}[scale=0.3]
\draw (0.8,5.5) -- (0.8,6) -- (9.2,6) -- (9.2,5.5);\node at (5,7) {$\mu_1$};
\fill (1,5) circle (0.2);\fill (1,-5) circle (0.2);\node at (1,-6) {\scriptsize{$1$}};
\fill (2,5) circle (0.2);\fill (2,-5) circle (0.2);\node at (2,-6) {\scriptsize{$2$}};
\fill (3,5) circle (0.2);\node at (5,5) {$\dots$};
\node at (4,-5) {$\dots$};
\fill (6,-5) circle (0.2);
\node at (6,-6) {\scriptsize{$a$}};
\fill (7,-5) circle (0.2);
\node at (8.5,-5) {$\dots$};
\fill (10,-5) circle (0.2);
\node at (10,-6) {\scriptsize{$N+1$}};

\fill (8,5)  circle (0.2);
\fill (9,5) circle (0.2);

\draw (9.8,5.5) -- (9.8,6) -- (16.2,6) -- (16.2,5.5);\node at (13,7) {$\mu_2$};
\fill (10,5) circle (0.2);\fill (13,-5) circle (0.2);
\fill (11,5) circle (0.2);\fill (14,-5) circle (0.2);
\node at (13,5) {$\dots$};\node at (16,-5) {$\dots$};
\fill (19,-5) circle (0.2);
\fill (20,-5) circle (0.2);

\node at (18.5,5) {$\dots$};

\draw (20.8,5.5) -- (20.8,6.5) -- (27.2,6.5) -- (27.2,5.5);\node at (24,7) {$\mu_a$};
\fill (23,5) circle (0.2);\fill (22,-5) circle (0.2);
\fill (24,5) circle (0.2);\node at (24,5.8) {\scriptsize{$i_a$}};
\node at (25.5,-5) {$\dots$};
\fill (28,5) circle (0.2);

\node at (32.5,5) {$\dots$};
\draw (36.8,5.5) -- (36.8,6) -- (43.2,6) -- (43.2,5.5);\node at (40,7) {$\mu_{N+1}$};
\fill (37,5) circle (0.2);
\node at (40,5) {$\dots$};

\draw[thick] (10,5)..controls +(0,-4) and +(0,+4) .. (2,-5);
\draw[thick] (23,5)..controls +(0,-4) and +(0,+4) .. (6,-5);
\draw[thick] (28,5)..controls +(0,-4) and +(0,+4) .. (7,-5);
\draw[thick] (25,0.6)..controls +(0,-2) and +(0,+4) .. (10,-5);
\draw[thick] (37,5)..controls +(0,-4) and +(0,+4) .. (26,0.6);

\fill[white] (7.1,0.8) circle (0.3);\fill[white] (6.7,0.5)circle (0.3);
\fill[white] (10.8,-1.3) circle (0.35);\fill[white] (10.2,-1.5)circle (0.35);
\fill[white] (11.5,-1.8) circle (0.4);\fill[white] (11,-2.1)circle (0.3);
\fill[white] (12.7,-2.7) circle (0.4);\fill[white] (12.2,-3)circle (0.3);
\fill[white] (9.5,3.35) circle (0.35);\fill[white] (9.2,2.8)circle (0.35);
\fill[white] (14.4,-0.1) circle (0.35);\fill[white] (13.9,-0.4)circle (0.35);
\fill[white] (15.4,-0.5) circle (0.4);\fill[white] (15,-0.7)circle (0.35);
\fill[white] (16.9,-1.5) circle (0.4);\fill[white] (16.4,-1.7)circle (0.3);
\fill[white] (15.8,0.3) circle (0.3);
\fill[white] (16.8,-0.2) circle (0.3);
\fill[white] (18.5,-1.2) circle (0.3);
\fill[white] (24.6,2.2) circle (0.3);


\draw[thick] (1,5) -- (1,-5);
\draw[thick] (2,5)..controls +(0,-4) and +(0,+4) .. (13,-5);
\draw[thick] (3,5)..controls +(0,-4) and +(0,+4) .. (14,-5);
\draw[thick] (8,5)..controls +(0,-4) and +(0,+4) .. (19,-5);
\draw[thick] (9,5)..controls +(0,-4) and +(0,+4) .. (20,-5);
\draw[thick] (11,5)..controls +(0,-4) and +(0,+4) .. (22,-5);
\draw[thick] (24,5)..controls +(0,-2) and +(0,+1) .. (25,0.6);
\draw[thick] (26,0.6)..controls +(0,-1) and +(0,+1) .. (28,-2);


\end{tikzpicture}
\end{center}
where we have indeed removed the crossing circled in the preceding diagram. Now what we claim is that precomposing (on top) by the generator $T_{i_a-1}$ is equal to composing (at the bottom) by $T_{a}...T_{N-1}T_NT_{N-1}^{-1}\dots T_a^{-1}$. One has to look at the following picture, where one can see that the added crossing on top is transported to the braid added at the bottom.
\begin{center}
 \begin{tikzpicture}[scale=0.3]
\draw (0.8,5.5) -- (0.8,6) -- (9.2,6) -- (9.2,5.5);\node at (5,7) {$\mu_1$};
\fill (1,5) circle (0.2);\fill (1,-5) circle (0.2);\node at (1,-6) {\scriptsize{$1$}};
\fill (2,5) circle (0.2);\fill (2,-5) circle (0.2);\node at (2,-6) {\scriptsize{$2$}};
\fill (3,5) circle (0.2);\node at (5,5) {$\dots$};
\node at (4,-5) {$\dots$};
\fill (6,-5) circle (0.2);
\fill (7,-5) circle (0.2);
\node at (8.5,-5) {$\dots$};
\fill (10,-5) circle (0.2);

\fill (8,5)  circle (0.2);
\fill (9,5) circle (0.2);

\draw (9.8,5.5) -- (9.8,6) -- (16.2,6) -- (16.2,5.5);\node at (13,7) {$\mu_2$};
\fill (10,5) circle (0.2);\fill (13,-5) circle (0.2);
\fill (11,5) circle (0.2);\fill (14,-5) circle (0.2);
\node at (13,5) {$\dots$};\node at (16,-5) {$\dots$};
\fill (19,-5) circle (0.2);
\fill (20,-5) circle (0.2);

\node at (18.5,5) {$\dots$};

\fill (23,7) circle (0.2);\fill (22,-5) circle (0.2);
\fill (24,7) circle (0.2);
\node at (25.5,-5) {$\dots$};
\fill (28,5) circle (0.2);

\node at (32.5,5) {$\dots$};
\draw (36.8,5.5) -- (36.8,6) -- (43.2,6) -- (43.2,5.5);\node at (40,7) {$\mu_{N+1}$};
\fill (37,5) circle (0.2);
\node at (40,5) {$\dots$};

\draw[thick] (10,5)..controls +(0,-4) and +(0,+4) .. (2,-5);
\draw[thick] (23,5)..controls +(0,-4) and +(0,+4) .. (6,-5);
\draw[thick] (28,5)..controls +(0,-4) and +(0,+4) .. (7,-5);
\draw[thick] (25,0.6)..controls +(0,-2) and +(0,+4) .. (10,-5);
\draw[thick] (37,5)..controls +(0,-4) and +(0,+4) .. (26,0.6);

\fill[white] (7.1,0.8) circle (0.3);\fill[white] (6.7,0.5)circle (0.3);
\fill[white] (10.8,-1.3) circle (0.35);\fill[white] (10.2,-1.5)circle (0.35);
\fill[white] (11.5,-1.8) circle (0.4);\fill[white] (11,-2.1)circle (0.3);
\fill[white] (12.7,-2.7) circle (0.4);\fill[white] (12.2,-3)circle (0.3);
\fill[white] (9.5,3.35) circle (0.35);\fill[white] (9.2,2.8)circle (0.35);
\fill[white] (14.4,-0.1) circle (0.35);\fill[white] (13.9,-0.4)circle (0.35);
\fill[white] (15.4,-0.5) circle (0.4);\fill[white] (15,-0.7)circle (0.35);
\fill[white] (16.9,-1.5) circle (0.4);\fill[white] (16.4,-1.7)circle (0.3);
\fill[white] (15.8,0.3) circle (0.3);
\fill[white] (16.8,-0.2) circle (0.3);
\fill[white] (18.5,-1.2) circle (0.3);
\fill[white] (24.6,2.2) circle (0.3);


\draw[thick] (1,5) -- (1,-5);
\draw[thick] (2,5)..controls +(0,-4) and +(0,+4) .. (13,-5);
\draw[thick] (3,5)..controls +(0,-4) and +(0,+4) .. (14,-5);
\draw[thick] (8,5)..controls +(0,-4) and +(0,+4) .. (19,-5);
\draw[thick] (9,5)..controls +(0,-4) and +(0,+4) .. (20,-5);
\draw[thick] (11,5)..controls +(0,-4) and +(0,+4) .. (22,-5);
\draw[thick] (24,5)..controls +(0,-2) and +(0,+1) .. (25,0.6);
\draw[thick] (26,0.6)..controls +(0,-1) and +(0,+1) .. (28,-2);

\draw[thick] (24,7)..controls +(0,0) and +(0,0) .. (23,5);
\fill[white] (23.5,6) circle (0.3);
\draw[thick] (23,7)..controls +(0,0) and +(0,0) .. (24,5);
\draw[thick] (7,-5)..controls +(0,0) and +(0,0) .. (7,-8.3);
\fill[white] (7,-7.5) circle (0.3);
\draw[thick] (10,-5)..controls +(0,-2) and +(0,+1) .. (6,-8.3);
\fill[white] (7,-5.8) circle (0.3);
\fill[white] (8.7,-6.5) circle (0.3);
\draw[thick] (6,-5)..controls +(0,-1) and +(0,+2) .. (10,-8.3);
\end{tikzpicture}
\end{center}
The move is entirely happening below the strands descending from left to right, which therefore do not interfere with the procedure. To summarize, if we denote $X$ the element in (\ref{element_diagrammaticproof}) between the idempotent $e_{\mu}$ and $C^{\da}_{w_{N+1}}$, we have shown in particular that:
\[e_{\mu}T_{i_a-1}XC^{\da}_{w_{N+1}}=e_{\mu}XT_{a}...T_{N-1}T_NT_{N-1}^{-1}\dots T_a^{-1}C^{\da}_{w_{N+1}}\ .\]
To conclude we notice that $T_{i_a-1}$ is absorbed by $e_{\mu}$ and replaced by $q$, while $T_a\dots T_{N-1}T_NT_{N-1}^{-1}\dots T_a^{-1}$ is absorbed by $C^{\da}_{w_{N+1}}$ and replaced by $-q^{-1}$ (recall that $T_iC^{\da}_{w_{N+1}}=-q^{-1}C^{\da}_{w_{N+1}}$ for any $i=1,\dots,N$). Therefore we get:
\[(q+q^{-1})e_{\mu}XC^{\da}_{w_{N+1}}=0\,,\]
which implies that the element (\ref{element_diagrammaticproof}) is 0 as desired, since we assumed in this section that we worked over the field $\mathbb{C}(q)$ or that $q^2$ is not a too small root of unity.

\medskip
Now that we have shown (\ref{element_diagrammaticproof}), we deduce that, as we did for $N=1$, in 
$$e_{\mu}T_{\gamma_{\mu}}C^{\da}_{w_{N+1}}=e_{\mu}T_{\gamma_{\mu^-}}T_{\mu_1+\dots+\mu_N}\dots \dots T_{N+1}C^{\da}_{w_{N+1}}\ ,$$
we can replace (from left to right) all appearing $T_i$ by their inverses, using the relation $T_i=T_i^{-1}+(q-q^{-1})$ and (\ref{element_diagrammaticproof}). We thus have
\begin{equation}\label{eq:proofinduction}
e_{\mu}T_{\gamma_{\mu}}C^{\da}_{w_{N+1}}=e_{\mu}T_{\gamma_{\mu^-}}\overline{T}_{\mu_1+\dots+\mu_N}\dots \dots \overline{T}_{N+1}C^{\da}_{w_{N+1}}\ .
\end{equation}
Finally, to be able to use the induction hypothesis, we write:
\[e_{\mu}=e_{\mu^-}e_{\mu_{N+1}}\ \ \ \ \text{with}\ \ e_{\mu_{N+1}}\in\langle T_{\mu_1+\dots+\mu_N+1},\dots,T_{n-1}\rangle\ ,\]
and we note that $e_{\mu_{N+1}}$ commutes with $T_{\gamma_{\mu^-}}\in\langle T_1,\dots,T_{\mu_1+\dots+\mu_{N-1}}\rangle$. Furthermore, we write
\[C^{\da}_{w_{N+1}}=C^{\da}_{w_{N}}Y\ \ \ \ \text{for some elements $Y\in H_n $.}\]
The precise form of $Y$ is not important here, the important property is that $C^{\da}_{w_{N}}$ commutes with all $T_i$ with $i\geq N+1$. In particular, it commutes with $e_{\mu_{N+1}}$. Therefore, we rewrite (\ref{eq:proofinduction}) as
$$e_{\mu}T_{\gamma_{\mu}}C^{\da}_{w_{N+1}}=e_{\mu^-}T_{\gamma_{\mu^-}}C^{\da}_{w_{N}}e_{\mu_{N+1}}\overline{T}_{\mu_1+\dots+\mu_N}\dots \dots \overline{T}_{N+1}Y\ .$$
We use the induction hypothesis, namely, $e_{\mu^-}T_{\gamma_{\mu^-}}C^{\da}_{w_{N}}=e_{\mu^-}\overline{T_{\gamma_{\mu^-}}}C^{\da}_{w_{N}}$, and we put everything back to their original position to get finally
$$e_{\mu}T_{\gamma_{\mu}}C^{\da}_{w_{N+1}}=e_{\mu}\overline{T_{\gamma_{\mu^-}}}\,\overline{T}_{\mu_1+\dots+\mu_N}\dots \dots \overline{T}_{N+1}C^{\da}_{w_{N+1}}\ .$$
This concludes the proof of (\ref{PTCbar}) and in turn of the proposition.
\end{proof}

\begin{rema}\label{rema_Tgamma}
In the preceding proof, we have actually shown that in the expression:
\[X^\mu_N=e_{\mu}T_{\gamma_{\mu}}C^{\da}_{w_{N+1}}T_{\gamma_{\mu}}^{-1}e_{\mu}\ ,\]
one can replace any generator $T_i$ appearing in $T_{\gamma_{\mu}}$ by its inverse $T_i^{-1}$, and similarly, one can replace any $T_i^{-1}$ appearing in $T_{\gamma_{\mu}}^{-1}$ by $T_i$. For example, we have
\[X^\mu_N=e_{\mu}T_{\gamma_{\mu}}C^{\da}_{w_{N+1}}T_{\gamma^{-1}_{\mu}}e_{\mu}\ ,\]
which is an expression explicitly involving only the generators $T_i$ and never their inverses.
\end{rema}

We conclude this paper by proving the preceding conjectures in some special cases.
\begin{prop}\label{prop_conj}
In the following situations:
\begin{itemize}
\item for any $N\geq 1$ with $\mu=(\mu_1,1,1,\dots,1)$.
\item for $N=1$ and any $\mu=(\mu_1,\mu_2)$;
\item for $N=2$ and any $\mu=(\mu_1,\mu_2,\mu_3)$,
\end{itemize}
we have:
\[X^\mu_N=Y^\mu_N\ ,\]
and this element generates the ideal $I^{\mu}_N$.
\end{prop}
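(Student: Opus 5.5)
The plan is to split the statement into two parts: the equality $X^\mu_N=Y^\mu_N$, and the generation of $I^\mu_N$ by this common element. For generation, we rely on \cite{CP23}, where $X^\mu_N$ is shown to generate $I^\mu_N$ in exactly these cases. Indeed, $N=2$ is covered, and so are the cases where $\mu$ has a single part different from $1$, which includes $(\mu_1,1,\dots,1)$ and also $N=1$. For $N=1$ we can also argue directly: $I^\mu_1$ is then a single cell ideal (partitions with at least two rows dominated by the hook $(n-1,1)$, intersected with $\lambda\geq\mu^{ord}$) containing the one-dimensional cell of $Y^\mu_1$. Alternatively, $Y^\mu_N$ lies in the cell of $\Hk_{N+1,n}$, is nonzero, and spans a one-dimensional cell. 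In the semisimple setting, the two-sided ideal it generates therefore contains the whole block $\mathrm{End}(e_\mu(V_{\Hk_{N+1,n}}))$. So everything reduces to proving $X^\mu_N=Y^\mu_N$, after which generation follows from \cite{CP23}.

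For the equality we use the uniqueness part of Proposition \ref{prop-CT-par2}. We already know that $\overline{X^\mu_N}=X^\mu_N$ from the preceding proposition. It therefore suffices to expand $X^\mu_N$ in the standard basis $\{e_\mu T_{\rmd}e_\mu\}$ and check two things: the leading term is $(-1)^{\ell(\tilde w_{N+1})}e_\mu T_{\tilde w_{N+1}}e_\mu$, and every other coefficient lies in $q\mathbb{Z}[q]$ and is attached to a double coset $\cD'<\cD$, where $\cD$ is the double coset of $\tilde w_{N+1}$.

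To do this we use Remark \ref{rema_Tgamma}. We write $X^\mu_N=e_\mu T_{\gamma_\mu}C^\da_{w_{N+1}}T_{\gamma_\mu^{-1}}e_\mu$ and expand
\[C^\da_{w_{N+1}}=\sum_{w\in S_{N+1}}(-1)^{\ell(w)}q^{\ell(w_{N+1})-\ell(w)}T_w .\]
This gives a sum of terms $e_\mu T_{\gamma_\mu}T_wT_{\gamma_\mu^{-1}}e_\mu$. In each term, $\gamma_\mu w\gamma_\mu^{-1}$ permutes only the $N+1$ "first letters" of the blocks. The main computation is to show that each product $T_{\gamma_\mu}T_wT_{\gamma_\mu^{-1}}$, modulo absorbing $T_i$ with $i\in J$ into $e_\mu$ as a factor $q$, equals $q^{c(w)}e_\mu T_{x_w}e_\mu$ for an explicit minimal-length representative $x_w$. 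As in Example \ref{exam_gen}, $x_w$ should be the double coset of the "fused" permutation obtained by moving one strand per block, and the exponent $c(w)$ should be computed by the signed crossing count. One then checks three facts. First, the top term $w=w_{N+1}$ gives $x_w=\tilde w_{N+1}$ with total coefficient $\pm1$. Second, the exponent $\ell(w_{N+1})-\ell(w)+c(w)$ is positive for $w\neq w_{N+1}$. Third, distinct $w$ giving the same coset add up without cancellation issues, since only positivity of exponents matters.

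The main obstacle is that the product $T_{\gamma_\mu}T_wT_{\gamma_\mu^{-1}}$ need not be reduced, because the strand from the start of a block may cross strands inside that block. This produces quadratic-relation corrections $(q-q^{-1})$, which could yield terms with negative powers of $q$. We handle this case by case.

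For $\mu=(\mu_1,1,\dots,1)$, only the first block is fused. Then $\gamma_\mu$ is trivial on letters $\geq2$, i.e.\ $T_{\gamma_\mu}=1$ except for the shift inside the first block, and we can move strands of the first block freely using (\ref{Tgammap}). The computation becomes a one-boundary computation, which we hope to do by induction on $N$ mimicking the bar-invariance proof.

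For $N=1$, we have $X=e_\mu T_{\mu_1}\cdots T_2(q-T_1)T_2\cdots T_{\mu_1}e_\mu$, and $\tilde w_2=s_{\mu_1}$. Since $T_{\mu_1}\cdots T_2T_1T_2\cdots T_{\mu_1}$ lies in the double coset of $s_{\mu_1}$, we get $X=q^{2\mu_1-1}$-scaled corrections. This is a direct computation using $e_\mu T_{\mu_1-1}\cdots$ absorption, and we expect $X=q\,e_\mu-e_\mu T_{\mu_1}e_\mu$ up to the normalisation, which matches $e_\mu C^\da_{s_{\mu_1}}e_\mu$.

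For $N=2$, the six terms of $C^\da_{w_3}$ are treated explicitly as in Example \ref{exam_gen}. The delicate term is $w=w_3$, where the middle strand must be checked not to produce a non-positive power of $q$. Should the direct route fail there, the fallback is to compare both sides on the one-dimensional cell module and on the cell modules below it, using semisimplicity.
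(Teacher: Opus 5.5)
Your framework is sound: bar-invariance of $X^\mu_N$ plus the uniqueness statement of Proposition~\ref{prop-CT-par2}, with generation taken from \cite{CP23}. It is exactly what the paper does for $N=2$. The gap is that $X^\mu_N=Y^\mu_N$ is never actually established in any of the three cases. Moreover, the term-by-term mechanism you propose is false: the top term does not give coefficient $\pm1$, some $w\neq w_{N+1}$ give non-positive exponents, and cancellations between different $w$ are essential.

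\emph{The $N=1$ failure.} Take $\mu=(2,1)$ in your all-positive form, so $T_{\gamma_\mu}=T_{\gamma_\mu^{-1}}=T_2$.
The $w=e$ term is $q\,e_\mu T_2^2e_\mu=q\,e_\mu+(q^2-1)e_\mu T_2e_\mu$.
The $w=s_1$ term is $-e_\mu T_2T_1T_2e_\mu=-q^2e_\mu T_2e_\mu$.
So the top term carries $-q^2$, a lower term contributes $-1$ to the top coset, and the correct answer $q\,e_\mu-e_\mu T_2e_\mu$ appears only after cancellation.

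\emph{The $N=2$ failure.} The problem persists even with the paper's conjugation. Write $\tilde T_{\mu_1+\mu_2}=T_{\mu_1+1}^{-1}\cdots T_{\mu_1+\mu_2-1}^{-1}T_{\mu_1+\mu_2}T_{\mu_1+\mu_2-1}\cdots T_{\mu_1+1}$.
The $w=s_2s_1$ term is $q\,e_\mu\tilde T_{\mu_1+\mu_2}T_{\mu_1}e_\mu=q^{2-\mu_2}e_\mu T_{\mu_1+\mu_2}\cdots T_{\mu_1}e_\mu$, whose exponent is non-positive once $\mu_2\geq 2$.
It is cancelled only by the correction $(q^{\mu_2}-q^{2-\mu_2})e_\mu T_{\mu_1+\mu_2}\cdots T_{\mu_1}e_\mu$ coming from the top term $-e_\mu T_{\mu_1}\tilde T_{\mu_1+\mu_2}T_{\mu_1}e_\mu$.
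So cross-term cancellations must be computed, not argued away. Your fallback of comparing both sides on cell modules would need the action of $X^\mu_N$ on every irreducible in $I^\mu_N$, which is not available.

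\emph{The missing idea.} Use the freedom (\ref{Tgammap}) to replace $T_{\gamma_\mu}$ by $T_{\gamma'_\mu}=T_{\mu_1-1}\cdots T_1\,T_{\gamma_\mu}$.
For $\mu=(\mu_1,1,\dots,1)$, $\gamma_\mu$ is not trivial on letters $\geq2$, contrary to what you write. It sends $k\mapsto\mu_1+k-1$ for $2\leq k\leq N+1$ and has length $N(\mu_1-1)$. The braid relations give $T_{\gamma'_\mu}T_i=T_{\mu_1+i-1}T_{\gamma'_\mu}$ for $1\leq i\leq N$.
Since $\tilde w_{N+1}$ is the longest element on the consecutive letters $\mu_1,\dots,\mu_1+N$, this yields $T_{\gamma'_\mu}C^\da_{w_{N+1}}T_{\gamma'_\mu}^{-1}=C^\da_{\tilde w_{N+1}}$ already in $H(S_n)$. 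Hence $X^\mu_N=Y^\mu_N$ with no uniqueness argument at all.
The case $N=1$ is identical, using $T_{\gamma'_\mu}T_1=T_{\mu_1}T_{\gamma'_\mu}$; this is what justifies your ``expected'' formula $q\,e_\mu-e_\mu T_{\mu_1}e_\mu$.
For $N=2$ the same $T_{\gamma'_\mu}$ conjugates $T_1\mapsto T_{\mu_1}$ and $T_2\mapsto\tilde T_{\mu_1+\mu_2}$. You must then reduce $e_\mu T_{\mu_1}\tilde T_{\mu_1+\mu_2}e_\mu$ and $e_\mu T_{\mu_1}\tilde T_{\mu_1+\mu_2}T_{\mu_1}e_\mu$ explicitly in the standard basis. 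This means removing the inverses one at a time via the quadratic relation and absorption into $e_\mu$, and only then invoking uniqueness.

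\emph{Minor points.} $\mu=(\mu_1,\mu_2)$ with $\mu_1,\mu_2>1$ does not have ``a single part different from $1$'', so that is not why \cite{CP23} covers $N=1$. Your ``alternative'' cell argument only reaches the block of $\Hk_{N+1,n}$, not all of $I^\mu_N$.
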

\begin{proof}
In these specific cases, the fact that $X^\mu_N$ generates the ideal $I^{\mu}_N$ was proven in \cite{CP23}, so we only need to prove the equality in the proposition, which is
\begin{equation}\label{eq_lastprop}
e_{\mu}T_{\gamma_{\mu}}C^{\da}_{w_{N+1}}T_{\gamma_{\mu}}^{-1}e_{\mu}=e_{\mu}C^{\da}_{\tilde{w}_{N+1}}e_{\mu}\ .
\end{equation}
Recall from (\ref{Tgammap}) that we can modify $T_{\gamma_{\mu}}$ into some different element $T_{\gamma'_{\mu}}$ without changing the left hand side of (\ref{eq_lastprop}). We will use this freedom below.

$\bullet$ Let $N\geq 1$ and $\mu=(\mu_1,1,1,\dots,1)$. Here we choose to work with the following element $T_{\gamma'_{\mu}}$: 
\begin{center}
 \begin{tikzpicture}[scale=0.3]
\node at (-2,0) {$T_{\gamma'_{\mu}}=$};
\draw (1.8,5.5) -- (1.8,6) -- (10.2,6) -- (10.2,5.5);\node at (6,7) {\scriptsize{$\mu_1$}};
\fill (2,5) circle (0.2);\fill (2,-5) circle (0.2);\node at (2,-6) {\scriptsize{$1$}};
\fill (3,5) circle (0.2);\fill (3,-5) circle (0.2);\node at (3,-6) {\scriptsize{$2$}};
\node at (6.5,-5) {$\dots$};
\fill (10,-5) circle (0.2);\node at (10,-6) {\scriptsize{$N+1$}};

\fill (8,5)  circle (0.2);
\fill (9,5)  circle (0.2);
\fill (10,5) circle (0.2);
\node at (13.5,5) {$\dots$};
\fill (18,5) circle (0.2);
\node at (18,6) {\scriptsize{$\mu_1+N$}};

\fill (11,-5) circle (0.2);
\fill (11,5) circle (0.2);\fill (12,-5) circle (0.2);
\node at (14,-5) {$\dots$};
\fill (17,-5) circle (0.2);
\fill (18,-5) circle (0.2);

\draw[thick] (10,5)..controls +(0,-4) and +(0,+4) .. (2,-5);
\draw[thick] (11,5)..controls +(0,-4) and +(0,+4) .. (3,-5);
\draw[thick] (18,5)..controls +(0,-4) and +(0,+4) .. (10,-5);

\fill[white] (6.7,0.4) circle (0.3);\fill[white] (6.3,0.1)circle (0.3);
\fill[white] (7.1,0.2) circle (0.3);\fill[white] (6.7,-0.1)circle (0.3);
\fill[white] (9.4,3.35) circle (0.35);\fill[white] (9.1,2.8)circle (0.35);
\fill[white] (9.9,2.8) circle (0.35);\fill[white] (9.6,2.3)circle (0.35);
\fill[white] (10.8,-2.7) circle (0.35);\fill[white] (10.5,-3.2)circle (0.35);

\fill[white] (13.6,-0.1) circle (0.3);\fill[white] (13.2,-0.4)circle (0.35);

\draw[thick] (2,5)..controls +(0,-4) and +(0,+4) .. (11,-5);
\draw[thick] (3,5)..controls +(0,-4) and +(0,+4) .. (12,-5);
\draw[thick] (8,5)..controls +(0,-4) and +(0,+4) .. (17,-5);
\draw[thick] (9,5)..controls +(0,-4) and +(0,+4) .. (18,-5);

\node at (25,0) {.};
\end{tikzpicture}
\end{center}
With a formula, we have:
\[T_{\gamma'_{\mu}}=T_{\mu_1-1}\dots T_1\cdot T_{\gamma_{\mu}}=T_{\mu_1-1}\dots T_1\cdot T_{\mu_1}\dots T_2\cdot\ldots\ldots\cdot T_{\mu_1+N-1}\dots T_{N+1}\ .\]
Using several times the braid relations or looking at the braid element above, it is easy to find that:
\[T_{\gamma'_{\mu}}T_i=T_{\mu_1+i-1}T_{\gamma'_{\mu}}\ \ \ \ \ \forall i=1,\dots,N\ .\]
Now, $w_{N+1}$ is the longest permutation of the letters $\{1,\dots,N+1\}$, and in this case $\tilde{w}_{N+1}$ is the longest permutation of the letters $\{\mu_1,\dots,\mu_1+N\}$. Therefore, we have that $C_{\tilde{w}_{N+1}}^\da$ is obtained from $C_{w_{N+1}}^\da$ by replacing the generators $T_1,\dots,T_N$ by, respectively, $T_{\mu_1},\dots,T_{\mu_1+N-1}$. From the preceding calculations, we see that:
\[T_{\gamma'_{\mu}}C_{w_{N+1}}^\da=C_{\tilde{w}_{N+1}}^\da T_{\gamma'_{\mu}}\ ,\]
and this proves (\ref{eq_lastprop}).

$\bullet$ Let $N=1$ and $\mu=(\mu_1,\mu_2)$. Here we modify again the element $T_{\gamma_{\mu}}$ into
\[T_{\gamma'_{\mu}}=T_{\mu_1-1}\dots T_1\cdot T_{\gamma_{\mu}}=T_{\mu_1-1}\dots T_1\cdot T_{\mu_1}\dots T_2\ .\]
The permutations $w_{N+1}$ and $\tilde{w}_{N+1}$ are, respectively, the transpositions $(1,2)$ and $(\mu_1,\mu_1+1)$. Therefore, we have:
\[C^\da_{w_{N+1}}=q-T_1\ \ \ \ \text{and}\ \ \ \ C^\da_{\tilde{w}_{N+1}}=q-T_{\mu_1}\ .\]
Using the braid relations, it is easy to see that $T_{\gamma'_{\mu}}T_1=T_{\mu_1}T_{\gamma'_{\mu}}$ and this proves (\ref{eq_lastprop}) with $T_{\gamma'_{\mu}}$ as required.

$\bullet$ Let $N=2$ and $\mu=(\mu_1,\mu_2,\mu_3)$. This case is more involved. Here we have:
\[T_{\gamma_{\mu}}=T_{\mu_1}\dots T_2\cdot T_{\mu_1+\mu_2}\dots T_3=T_{\mu_1+\mu_2}\dots T_{\mu_1+2}\cdot T_{\mu_1}\dots T_2\cdot T_{\mu_1+1}\dots T_3\ .\]
Again, we are going to work with the modified element
\begin{center}
 \begin{tikzpicture}[scale=0.3]
\node at (-2,0) {$T_{\gamma'_{\mu}}=$};
\draw (0.8,5.5) -- (0.8,6.5) -- (9.4,6.5) -- (9.4,5.5);\node at (5,7.5) {$\mu_1$};
\fill (1,5) circle (0.2);\fill (1,-5) circle (0.2);
\fill (2,5) circle (0.2);\fill (2,-5) circle (0.2);\node at (1,-6) {\scriptsize{$1$}};
\node at (2,-6) {\scriptsize{$2$}};
\fill (3,-5) circle (0.2);\node at (3,-6) {\scriptsize{$3$}};
\node at (5,5) {$\dots$};
\fill (4,-5) circle (0.2);
\fill (5,-5) circle (0.2);\node at (8,-5) {$\dots$};
\fill (8,5)  circle (0.2);
\fill (9,5) circle (0.2);
\draw (9.8,5.5) -- (9.8,6.5) -- (16.2,6.5) -- (16.2,5.5);\node at (13,7.5) {$\mu_2$};
\fill (10,5) circle (0.2);
\fill (11,5) circle (0.2);
\fill (11,-5) circle (0.2);
\node at (13,5) {$\dots$};
\fill (12,-5) circle (0.2);
\node at (14,-5) {$\dots$};
\fill (16,5) circle (0.2);
\draw (16.8,5.5) -- (16.8,6.5) -- (23.2,6.5) -- (23.2,5.5);\node at (20,7.5) {$\mu_3$};
\fill (17,5) circle (0.2);\fill (17,-5) circle (0.2);
\fill (18,5) circle (0.2);
\fill (18,-5) circle (0.2);
\node at (20.5,5) {$\dots$};
\node at (20.5,-5) {$\dots$};
\fill (23,5) circle (0.2);\fill (23,-5) circle (0.2);
\draw[thick] (9,5)..controls +(0,-4) and +(0,+4) .. (1,-5);
\draw[thick] (10,5)..controls +(0,-4) and +(0,+4) .. (2,-5);
\draw[thick] (17,5)..controls +(0,-4) and +(0,+4) .. (3,-5);
\fill[white] (3.8,-3.4)circle (0.3);\fill[white] (4.7,-2.8) circle (0.3);
\fill[white] (3.5,-2.2)circle (0.3);\fill[white] (4.3,-1.5) circle (0.3);
\fill[white] (3.2,-1.5)circle (0.3);\fill[white] (4.0,-0.9) circle (0.3);
\fill[white] (8.2,3)circle (0.3);\fill[white] (8.5,2.3)circle (0.3);
\fill[white] (9.6,-0.2)circle (0.3);\fill[white] (11.4,0.5)circle (0.3);
\fill[white] (16,3.3)circle (0.3);
\draw[thick] (1,5)..controls +(0,-4) and +(0,+4) .. (4,-5);
\draw[thick] (2,5)..controls +(0,-4) and +(0,+4) .. (5,-5);
\draw[thick] (8,5)..controls +(0,-4) and +(0,+4) .. (11,-5);
\draw[thick] (11,5)..controls +(0,-4) and +(0,+4) .. (12,-5);
\draw[thick] (16,5)..controls +(0,-4) and +(0,+4) .. (17,-5);
\draw[thick] (18,5)..controls +(0,-4) and +(0,+4) .. (18,-5);
\draw[thick] (23,5)..controls +(0,-4) and +(0,+4) .. (23,-5);
\end{tikzpicture}
\end{center}
The formula is:
\[T_{\gamma'_{\mu}}=T_{\mu_1-1}\dots T_1\cdot T_{\gamma_{\mu}}=T_{\mu_1+\mu_2}\dots T_{\mu_1+2}\cdot T_{\mu_1-1}\dots T_1\cdot T_{\mu_1}\dots T_2\cdot T_{\mu_1+1}\dots T_3\ .\]
Using the braid relations, we find that this element satisfies:
\[
\begin{array}{rcl}
T_{\gamma'_{\mu}}T_1 & = & T_{\mu_1} T_{\gamma'_{\mu}}\ ,\\[0.4em]
T_{\gamma'_{\mu}}T_2 & = & \tilde{T}_{\mu_1+\mu_2}T_{\gamma'_{\mu}}\,,\ \ \ \ \text{with $\tilde{T}_{\mu_1+\mu_2}=T_{\mu_1+1}^{-1}\dots T_{\mu_1+\mu_2-1}^{-1}T_{\mu_1+\mu_2}T_{\mu_1+\mu_2-1}\dots T_{\mu_1+1}$\ .}
\end{array}
\]
The element $C^\da_{w_{N+1}}$ being the $q$-antisymmetriser made out of the generators $T_1,T_2$, we have:
\begin{equation}\label{proof_beforeP}T_{\gamma'_{\mu}}C^\da_{w_{N+1}}T_{\gamma'_{\mu}}^{-1}=q^3-q^2(T_{\mu_1}+\tilde{T}_{\mu_1+\mu_2})+q (T_{\mu_1}\tilde{T}_{\mu_1+\mu_2}+\tilde{T}_{\mu_1+\mu_2}T_{\mu_1})-T_{\mu_1}\tilde{T}_{\mu_1+\mu_2}T_{\mu_1}\ .
\end{equation}
It remains to multiply on both sides by the idempotent $e_{\mu}$. Recall that $T_ie_{\mu}=e_{\mu}T_i=q e_{\mu}$ for $i\in\{\mu_1,\dots,\mu_1+\mu_2-1\}$. We have the following formulas:
\begin{equation}\label{proof_formulasP}
\begin{array}{l}
e_{\mu}\tilde{T}_{\mu_1+\mu_2}e_{\mu}=e_{\mu}T_{\mu_1+\mu_2}e_{\mu}\ ,\\[0.4em]
e_{\mu}\tilde{T}_{\mu_1+\mu_2}T_{\mu_1}e_{\mu}=q^{1-\mu_2}e_{\mu}T_{\mu_1+\mu_2}\dots T_{\mu_1}e_{\mu}\ ,\\[0.4em]
e_{\mu}T_{\mu_1}\tilde{T}_{\mu_1+\mu_2}e_{\mu}=q^{\mu_2-1}e_{\mu}T_{\mu_1}\dots T_{\mu_1+\mu_2} e_{\mu}-q^{\mu_2-1}(q^{\mu_2-1}-q^{1-\mu_2})e_{\mu}T_{\mu_1}T_{\mu_1+\mu_2} e_{\mu}\ ,\\[0.4em]
e_{\mu}T_{\mu_1}\tilde{T}_{\mu_1+\mu_2}T_{\mu_1}e_{\mu}=e_{\mu}T_{\mu_1}\dots T_{\mu_1+\mu_2}\dots T_{\mu_1} e_{\mu}-q(q^{\mu_2-1}-q^{1-\mu_2})e_{\mu}T_{\mu_1+\mu_2}\dots T_{\mu_1} e_{\mu}\ .
\end{array}
\end{equation}
We will explain how to check them at the end of the proof, and for now we use them in (\ref{proof_beforeP}) and we conclude that:
\begin{multline}
e_{\mu}T_{\gamma'_{\mu}}C^\da_{w_{N+1}}T_{\gamma'_{\mu}}^{-1}e_{\mu}=e_{\mu}\Bigl(q^3-q^2(T_{\mu_1}+T_{\mu_1+\mu_2})+q^{2-\mu_2}T_{\mu_1+\mu_2}\dots T_{\mu_1}+q^{\mu_2}T_{\mu_1}\dots T_{\mu_1+\mu_2}\\
-q^{\mu_2}(q^{\mu_2-1}-q^{1-\mu_2})T_{\mu_1}T_{\mu_1+\mu_2}-T_{\mu_1}\dots T_{\mu_1+\mu_2}\dots T_{\mu_1}+q(q^{\mu_2-1}-q^{1-\mu_2})T_{\mu_1+\mu_2}\dots T_{\mu_1}\Bigr)e_{\mu} \\
=e_{\mu}\Bigl(q^3-q^2(T_{\mu_1}+T_{\mu_1+\mu_2})+q^{\mu_2}(T_{\mu_1+\mu_2}\dots T_{\mu_1}+T_{\mu_1}\dots T_{\mu_1+\mu_2})\\
-(q^{2\mu_2-1}-q^{1})T_{\mu_1}T_{\mu_1+\mu_2}-T_{\mu_1}\dots T_{\mu_1+\mu_2}\dots T_{\mu_1}\Bigr)e_{\mu}\ .
\end{multline}
Note that in the final result, apart from the coefficient $(-1)$ in front of the longest element, the coefficients are all in $q\mathbb{Z}[q]$. To conclude the proof, we remark that the permutation $\tilde{w}_{N+1}$ is the transposition of the letters $\mu_1$ and $\mu_1+\mu_2+1$ and therefore:
\[T_{\tilde{w}_{N+1}}=T_{\mu_1}\dots T_{\mu_1+\mu_2}\dots T_{\mu_1}\ .\]
So the above calculation resulted in
\[e_{\mu}T_{\gamma'_{\mu}}C^\da_{w_{N+1}}T_{\gamma'_{\mu}}^{-1}e_{\mu}=(-1)^{\ell(\tilde{w}_{N+1})}e_{\mu}T_{\tilde{w}_{N+1}}e_{\mu}+\sum_{x<\tilde{w}_{N+1}}\alpha_{x}e_{\mu}T_{x}e_{\mu}\ \ \text{with $\alpha_{x}\in q\mathbb{Z}[q]$.}\]
By the unicity property of the Kazhdan--Lusztig basis proved in Proposition \ref{prop-CT-par2}, we conclude that this element is indeed $e_{\mu}C^\da_{\tilde{w}_{N+1}}e_{\mu}$ as was required.

To finish the proof, we briefly indicate how to find formulas (\ref{proof_formulasP}). The first two formulas are immediate. For the third one, we write
\[e_{\mu}T_{\mu_1}\tilde{T}_{\mu_1+\mu_2}e_{\mu}=q^{\mu_2-1}e_{\mu}T_{\mu_1}T_{\mu_1+1}^{-1}\dots T_{\mu_1+\mu_2-1}^{-1}T_{\mu_1+\mu_2} e_{\mu}\ ,\]
and we remove from left to right the inverses, using the following relations
\[e_{\mu}T_{\mu_1}\underbrace{T_{\mu_1+1}\dots }_{\text{$i-1$ terms}}(T_{\mu_1+i}^{-1}-T_{\mu_1+i})\underbrace{\dots T_{\mu_1+\mu_2-1}^{-1}}_{\text{$\mu_2-i-1$ terms}}T_{\mu_1+\mu_2} e_{\mu}=-(q-q^{-1})q^{-\mu_2+2i}e_{\mu}T_{\mu_1}T_{\mu_1+\mu_2} e_{\mu}\ .\]
This relies on $T_{i}^{-1}=T_{i}+(q-q^{-1})$ and the absorption property of the idempotent $e_{\mu}$. Indeed the $i-1$ underbraced factors commute to hit the idempotent on the right and produce positive powers of $q$. The remaining $\mu_2-i-1$ factors commute and hit the idempotent on the left producing negative powers of $q$. The sum over $i\in\{1,\dots,\mu_2-1\}$ concludes the verification using $(q-q^{-1})(q^{-\mu_2+2}+\dots+q^{\mu_2-2})=q^{\mu_2-1}-q^{1-\mu_2}$.

For the last formulas in (\ref{proof_formulasP}), a similar reasoning works. We write
\[e_{\mu}T_{\mu_1}\tilde{T}_{\mu_1+\mu_2}T_{\mu_1}e_{\mu}=q^{\mu_2-1}e_{\mu}T_{\mu_1}T_{\mu_1+1}^{-1}\dots T_{\mu_1+\mu_2-1}^{-1}T_{\mu_1+\mu_2}\dots T_{\mu_1} e_{\mu}\ ,\]
and we remove from left to right the inverses, using the following relations
\[e_{\mu}\underbrace{T_{\mu_1}\dots }_{\text{$i$ terms}}(T_{\mu_1+i}^{-1}-T_{\mu_1+i})\underbrace{\dots T_{\mu_1+\mu_2-1}^{-1}}_{\text{$\mu_2-i-1$ terms}}T_{\mu_1+\mu_2}\dots T_{\mu_1}  e_{\mu}=-(q-q^{-1})q^{-\mu_2+2i+1}e_{\mu}T_{\mu_1+\mu_2}\dots T_{\mu_1} e_{\mu}\ .\]
This time, using the braid relations, the $i$ factors on the left commute to the right and have their indices increased by $1$. They hit the idempotent $e_{\mu}$ and produce positive powers of $q$. The remaining $\mu_2-i-1$ terms commute and hit the idempotent on the left producing negative powers of $q$. The sum over $i\in\{1,\dots,\mu_2-1\}$ concludes the verification in the same way as before.
\end{proof}

\end{document}